\documentclass[12pt,letterpaper]{amsart}

\usepackage{graphicx}
\usepackage[utf8]{inputenc}
\usepackage{amssymb}
\usepackage{epstopdf}
\usepackage{amsmath}
\usepackage{xcolor}
\usepackage{mathtools}
\usepackage{color}
\usepackage{mathrsfs}
\usepackage{empheq}

\newtheorem{theorem}{Theorem}[section]
\newtheorem{lemma}[theorem]{Lemma}
\newtheorem{prop}[theorem]{Proposition}

\newtheorem{assumption}[theorem]{Assumption}

\newtheorem{remark}[theorem]{Remark}
\newtheorem{example}[theorem]{Example}

\numberwithin{equation}{section}

\def\ifl{\iffalse}

\def\R{{\mathbb R}}

\def\bc{\begin{center}}

\def\sgn{\mathrm{sgn}}
\def\ec{\end{center}}
\def\be{\begin{equation}}
\def\ee{\end{equation}}
\def\ba{\begin{array}}
\def\ea{\end{array}}
\def\bea{\begin{eqnarray}}
\def\eea{\end{eqnarray}}
\def\beaa{\begin{eqnarray*}}
\def\eeaa{\end{eqnarray*}}

\begin{document}
\title[Fronts generated by surface pressure]
{Global bifurcation of  water-wave fronts generated by surface pressure}

\author[]{Jifeng Chu$^{1}$ \quad Zihao Wang$^2$ \quad Yong Zhang $^3$}

\address{$^1$  School of Mathematical, Hangzhou Normal University, Hangzhou 311121, China}
\address{$^2$  Department of Mathematics, Shanghai Normal University, Shanghai 200234, China}
\address{$^3$  School of Mathematical Sciences, Jiangsu University, Zhenjiang 212013, China}

\email{jifengchu@126.com (J. Chu)}
\email{wongtzuhao@163.com (Z. Wang)}
\email{18842629891@163.com (Y. Zhang)}

\thanks{Jifeng Chu was supported by the National Natural Science Foundation of China (Grant
No. 12571168), the Science and Technology Innovation Plan of Shanghai (Grant No. 23JC1403200) and
Zhejiang Provincial Natural Science Foundation of China (No. LZ26A010006). Yong Zhang was supported by National Natural Science Foundation of China (No. 12301133), the Postdoctoral Science Foundation of China (No. 2023M741441, No. 2024T170353) and Jiangsu Education Department (No.
23KJB110007).}

\subjclass[2000]{Primary 35Q35; 76B15; 76B45.}
\keywords{Water waves; fronts; surface pressure; bifurcation theory.}

\begin{abstract}
We study two-dimensional steady gravity water waves with general vorticity in a homogeneous, inviscid fluid of finite depth, subject to a surface pressure that tends to distinct constants upstream and downstream. Starting from a fixed supercritical shear flow, we construct two global branches of monotone fronts connecting distinct laminar states, with increasing and decreasing profiles corresponding to the two signs of the pressure transition. Small-amplitude fronts are first obtained via the implicit function theorem and then extended by analytic global continuation.
\end{abstract}

\maketitle

\section{Introduction}

We study two-dimensional steady gravity water waves with vorticity generated by a prescribed variation in surface pressure. The fluid is homogeneous, inviscid, and incompressible, and occupies a region of finite depth above a horizontal bed. Our interest is in monotone fronts, whose profiles approach distinct parallel shear flows upstream and downstream. Unlike solitary waves, these solutions describe a persistent change in the far-field state, sustained here by a pressure distribution with unequal limits at spatial infinity. We construct small-amplitude fronts and continue them globally.

A pioneering development in the modern theory of rotational water waves was the construction of large-amplitude periodic waves by Constantin and Strauss~\cite{constantincpam04}. Using the Dubreil--Jacotin transformation~\cite{jacotijmpa}, they formulated the non-stagnant problem as a quasilinear elliptic boundary value problem for a height function on a fixed strip. This formulation brought elliptic estimates and global bifurcation theory into a common framework for waves with prescribed vorticity. Constantin and Varvaruca~\cite{constantinarma11} subsequently used conformal coordinates to construct small-amplitude periodic waves with constant vorticity while permitting stagnation. {Constantin, Strauss, and Varvaruca~\cite{constantinacta} constructed the corresponding large-amplitude periodic waves using analytic global bifurcation theory~\cite{BuffoniToland}. Wahl\'en and Weber~\cite{wahlenduke} extended this approach to general vorticity, allowing critical layers and overhanging profiles.}
Although developed for periodic waves, these works introduced analytical methods that have also proved fundamental in the study of solitary waves and fronts.

For solitary surface waves, Beale~\cite{Beale1977} gave a local existence theory, while Mielke~\cite{mielke86} developed spatial-dynamics methods for steady flows under localized perturbations. In the rotational setting, Hur established small-amplitude existence~\cite{hur08} and symmetry results~\cite{hurmrl}; see also Groves and Wahl\'en~\cite{gw}. Wheeler~\cite{milesjma} subsequently constructed large-amplitude solitary waves with vorticity by global bifurcation. More recently, Haziot and Wheeler~\cite{HaziotWheeler2023} obtained global families for constant vorticity in a conformal formulation allowing overhanging profiles, and the authors~\cite{ChuWangZhang2026} extended this approach to general vorticity distributions. {The latter two constructions combine the center-manifold reduction} without a phase space developed by Chen, Walsh, and Wheeler~\cite{chennonlinearity} with analytic global continuation~\cite{BuffoniToland}. {Complementing these existence results, Kozlov, Lokharu, and Wheeler}~\cite{kozlovarma21} proved the nonexistence of subcritical solitary waves in unidirectional flows, placing an important restriction on the admissible far-field regimes.

{Early rigorous constructions of connecting waves arose in two-fluid systems. Amick and Turner \cite{amick89} and Mielke~\cite{mielkebook} developed small-amplitude theories for internal waves, including solitary waves and bores. Their reductions describe solitary waves as homoclinic connections to a single equilibrium and bores as heteroclinic connections between distinct equilibria. This distinction between returning to one far-field state and connecting two different states remains central when the solutions are continued beyond the small-amplitude regime.}

{For fronts, global continuation presents an additional difficulty:} a connecting profile may separate into transitions whose mutual distance tends to infinity. Chen, Walsh, and Wheeler~\cite{cwwparis,ChenWalshWheelerFronts} developed an analytic global bifurcation theory for monotone fronts of elliptic equations on infinite cylinders and applied it to large-amplitude internal bores. Their theory accounts for this loss of compactness as well as degeneration of the asymptotic linearized operators. Their subsequent work~\cite{chenvonkarman} establishes overturning along the elevation-bore family and identifies overturning or a gravity-current limit along the depression-bore family. These results concern interfaces between two fluids. {For single-phase flows, monotone bores are excluded} in the unidirectional constant-pressure setting; see~\cite{ChenWalshWheeler2018}. External forcing therefore provides a natural setting in which to seek monotone surface fronts.

Moving atmospheric pressure offers a physical motivation for considering such forcing. A concrete example is the pressure jump associated with a convective squall line during the 15 June 2006 Ciutadella rissaga event: Jansa, Monserrat, and Gomis~\cite{JansaMonserratGomis2007} reported the abrupt atmospheric pressure rise and proposed it as the forcing mechanism for the exceptional sea-level oscillations. Liu and Higuera~\cite{liujfm} analyzed wave generation by moving atmospheric disturbances using linear models, with applications to the 2022 Tonga event. Their analysis distinguishes a pressure-locked response, travelling with the disturbance, from freely propagating waves. The locked component motivates the study of steady responses in a moving frame. Both events provide physical motivation for the forcing mechanism; the unequal constant pressure limits and global monotonicity assumed here remain modelling idealizations and do not describe either complete transient event. Mathematical studies of pressure-generated steady waves include Beale~\cite{Beale1980}, Sun\cite{Sun1993}, Tuck~\cite{VandenBroeckTuck1985} and Wheeler~\cite{warma}; related forced-wave and hydraulic-transition problems are considered in~\cite{chaos,Grue2025,kejfm}.

{In the present problem, we prescribe the upstream supercritical shear flow and the shape of the pressure transition, and vary its strength. The downstream state is determined by the \emph{conjugate flow equation}. Here, a \emph{front} is a hydraulic transition between two supercritical shear flows, or equivalently a heteroclinic connection. We construct two global branches, corresponding to positive and negative pressure differences. The positive branch consists of increasing fronts of elevation, and the negative branch consists of decreasing fronts of depression. The main result, stated in Theorem~\ref{thm:main}, gives the following alternatives: the elevation branch either loses uniform unidirectionality or approaches a critical downstream state; the depression branch either loses uniform unidirectionality or has an unbounded negative pressure difference, with downstream depth tending to zero. Loss of uniform unidirectionality means that the horizontal relative velocity approaches zero along a sequence.

Small-amplitude fronts are obtained by the analytic implicit function theorem about the prescribed laminar flow. We then continue the local branches globally. The main analytical issue is to control both far fields along the continuation: local regularity alone does not prevent a transition from drifting to infinity. Monotonicity and a flow-force balance incorporating the variable pressure exclude this loss of compactness. Together with uniform regularity estimates, they yield compactness in a two-ended function space and reduce the global alternatives to the limiting behaviors described above.}

The paper is organized as follows. Section~\ref{sec:formulation} gives the height formulation and the conjugate flow equation and sets up the functional analytic framework. Section~\ref{sec:local-fronts} analyzes the linearized operators and constructs small-amplitude fronts by the analytic implicit function theorem. Section~\ref{mono} proves that monotonicity persists under continuation by means of a quotient form of the maximum principle. Section~\ref{sec:compactness} establishes the flow-force balance, uniform regularity, and compactness. Section~\ref{sec:global-continuation} constructs the global solution curves, characterizes their possible limiting behavior.Section~\ref{sec:global-continuation} constructs the global solution curves and characterizes their possible limiting behavior. Finally, Section~7 provides an explicit constant-vorticity example whose pressure profile satisfies Assumption~\ref{ass:pressure}.

{We conclude the introduction by fixing our notation for H\"older spaces.} Let $\Omega$ be a connected, open, possibly unbounded subset of $\R^n$. {We write $C_c^\infty(\bar\Omega)$ for the smooth functions on $\bar\Omega$ with compact support in $\bar\Omega$.} For {$\alpha\in(0,1)$} {and $k\in\mathbb{N}$, we denote} by $C^{k+\alpha}(\bar\Omega)$ the space of functions whose partial derivatives up to order $k$ are H\"older continuous in $\bar\Omega$ with exponent $\alpha$. We say that $u_n\to u$ in $C_{\mathrm{loc}}^{k+\alpha}(\bar\Omega)$ if $\|\varphi(u_n-u)\|_{C^{k+\alpha}(\Omega)}\to0$ for all $\varphi\in C_{c}^\infty(\bar\Omega)$.
Let $C_b^{k+\alpha}(\bar\Omega)$ be the Banach space of functions $u\in C^{k+\alpha}(\bar\Omega)$ such that $\|u\|_{C^{k+\alpha}(\Omega)}<\infty$.
When $\Omega$ is unbounded, {we denote by $C_0^k(\bar\Omega)\subset C_b^k(\bar\Omega)$ the closed subspace} of functions {whose partial derivatives up to order $k$ vanish uniformly at infinity. Writing $D^j u$ for the collection of partial derivatives of order $j$, with $D^0u=u$, this means}
\begin{equation*}
    C_0^k(\bar\Omega):=\bigg\{u\in C_b^k(\bar\Omega):\lim_{r\to\infty}\sup_{|x|=r}|D^j u(x)|=0\quad\mathrm{for~}0\leq j\leq k\bigg\}.
\end{equation*}
{We abbreviate $C_0^0$ by $C_0$. Throughout, $C$ denotes a positive constant that may change from line to line; its relevant dependencies are stated in each estimate. The two-ended H\"older spaces used for fronts are defined in Section~\ref{sec:formulation}.}


\section{{Height formulation and conjugate flows}}\label{sec:formulation}

{We first formulate the problem in height coordinates using the Dubreil--Jacotin transformation. We then determine the downstream laminar state and express the problem as an analytic equation on two-ended H\"older spaces.}
\subsection{Governing equations} {Let $t$ denote time and $(x,y)$ the horizontal and vertical coordinates. Fix an upstream depth $d>0$, and write $\eta(t,x)$ for the surface elevation relative to this depth. The fluid lies above the flat bed $y=0$ and below the free surface $y=d+\eta(t,x)$. Fix a wave speed $c>0$ and assume that the motion is steady in the frame moving at speed $c$: the surface elevation depends only on $x-ct$, while the velocity field $(u,v)$ and pressure $P$ depend only on $x-ct$ and $y$. Writing $x$ for the horizontal coordinate in this moving frame, we define the fluid domain by}
\begin{equation*}
	\Omega_\eta=\{(x,y):0<y<d+\eta(x)\}.
\end{equation*}
{Let $g>0$ denote gravitational acceleration, and normalize the fluid density to one. The velocity field and pressure satisfy the steady Euler equations}
\begin{subequations}\label{euler}
	\begin{align}
	&(u-c)u_x+vu_y=-P_x\quad&&\mathrm{in~}\Omega_\eta,\label{euler1}\\
	&(u-c)v_x+vv_y=-P_y-g\quad&&\mathrm{in~}\Omega_\eta\label{euler2},
	\end{align}
\end{subequations}
{and the incompressibility condition}
\begin{equation}
	u_x+v_y=0\quad\mathrm{in~}\Omega_\eta\tag{\ref{euler}{c}}\label{euler3},
\end{equation}
	{together with the kinematic boundary conditions}
	\begin{subequations}
		\begin{align}
		&v=0\quad&&\mathrm{on~}y=0\tag{\ref{euler}{d}}\label{euler4},\\
	&v=(u-c)\eta_x&&\mathrm{on~}y=d+\eta(x)\tag{\ref{euler}{e}}\label{euler5},
	\end{align}
	\end{subequations}
	{Let $P_{\mathrm{atm}}$ denote the constant reference atmospheric pressure and $R(x)$ its prescribed variation along the surface. The dynamic boundary condition is}
	\begin{equation}
		P=P_{\mathrm{atm}}+R(x),\quad\mathrm{on~}y=d+\eta(x).\tag{\ref{euler}{f}}\label{euler6}
	\end{equation}
	{Assume that $R$ is monotone and has finite endpoint values, denoted by $R^\pm$, satisfying}
	\begin{equation*}
		\lim_{x\to\pm\infty}R(x)=R^{\pm},
	\end{equation*}
	{where $R^+\ne R^-=0$. Denote the limiting surface elevations by $\eta_\pm$ and the limiting horizontal velocity profiles by $U_\pm$. We seek \emph{front solutions} satisfying}
	\begin{equation}
		\eta\to\eta_{\pm},\quad v\to 0,\quad u\to U_{\pm}(y),\quad \mathrm{as~}x\to\pm\infty,\tag{\ref{euler}{g}}\label{eulerasy}
	\end{equation}
	with $\eta_-\not=\eta_+$ and $U_-(y)\not=U_+(y)$. {The limits as $x\to-\infty$ and $x\to+\infty$ define the \emph{upstream} and \emph{downstream} flows, respectively. We choose the reference level so that $\eta_-=0$; the upstream depth is therefore $d$, and the downstream depth is $d_+=d+\eta_+$.}

	 {Throughout the paper, we impose the unidirectionality condition}
	\begin{equation*}
		u-c<0\quad\mathrm{in~}\Omega_\eta,
	\end{equation*}
	{which excludes stagnation points in the fluid domain. Under this assumption, the} \emph{Froude number} is defined as
	\begin{equation*}
		\frac{1}{F^2}=g\int_{0}^{d}\frac{dy}{(c-U_-(y))^2},
	\end{equation*}
{The upstream flow is called \emph{supercritical} if $F>1$.}

	{We work throughout with a fixed supercritical upstream flow. The condition $F>1$ provides the coercivity used in the linear analysis below; moving-pressure responses in related physical regimes are discussed in~\cite{liujfm}.}
	{For the upstream shear flow, define the Bernoulli parameter $\lambda$ and the total head $Q$ by}
	\begin{equation}\label{upbernoulli}
	\lambda=(c-U_-(d))^2,\quad Q=\lambda+2gd.
	\end{equation}

	{Incompressibility gives a stream function} $\psi$ such that
	\begin{equation*}
		\psi_x=-v,\quad\psi_y=u-c.
	\end{equation*}
	{The kinematic boundary conditions \eqref{euler4} and \eqref{euler5} imply that $\psi$ is constant on the bed and on the free surface. We normalize $\psi$ to be zero on the free surface and write $m>0$ for its value on the bed.}
	In the fluid domain, we define the vorticity as
	\begin{equation*}
		\omega=v_x-u_y,
	\end{equation*}
	{and taking the curl of the Euler equations shows that vorticity is constant along streamlines. Thus there is a vorticity function $\gamma:\R\to\R$ such that~\cite{c-book}}
	\begin{equation*}
		-\Delta\psi=\gamma(\psi)
	\end{equation*}
	and define the integrated vorticity $\Gamma(p):=\int_0^p\gamma(-s)\,ds$ for $p\in[-m,0]$. Bernoulli's law states that the quantity
	\begin{equation*}
		E=\frac{1}{2}|\nabla\psi|^2+gy+P-\Gamma(-\psi)
	\end{equation*}
	is constant throughout the fluid. Combining Bernoulli's law with the dynamic boundary condition gives the following free-boundary problem for the stream function:
	\begin{subequations}\label{streameq}
		\begin{align}
			\Delta\psi&=-\gamma(\psi)\quad&&\mathrm{in~}\Omega_\eta,\label{streameq1}\\
			\psi&=m&&\mathrm{on~}y=0,\label{streameq2}\\
			\psi&=0&&\mathrm{on~}y=d+\eta(x),\label{streameq3}\\
			\frac{1}{2}|\nabla\psi|^2+gy&=\frac{Q}{2}-R&&\mathrm{on~}y=d+\eta(x)\label{streameq4},
		\end{align}
	\end{subequations}
together with the asymptotic condition
	\begin{equation}
		\eta\to\eta_{\pm},\quad\psi_x\to0,\quad\psi_y\to U_{\pm}(y)-c,\quad\mathrm{as~}x\to\pm\infty.\tag{\ref{streameq}{e}}\label{streameq5}
	\end{equation}
	
	{We use the pressure strength as a parameter and prescribe}
	\begin{equation}\label{eq:pressureform}
		 R=\beta r(x),
		\end{equation}
	 {where the normalized profile $r$ is fixed. Its monotonicity, normalization, and decay are specified in the following assumption.}
	\begin{assumption}[Monotone pressure step]\label{ass:pressure}
		The pressure is given by \eqref{eq:pressureform}, where
		\(r\in C_b^{4+\alpha}(\R)\), \(r'(q)>0\), and there are \(C,\vartheta>0\) such that
		\begin{equation*}
			\sum_{j=0}^4|{\partial_q^j r(q)-\partial_q^j r_\pm}|
			\le Ce^{-\vartheta |q|}\quad\text{for }\ \pm q\ge1,
			\qquad\lim_{q\to-\infty}r= r_-=0,\quad \lim_{q\to\infty}r=r_+=1,
		\end{equation*}
		where \(\partial_q^j r_\pm=0\) for \(j\ge1\).
	\end{assumption}

	{The unidirectionality condition allows us to apply the Dubreil--Jacotin transformation. Set}
	\begin{equation*}
		q=x,\quad p=-\psi,
	\end{equation*}
	{Thus $q$ is the horizontal coordinate and $p$ is the negative stream-function coordinate. The transformation maps $\Omega_\eta$ onto the fixed infinite strip}
	\begin{equation*}
		\Omega:=\{(q,p)\in\R^2:-m<p<0\},
	\end{equation*}
	with the top boundary
	\begin{equation*}
	\mathcal	T:=\{(q,p)\in\R^2:p=0\},
	\end{equation*}
	and the bottom boundary
	\begin{equation*}
	\mathcal	B:=\{(q,p)\in\R^2:p=-m\}.
	\end{equation*}
	
	We define the height function $h(q,p)=y$ in $\Omega$. {As shown in} \cite{constantincpam04,constantinstraussarma11,warma}, {the velocity field and surface elevation are recovered from}
	\begin{equation*}
		u=c-\frac{1}{h_p},\quad v=-\frac{h_q}{h_p},\quad \eta(x)=h(q,0)-d.
	\end{equation*}
	Moreover, the height function $h$ solves
	\begin{subequations}\label{heighteq}
		\begin{align}
		\left(-\frac{1+h_q^2}{2h_p^2}+\Gamma\right)_p+\left(\frac{h_q}{h_p}\right)_q& =0\quad&&\mathrm{in~}\Omega,\label{heighteq1}\\
		\frac{1+h_q^2}{2h_p^2}+gh&=\frac{Q}{2}-R\quad&&\mathrm{on~}\mathcal T,\label{heighteq2}\\
		h&=0&&\mathrm{on~}\mathcal B.
		\end{align}
	\end{subequations}
	{Let $H_\pm$ denote the laminar height profiles at the two ends, and let $D$ denote the gradient in $(q,p)$. The asymptotic conditions are}
	\begin{equation*}
		\lim_{q\to\pm\infty}h(q,p)=H_{\pm}(p),\quad\lim_{q\to\pm\infty}Dh(q,p)=DH_{\pm}(p),
	\end{equation*}
	{uniformly in $p$. We write $H:=H_-$ for the upstream profile, which is given by}
	\begin{equation}\label{upflow}
		H(p)=\int_{-m}^p\frac{ds}{\sqrt{\lambda+2\Gamma(s)}},
	\end{equation}
	with\begin{equation*}
		\lambda>-2\min_{[-m,0]}\Gamma.
	\end{equation*}	
	{Writing $H(p;\lambda)$ when we wish to display the parameter dependence, \eqref{upbernoulli} gives}
	\begin{equation*}
	\lambda=\frac{1}{H_p^2(0;\lambda)}={(c-U_-(d))^2}.
\end{equation*}
	{For a general laminar parameter $\mu>-2\min\Gamma$, let $H(p;\mu)$ be the profile in \eqref{upflow} with $\lambda$ replaced by $\mu$. Define its depth function $\mathcal D$ and Bernoulli function $\mathcal Q$ by}
	\begin{equation*}
	\mathcal{D}(\mu):=\int_{-m}^0\frac{d p}{\sqrt{\mu+2\Gamma(p)}},
		\qquad
		\mathcal{Q}(\mu):=\frac\mu2+g\mathcal D(\mu),
	\end{equation*}
	{and the corresponding Froude number by}
	\begin{equation}\label{eq:F-mu}
		F^{-2}(\mu):=g\int_{-m}^0\frac{d p}{(\mu+2\Gamma(p))^{3/2}}=g\int_{-m}^0 H_p^3(p;\mu)dp.
	\end{equation}
	{Direct differentiation gives}
	\begin{equation}\label{eq:B-derivative}
	\mathcal{Q}'(\mu)=\frac12\bigl(1-F^{-2}(\mu)\bigr),
		\qquad
		\mathcal{D}'(\mu)=-\frac1{2gF^2(\mu)}.
	\end{equation}
	{The function $F^{-2}(\mu)$ is strictly decreasing on $(-2\min_{[-m,0]}\Gamma,\infty)$, with $F^{-2}(\mu)\to\infty$ as $\mu\downarrow-2\min\Gamma$ and $F^{-2}(\mu)\to0$ as $\mu\to\infty$. Hence there is a unique $\lambda_{\rm cr}$ such that}
	\begin{equation*}
		F(\lambda_{\rm cr})=1.
	\end{equation*}
{The upstream flow is supercritical precisely when $\lambda>\lambda_{\rm cr}$. Let $\lambda_+$ denote the Bernoulli parameter of the downstream laminar flow. Passing to the two limits in the surface Bernoulli condition and using conservation of the total head $Q$, we obtain}
	\begin{equation*}
	\mathcal{Q}(\lambda)=Q/2,\quad \mathcal{Q}(\lambda_+)+\beta=Q/2.
	\end{equation*}
	{Eliminating $Q$ gives the \emph{conjugate flow equation}}
	\begin{equation*}
		\mathcal{Q}(\lambda_+)+\beta=\mathcal{Q}(\lambda).
	\end{equation*}
	{By \eqref{eq:B-derivative}, $\mathcal Q'(\mu)>0$ for $\mu>\lambda_{\rm cr}$. This monotonicity determines the supercritical downstream state uniquely.}
	\begin{lemma}[{The conjugate downstream state}]\label{lem:downstream-state}
		{Define the critical pressure strength by}
		\[\beta_{\rm cr}=\mathcal Q(\lambda)-\mathcal{Q}(\lambda_{\rm cr}).\]
		{For every $\beta<\beta_{\rm cr}$, there is a unique}
		\(\lambda_+(\beta)\in(\lambda_{\rm cr},\infty)\) such that
		\begin{equation}\label{eq:conjugate}
			\mathcal{Q}(\lambda_+(\beta))+\beta=\mathcal{Q}(\lambda).
		\end{equation}
		{It depends real-analytically on $\beta$. Denote the corresponding downstream depth and Froude number by $d_+(\beta)$ and $F_+(\beta)$, respectively; then}
		\begin{equation*}
			d_+(\beta)=\mathcal D(\lambda_+(\beta)),\qquad
			F_+(\beta)=F(\lambda_+(\beta)).
		\end{equation*}
		Moreover,
		\begin{equation}\label{eq:depth-derivative}
			d_+'(\beta)=\frac{1}{g(F_+^2(\beta)-1)}>0,
			\qquad d_+'(0)=\frac1{g(F^2-1)}.
		\end{equation}
		As \(\beta\uparrow\beta_{\rm cr}\), \(F_+(\beta)\downarrow1\); as
		\(\beta\to-\infty\), \(\lambda_+(\beta)\to\infty\) and \(d_+(\beta)\to0\).
	\end{lemma}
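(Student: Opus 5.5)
The plan is to treat \eqref{eq:conjugate} as inverting the scalar function $\mathcal Q$ on the supercritical interval $(\lambda_{\rm cr},\infty)$. By \eqref{eq:B-derivative} and the strict decrease of $F^{-2}$, we have $\mathcal Q'(\mu)=\tfrac12(1-F^{-2}(\mu))>0$ for $\mu>\lambda_{\rm cr}$. Hence $\mathcal Q$ is strictly increasing and continuous on $[\lambda_{\rm cr},\infty)$. Moreover $\mathcal Q(\mu)\ge \mu/2\to\infty$ as $\mu\to\infty$, since $\mathcal D>0$. So $\mathcal Q$ maps $(\lambda_{\rm cr},\infty)$ bijectively onto $(\mathcal Q(\lambda_{\rm cr}),\infty)$. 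The condition $\beta<\beta_{\rm cr}$ is exactly $\mathcal Q(\lambda)-\beta>\mathcal Q(\lambda_{\rm cr})$, which gives a unique $\lambda_+(\beta)=\mathcal Q^{-1}(\mathcal Q(\lambda)-\beta)$ in $(\lambda_{\rm cr},\infty)$. Note that $\lambda_+(0)=\lambda$ because $\lambda>\lambda_{\rm cr}$.

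For analyticity, I first note that $\mu\mapsto \mathcal D(\mu)$ and $F^{-2}(\mu)$ are real-analytic on $(-2\min\Gamma,\infty)$. The integrands $(\mu+2\Gamma(p))^{-1/2}$ and $(\mu+2\Gamma(p))^{-3/2}$ extend holomorphically in $\mu$ to a complex neighbourhood of each real point, uniformly in $p\in[-m,0]$, because $\mu+2\Gamma$ stays bounded away from zero. Differentiation under the integral then gives analyticity. Since $\mathcal Q'\neq0$ on the supercritical interval, the analytic inverse function theorem shows that $\mathcal Q^{-1}$ is analytic there. Composing with the affine map $\beta\mapsto\mathcal Q(\lambda)-\beta$ makes $\lambda_+$ analytic, and so are $d_+=\mathcal D\circ\lambda_+$ and $F_+=F\circ\lambda_+$.

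For the derivative, I differentiate \eqref{eq:conjugate} to get $\mathcal Q'(\lambda_+)\lambda_+'=-1$, so $\lambda_+'=-2F_+^2/(F_+^2-1)$. Then by \eqref{eq:B-derivative},
\[
d_+'=\mathcal D'(\lambda_+)\lambda_+'=\frac{-1}{2gF_+^2}\cdot\frac{-2F_+^2}{F_+^2-1}=\frac{1}{g(F_+^2-1)},
\]
which is positive because $F_+>1$ on the supercritical side. Evaluating at $\beta=0$, where $\lambda_+(0)=\lambda$ and $F_+(0)=F$, gives the second formula in \eqref{eq:depth-derivative}.

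For the limits, $\lambda_+'<0$, so $\lambda_+$ is decreasing. As $\beta\uparrow\beta_{\rm cr}$ we have $\mathcal Q(\lambda_+)\downarrow\mathcal Q(\lambda_{\rm cr})$, and continuity of $\mathcal Q^{-1}$ up to the endpoint gives $\lambda_+\downarrow\lambda_{\rm cr}$. Since $F^{-2}$ is continuous and decreasing, $F_+\downarrow F(\lambda_{\rm cr})=1$. As $\beta\to-\infty$ we have $\mathcal Q(\lambda_+)\to\infty$, which forces $\lambda_+\to\infty$ because $\mathcal Q$ is finite on bounded subsets of the interval. Finally $d_+=\mathcal D(\lambda_+)\le m/\sqrt{\lambda_+ + 2\min\Gamma}\to0$.

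The only mildly delicate point is the analyticity of $\mathcal D$ and $F^{-2}$ in $\mu$, specifically the uniform complex extension of the integrands. Everything else is one-variable calculus.
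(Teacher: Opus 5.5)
Your proposal is correct and follows essentially the same route as the paper. Both invert the strictly increasing map $\mathcal Q$ on $(\lambda_{\rm cr},\infty)$, get analyticity from the analytic implicit/inverse function theorem, and differentiate \eqref{eq:conjugate} together with \eqref{eq:B-derivative} to obtain $d_+'$. Your version also supplies details the paper leaves implicit: the holomorphic extension of the integrands defining $\mathcal D$ and $F^{-2}$, the bound $\mathcal Q(\mu)\ge\mu/2$, and the estimate $d_+\le m/\sqrt{\lambda_++2\min\Gamma}$.
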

	\begin{proof}
		{On $(\lambda_{\rm cr},\infty)$, the function $\mathcal Q$ is strictly increasing and tends to infinity. Thus \eqref{eq:conjugate} has a unique solution for each $\beta<\beta_{\rm cr}$, and the analytic implicit function theorem gives its real-analytic dependence on $\beta$.}  Differentiating
		\eqref{eq:conjugate} and using \eqref{eq:B-derivative} gives
		\[
		\lambda_+'(\beta)=-\frac{2}{1-F_+^{-2}(\beta)}.
		\]
		Combining this identity with the formula for \(\mathcal D'\) proves
		\eqref{eq:depth-derivative}.  The two limiting statements follow from monotonicity
		and the definitions of \(\mathcal Q\), \(F\), and \(\mathcal D\).
	\end{proof}
	Notice that \eqref{eq:conjugate} is equivalent to the endpoint Bernoulli identity
	\begin{equation}\label{eq:endpoint-lambda}
		\lambda_+(\beta)
		=\lambda+2g\bigl(d-d_+(\beta)\bigr)-2\beta.
	\end{equation}
	Also, the downstream Froude number is
	\begin{equation*}
		\frac1{F_+^2(\beta)}
		=g\int_0^{d_+(\beta)}\frac{d y}{(c-U_+(y))^2}
		=g\int_{-m}^0H_p(p;\lambda_+(\beta))^3d p.
	\end{equation*}

	\subsection{{Functional analytic formulation}}Choose a smooth cutoff \(\chi_+\in C^\infty(\R)\) with
	\begin{equation*}
		\chi_+(q)=\begin{cases}
			0\quad&\mathrm{on~}(-\infty,-1],\\
			1&\mathrm{on~}[1,\infty),
		\end{cases}
	\end{equation*}
	and put \(\chi_-=1-\chi_+\).  For an integer
	\(k\ge0\), let
	\[
	\begin{aligned}
		C_0^k(\overline\Omega):=\bigl\{u\in C_b^k(\overline\Omega):{}&
		\partial_q^i	\partial_p^j u(q,p)\to0\ \text{uniformly in }p~\text{as }q\to\pm\infty,\quad i+j\le k\bigr\}.
	\end{aligned}
	\]
	Define the two-ended H\"older space by
	\begin{equation}\label{eq:two-ended-space}
		\begin{aligned}
			C_\infty^{k+\alpha}(\overline\Omega):=
			\bigl\{\chi_-u_-+\chi_+u_++u_0:{}&
			u_\pm\in C^{k+\alpha}([-m,0]),~u_0\in C_b^{k+\alpha}(\overline\Omega)
			\cap C_0^k(\overline\Omega)\bigr\}.
		\end{aligned}
	\end{equation}
	The endpoint profiles are unique, and the sum of their norms and the
	\(C_b^{k+\alpha}\)-norm of the remainder makes
	\eqref{eq:two-ended-space} a Banach space.  The analogous
	definition uses scalar endpoint limits  on \(\mathcal T\).  Different choices of \(\chi_+\) will give
	equivalent norms.
	This norm is equivalent to the inherited \(C_b^{k+\alpha}\)-norm:
	each endpoint norm is bounded by \(\|u\|_{C_b^{k+\alpha}}\), and the fixed
	cutoffs control the norm of the remainder. The same holds on \(\mathcal T\).
	
	Set
	\begin{align*}
		\mathscr X_\infty&:=\{w\in C_\infty^{3+\alpha}(\overline\Omega):w=0\text{ on }\mathcal B\},\\
		\mathscr Y_\infty&:=C_\infty^{1+\alpha}(\overline\Omega)
		\times C_\infty^{2+\alpha}(\mathcal T).
	\end{align*}
	and
	\begin{align*}
		\mathscr X_b&:=\{w\in C_b^{3+\alpha}(\overline\Omega):w=0\text{ on }\mathcal B\},\\
		\mathscr Y_b&:=C_b^{1+\alpha}(\overline\Omega)
		\times C_b^{2+\alpha}(\mathcal T).
	\end{align*}
	{We write $\mathbf n=(0,1)$ for the outward unit normal to $\mathcal T$.}

	{Writing $h=H(p)+w(q,p)$, we obtain the following problem for the height perturbation:}
	\begin{equation}\label{wpde}
		\begin{aligned}
			&\left( -\frac{1 + w_q^2}{2(H_{p} + w_p)^2} + \Gamma \right)_p + \left( \frac{w_q}{H_{p} + w_p} \right)_q=0\quad&&\mathrm{in~}\Omega,\\
			&\left( \frac{1 + w_q^2}{2(H_{p} + w_p)^2} + gw - \frac{\lambda}{2} + R(q, \beta) \right)=0&&{\mathrm{on~}\mathcal T},\\
			&w=0&&{\mathrm{on~}\mathcal B},
		\end{aligned}
	\end{equation}
	{For $w\in\mathscr X_\infty$, let $w_\pm$ denote its endpoint perturbations. The prescribed endpoints satisfy}
$$\lim_{q\to\infty}w(q,p)=w_+=H(\cdot;\lambda_+(\beta))-H,\quad\mathrm{and~}\lim_{q\to-\infty}w=0$$
	uniformly in $p\in[-m,0]$.

	{To construct small-amplitude fronts, we formulate the problem on the open set}
	\begin{equation}\label{eq:U-open}
		\mathscr{U}:=\{(\beta,w)\in\R\times\mathscr X_\infty:\beta<\beta_{\rm cr},\quad \inf_\Omega(H_p+w_p)>0\}.
	\end{equation}
	{We write the problem as the nonlinear equation $\mathcal F(\beta,w)=0$, where}
	\begin{equation*}
		\mathcal{F}=(\mathcal{F}_1,\mathcal{F}_2):\mathscr{U}\to \mathscr Y_\infty,
	\end{equation*}
	where
	\begin{equation}\label{eq:nonlinear-operator}
		\begin{aligned}
	&\mathcal{F}_1(w)=\left( -\frac{1 + w_q^2}{2(H_{p} + w_p)^2} + \Gamma \right)_p + \left( \frac{w_q}{H_{p} + w_p} \right)_q,\\
	&\mathcal{F}_2(\beta,w)=\left( \frac{1 + w_q^2}{2(H_{p} + w_p)^2} + gw - \frac{\lambda}{2} + R(q, \beta) \right) \bigg|_\mathcal T.
		\end{aligned}
	\end{equation}
\begin{lemma}[Analytic formulation]\label{lemma:analytic-map}
	The mapping
	\[
	\mathcal F=(\mathcal F_1,\mathcal F_2):\mathscr U\longrightarrow\mathscr {Y}_\infty
	\]
	is real analytic.  Its zero set is in one-to-one correspondence with classical
	solutions of \eqref{heighteq} having two laminar limits. {For such a solution, denote the endpoint height profiles by $K_\pm:=H+w_\pm$.} If a zero belongs to the
	connected component of \((0,0)\) on which both limiting states are supercritical,
	then
	\begin{equation}\label{eq:endpoint-profiles}
		{K_-(p)}=H(p;\lambda),\qquad
		{K_+(p)}=H(p;\lambda_+(\beta)).
	\end{equation}
\end{lemma}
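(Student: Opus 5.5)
The proof has three parts: analyticity of $\mathcal F$, the correspondence between zeros and classical solutions of \eqref{heighteq}, and the identification \eqref{eq:endpoint-profiles} of the endpoint profiles.

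\textbf{Analyticity.} On $\mathscr U$ we have $\inf(H_p+w_p)>0$. The maps $w\mapsto w_q, w_p$ are bounded linear from $C_\infty^{3+\alpha}(\overline\Omega)$ into $C_\infty^{2+\alpha}(\overline\Omega)$, and $C_\infty^{k+\alpha}$ is a Banach algebra. The algebra property can be checked on the decomposition $\chi_-u_-+\chi_+u_++u_0$: products of endpoint parts give endpoint parts plus a compactly supported error, and every other product lands in $C_b^{k+\alpha}\cap C_0^k$. On the open set where $\inf f>0$, the map $f\mapsto f^{-1}$ is analytic, by a Neumann series about any base point. Hence $\mathcal F_1$ and $\mathcal F_2$ are compositions of analytic maps with one derivative and a trace, so they land in $\mathscr Y_\infty$.

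The pressure term $\beta r$ is linear in $\beta$. It lies in $C_\infty^{2+\alpha}(\mathcal T)$ because $r-\chi_+ r_+$ decays exponentially by Assumption~\ref{ass:pressure}. The term $\Gamma(p)_p=\gamma(-p)$ is independent of $q$ and belongs to the endpoint part.

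\textbf{Correspondence with classical solutions.} If $\mathcal F(\beta,w)=0$, then $h=H+w$ solves \eqref{heighteq} with $h_p>0$, and the Dubreil--Jacotin inverse gives $(u,v,\eta)$ as in \cite{constantincpam04}. Membership in $C^{3+\alpha}_\infty$ supplies the laminar limits together with their derivatives. Conversely, a classical solution with two laminar limits and $h_p>0$ defines $w=h-H\in\mathscr X_\infty$ under the regularity class built into the definition of a front.

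\textbf{Endpoint profiles.} Pass to the limit $q\to\pm\infty$ in $\mathcal F=0$, using uniform convergence of $w$ and its derivatives to $w_\pm$ with vanishing $q$-derivatives. The endpoint heights $K_\pm=H+w_\pm$ then satisfy the ODE
\[
\Big(-\frac1{2K_p^2}+\Gamma\Big)_p=0,\qquad K(-m)=0,\qquad \frac1{2K_p^2(0)}+gK(0)=\frac Q2-\beta r_\pm .
\]
The first equation gives $K_p^{-2}=\mu+2\Gamma$ for some constant $\mu$, so $K=H(\cdot;\mu)$. The surface condition becomes $\mathcal Q(\mu)=\mathcal Q(\lambda)-\beta r_\pm$. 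At $-\infty$, where $r_-=0$, this reads $\mathcal Q(\mu_-)=\mathcal Q(\lambda)$; at $+\infty$, where $r_+=1$, it is the conjugate equation \eqref{eq:conjugate}.

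Each of these equations may have a second root, a subcritical one with $\mu<\lambda_{\rm cr}$. This is where the connected-component hypothesis enters. The map $(\beta,w)\mapsto(w_-,w_+)$ is continuous, and so are the endpoint parameters $\mu_\pm$, computed from $\mu=K_p(0)^{-2}-2\Gamma(0)$. On the component, both endpoint states are supercritical by assumption, so $\mu_\pm>\lambda_{\rm cr}$. By Lemma~\ref{lem:downstream-state}, $\mathcal Q$ is strictly increasing on $(\lambda_{\rm cr},\infty)$, so the supercritical root is unique. Therefore $\mu_-=\lambda$ and $\mu_+=\lambda_+(\beta)$, which is \eqref{eq:endpoint-profiles}.

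I expect the main obstacle to be the Banach-algebra and composition properties of the two-ended spaces, which must be verified carefully. The ODE identification of the limits is routine.
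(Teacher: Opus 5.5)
Your proposal is correct and follows essentially the same route as the paper. Analyticity comes from the algebra property of the two-ended H\"older spaces together with analytic inversion on uniformly positive functions. The endpoint profiles are then identified by passing to $q\to\pm\infty$, solving the laminar ODE, and using strict monotonicity of $\mathcal Q$ on $(\lambda_{\rm cr},\infty)$ to select the supercritical root. You check the algebra property explicitly, and you use the supercriticality hypothesis directly in place of the paper's appeal to continuity from $(0,0)$; both are harmless refinements of the same argument.
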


\begin{proof}
	{The two-ended H\"older spaces are closed under pointwise multiplication. The expressions in \eqref{eq:nonlinear-operator} involve products of derivatives of $w$ of order at most two, powers of $(H_p+w_p)^{-1}$, and the term $\beta r$.}
	
	{At each point of $\mathscr U$, the denominator $H_p+w_p$ is bounded away from zero, and this bound persists in a neighborhood. Inversion is real analytic on the open set of uniformly positive functions in the H\"older algebra and preserves the endpoint limits. Hence $\mathcal F:\mathscr U\to\mathscr Y_\infty$ is real analytic.}
	
	For a zero, pass to either limit \(q\to\pm\infty\).  All \(q\)-derivatives vanish,
	{and the endpoint profile $K_\pm:=H+w_\pm$, with $K_{p,\pm}:=\partial_pK_\pm$, satisfies}
	\[
	\left(-\frac1{2K_{p,\pm}^2}+\Gamma\right)_p=0,\qquad {K_\pm(-m)=0}.
	\]
	Thus \(K_\pm(p)=H(p;\mu)\) {for a suitable parameter $\mu>-2\min\Gamma$ at each end. Denote these two parameters by $\mu_\pm$ and the endpoint pressures by $R_\pm(\beta)$. The limiting boundary condition is}
	\[
	\mathcal Q(\mu)+R_\pm(\beta)=\mathcal Q(\lambda),
	\qquad R_-(\beta)=0,\quad R_+(\beta)=\beta.
	\]
	On a supercritical component, \(\mathcal Q'(\mu)>0\).  Continuity from \((0,0)\) and
	{Lemma~\ref{lem:downstream-state} imply} \(\mu_-=\lambda\) and
	\(\mu_+=\lambda_+(\beta)\).  This proves \eqref{eq:endpoint-profiles}.
\end{proof}
{For solutions on this supercritical component, we write $K_\pm(p)=H(p)+w_\pm(p)$. Thus $K_-=H(\cdot;\lambda)$ is the upstream profile and $K_+=H(\cdot;\lambda_+(\beta))$ is the downstream profile.}


	\section{{Linear analysis and small-amplitude fronts}}\label{sec:local-fronts}

{We analyze the linearization of the nonlinear operator introduced in Section~\ref{sec:formulation}. The supercriticality of the limiting flows yields invertibility of the limiting operators and the Fredholm property. We then apply the analytic implicit function theorem to construct small-amplitude monotone fronts.}

\subsection{{Linearized operators}}
	{The Fr\'echet derivative $\mathscr L=\mathcal F_w(\beta,w):\mathscr X_\infty\to\mathscr Y_\infty$ is given by}
	\begin{equation}\label{eq:fww}
		\begin{aligned}
			{\mathscr L_1\varphi}&=\mathcal{F}_{1w}(w)\varphi=
			\left( \frac{1 + w_q^2}{(H_{p} + w_p)^3} \varphi_p - \frac{w_q}{(H_{p} + w_p)^2} \varphi_q \right)_p
			+\left( \frac{1}{H_{p} + w_p} \varphi_q - \frac{w_q}{(H_{p} + w_p)^2} \varphi_p \right)_q\\
			{\mathscr L_2\varphi}&=\mathcal{F}_{2w}(\beta,w)\varphi=-\frac{1 + w_q^2}{(H_{p} + w_p)^3} \varphi_p + \frac{w_q}{(H_{p} + w_p)^2} \varphi_q + g \varphi.
		\end{aligned}
	\end{equation}
	{For each fixed $(\beta,w)\in\mathscr U$, the operator $\mathscr L_1$ is uniformly elliptic and $\mathscr L_2$ is uniformly oblique.}

	{In particular, let $\mathcal L:=\mathcal F_w(0,0)$ denote the laminar linearization. Its components are}
	\begin{equation*}
		\mathcal{F}_{1w}(0)\varphi = \left( \frac{1}{H_{p}^3}\varphi_p \right)_p + \left( \frac{1}{H_{p}}\varphi_q \right)_q, \quad \mathcal{F}_{2w}(0)\varphi = -\frac{1}{H_{p}^3}\varphi_p + g\varphi\bigg|_{\mathcal T}.
	\end{equation*}
	{The following weighted trace inequality supplies the coercivity of the laminar linearization.}
	\begin{lemma}[Weighted trace inequality]\label{lem:trace}
		Let $K(p)=H(p;\mu)$. If \(\zeta\in H^1(-m,0)\) and \(\zeta(-m)=0\), then
		\begin{equation}\label{eq:weighted-trace}
			g\zeta(0)^2
			\le F^{-2}(\mu)\int_{-m}^0K_p^{-3}\zeta_p^2d p.
		\end{equation}
		Consequently, when \(F(\mu)>1\), we have
		\begin{equation}\label{eq:transverse-coercivity}
			\int_{-m}^0K_p^{-3}\zeta_p^2d p-g\zeta(0)^2
			\ge(1-F^{-2}(\mu))\int_{-m}^0K_p^{-3}\zeta_p^2d p.
		\end{equation}
	\end{lemma}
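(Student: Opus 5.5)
The inequality is a Cauchy--Schwarz argument with the weight chosen so that the constant is exactly $F^{-2}(\mu)/g$. Since $\zeta(-m)=0$ and $\zeta\in H^1(-m,0)$ is absolutely continuous, I would start from
\[
\zeta(0)=\int_{-m}^0\zeta_p\,dp=\int_{-m}^0 \bigl(K_p^{3/2}\bigr)\bigl(K_p^{-3/2}\zeta_p\bigr)\,dp .
\]
Here $K_p=(\mu+2\Gamma(p))^{-1/2}$ is continuous and strictly positive on $[-m,0]$, because $\mu>-2\min\Gamma$. So both factors are legitimate $L^2$ functions and the weights are bounded above and below. The Cauchy--Schwarz inequality then gives
\[
\zeta(0)^2\le\Bigl(\int_{-m}^0K_p^{3}\,dp\Bigr)\Bigl(\int_{-m}^0K_p^{-3}\zeta_p^2\,dp\Bigr).
\]

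Next I would multiply by $g$ and recall from \eqref{eq:F-mu} that $g\int_{-m}^0 K_p^3\,dp=g\int_{-m}^0H_p^3(p;\mu)\,dp=F^{-2}(\mu)$. This yields \eqref{eq:weighted-trace} directly. Equality holds exactly when $\zeta_p$ is proportional to $K_p^{3}$, so the constant is sharp, though sharpness is not needed here.

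For \eqref{eq:transverse-coercivity}, I would subtract: using \eqref{eq:weighted-trace},
\[
\int_{-m}^0K_p^{-3}\zeta_p^2\,dp-g\zeta(0)^2\ge\bigl(1-F^{-2}(\mu)\bigr)\int_{-m}^0K_p^{-3}\zeta_p^2\,dp .
\]
When $F(\mu)>1$ the factor $1-F^{-2}(\mu)$ is positive, so this is a genuine coercivity bound.

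There is no real obstacle in this argument. The only point needing care is choosing the weight split $K_p^{3/2}\cdot K_p^{-3/2}$ so that the resulting integral is exactly the Froude-number integral.
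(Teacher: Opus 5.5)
Your proof is correct and follows the paper's argument: write $\zeta(0)=\int_{-m}^0\zeta_p\,dp$ using $\zeta(-m)=0$, apply Cauchy--Schwarz with the split $K_p^{3/2}\cdot K_p^{-3/2}\zeta_p$, identify $g\int_{-m}^0K_p^3\,dp$ with $F^{-2}(\mu)$ via \eqref{eq:F-mu}, and rearrange to get the coercivity bound. Your remarks on positivity of $K_p$ and the equality case are accurate additions, not needed for the lemma.
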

	\begin{proof}
		{Since $\zeta(-m)=0$, the weighted Cauchy--Schwarz inequality gives}
		\begin{equation*}
			g\zeta^2(0)=g\left(\int_{-m}^0\zeta_pdp\right)^2\leq g\left(\int_{-m}^0K_p^3d p\right) \left(\int_{-m}^0K_p^{-3}\zeta_p^2d p\right).
		\end{equation*}
		{Using \eqref{eq:F-mu} gives \eqref{eq:weighted-trace}, and rearranging yields \eqref{eq:transverse-coercivity}.}
	\end{proof}

	{On the unbounded strip, the Fredholm property is governed by the limiting operators~\cite{vol}. Passing to the limits $q\to\pm\infty$ in \eqref{eq:fww} gives $\mathscr L_\pm=(\mathscr L_{\pm,1},\mathscr L_{\pm,2})$, where}
	\begin{equation*}
		\begin{aligned}
		{\mathscr L_{\pm,1}}\varphi=\left(\frac{1}{K_{p,\pm}^3}\varphi_p\right)_p+\left(\frac{1}{K_{p,\pm}}\varphi_q\right)_q,\\
		{\mathscr L_{\pm,2}}\varphi=-\frac{1}{K_{p,\pm}^3}\varphi_p+g\varphi\bigg|_{\mathcal T}.
		\end{aligned}
	\end{equation*}
	{Separation of variables in the limiting equation $\mathscr L_\pm\varphi=0$ leads to the one-dimensional Sturm--Liouville problem}
	\begin{equation}\label{eq:limsl}
		\begin{aligned}
			&(K_{p,\pm}^{-3}\phi_p)_p=\sigma K_{p,\pm}^{-1}\phi\qquad\mathrm{in~}(-m,0)\\
			&-K_{p,\pm}^{-3}(0)\phi_p(0)+g\phi(0)=0,\quad\phi(-m)=0.
		\end{aligned}
	\end{equation}
	{Classical spectral theory~\cite{laxbook} gives a sequence of eigenvalues $\sigma_0^\pm(K)>\sigma_1^\pm(K)>\cdots$ tending to $-\infty$. The superscript $\pm$ distinguishes the two ends; for a single laminar profile $K$, we write $\sigma_0(K)$ for its principal eigenvalue. Its sign is determined by the Froude number.}
	
	\begin{lemma}[{Sign of the principal eigenvalue}]
	 {For a laminar flow $K(p)=H(p;\mu)$ with Froude number $F(\mu)$, we have $F(\mu)\geq1$ if and only if $\sigma_0(K)\leq0$. In particular, $F(\mu)=1$ if and only if $\sigma_0(K)=0$.}
	\end{lemma}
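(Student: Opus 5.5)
We will use the variational characterization of the principal eigenvalue of \eqref{eq:limsl} and compare it with the weighted trace inequality of Lemma~\ref{lem:trace}. Multiplying \eqref{eq:limsl} by $\phi$, integrating by parts, and using the Robin condition at $p=0$ and $\phi(-m)=0$, we obtain the Rayleigh quotient
\begin{equation*}
\sigma_0(K)=\sup_{\zeta\in H^1(-m,0),\ \zeta(-m)=0,\ \zeta\ne0}
\frac{g\zeta(0)^2-\int_{-m}^0K_p^{-3}\zeta_p^2\,dp}{\int_{-m}^0K_p^{-1}\zeta^2\,dp}.
\end{equation*}
The supremum is attained by the principal eigenfunction, by the standard compactness argument for the embedding $H^1\hookrightarrow L^2$ together with the compactness of the trace map. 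Thus the sign of $\sigma_0(K)$ equals the sign of $\sup_\zeta\bigl(g\zeta(0)^2-\int K_p^{-3}\zeta_p^2\bigr)$ over the same admissible class, since the denominator is positive.

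For the direction $F(\mu)\ge1\Rightarrow\sigma_0(K)\le0$, we apply \eqref{eq:weighted-trace}. Since $F^{-2}(\mu)\le1$, it gives $g\zeta(0)^2\le\int K_p^{-3}\zeta_p^2$, so the numerator is nonpositive for every admissible $\zeta$. For the converse, we exhibit the equality case of the Cauchy--Schwarz step. Take $\zeta_*(p):=\int_{-m}^pK_p^3(s)\,ds$, so that $\zeta_{*,p}=K_p^3$. Then
\begin{equation*}
\int_{-m}^0K_p^{-3}\zeta_{*,p}^2\,dp=\int_{-m}^0K_p^3\,dp=\frac{1}{gF^2(\mu)},
\qquad g\zeta_*(0)^2=\frac{1}{gF^4(\mu)}.
\end{equation*}
The numerator is therefore $\frac{1}{gF^2}(F^{-2}-1)$. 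If $F(\mu)<1$, this is positive, so $\sigma_0(K)>0$. This proves the equivalence $F(\mu)\ge1\iff\sigma_0(K)\le0$.

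For the critical case, first suppose $F=1$. Then $\sigma_0\le0$, and the test function $\zeta_*$ makes the numerator vanish, so $\sigma_0\ge0$; hence $\sigma_0=0$. Conversely, suppose $\sigma_0=0$. Then $F\ge1$, so it remains to exclude $F>1$. If $F>1$, inequality \eqref{eq:transverse-coercivity} shows the numerator is at most $-(1-F^{-2})\int K_p^{-3}\zeta_p^2<0$ for every $\zeta\ne0$, because $\zeta(-m)=0$ forces $\zeta_p\not\equiv0$. Since the supremum is attained by the eigenfunction, this would give $\sigma_0<0$, a contradiction.

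The only step that needs care is the justification of the Rayleigh characterization with the Robin boundary term and the attainment of the supremum. This is the standard theory cited from~\cite{laxbook}. It is attainment, rather than a mere supremum, that lets strict coercivity yield strict negativity of $\sigma_0$.
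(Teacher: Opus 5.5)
Your proposal is correct and follows essentially the same route as the paper: the Rayleigh quotient, the weighted trace inequality of Lemma~\ref{lem:trace} for $F\ge1$, and the explicit test function $\int_{-m}^pK_p^3$ for $F<1$. The only small difference is at $F=1$. The paper observes that this function is an actual eigenfunction with $\sigma=0$, whereas you use it as a test function with vanishing numerator, and you make explicit the attainment of the supremum needed for strict negativity of $\sigma_0$ when $F>1$, which the paper leaves implicit.
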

	\begin{proof}
	{The principal eigenvalue $\sigma_0(K)$ has the variational characterization}
	\begin{equation}\label{eq:efnu0}
		\sigma_0(K)=\sup_{\phi\in H^1(-m,0),\phi(-m)=0,\phi\not\equiv0}\frac{-\int_{-m}^0K_p^{-3}\phi_p^2dp+g\phi^2(0)}{\int_{-m}^0K_p^{-1}\phi^2dp}.
	\end{equation}
	{Direct integration shows that}
	\[\phi(p)=\int_{-m}^p K_p^3(s)ds\]
	solves \eqref{eq:limsl} with $\sigma=0$ if and only if $F(\mu)=1$. On the other hand, for $F(\mu)>1$,  by Lemma \ref{lem:trace}, we have
		\begin{equation*}
			g\phi^2(0)-\int_{-m}^0K_p^{-3}\phi_p^2dp\leq-(1-F^{-2}(\mu))\int_{-m}^0K_p^{-3}\phi_p^2dp<0.
		\end{equation*}
		For $F(\mu)<1$, taking $\phi(p)=\int_{-m}^pK_p^3(s)ds$ in \eqref{eq:efnu0} gives $\sigma_0(K)>0$.
		{This proves the claimed equivalences.}
		\end{proof}

	{The positive zeroth-order term in the top boundary condition prevents a direct application of the usual maximum principle. A quotient transformation using a positive comparison function restores the required sign. We first state the argument for functions that vanish at both ends; a translation argument below extends its use to bounded solutions of the limiting equations.}

	{For a nonnegative integer $j$, we use $\mathbb M_2(C_b^j(\overline\Omega))$ to denote the space of $2\times2$ matrix fields with entries in $C_b^j(\overline\Omega)$; the same convention applies to H\"older regularity.}

\begin{lemma}[Comparison function via the quotient transformation]\label{lem:comparison-function}
		Let $A=A^T \in \mathbb  M_2(C_b^1(\overline{\Omega}))$ be uniformly positive definite and {define the interior and boundary operators by}
		\begin{equation*}
		Lu:=\mathrm{div}(A\nabla u)\mathrm{~on~}\Omega,\quad Bu:=-{\mathbf n\cdot}(A\nabla u)+gu\mathrm{~on~}\mathcal T.
		\end{equation*}
		{Suppose that there exist a positive comparison function $\phi\in C_b^2(\overline\Omega)$ and constants $c_0,c_1>0$ such that}
		\begin{equation*}
			\phi\ge c_0,\qquad L\phi\le-c_1,
			\qquad B\phi\le-c_1.
		\end{equation*}
		{Let $f$ be a nonpositive boundary datum on $\mathcal T$. If $u\in C_b^2(\overline\Omega)\cap C_0(\overline\Omega)$ satisfies}
		\begin{equation*}
			\begin{aligned}
				&Lu=0\quad\mathrm{in~}\Omega,\\
				&Bu=f\leq0 \quad\mathrm{on~}\mathcal T,\\
				&u=0\quad\mathrm{on~}\mathcal B,
			\end{aligned}
		\end{equation*}
		then $u\geq0$ on $\Omega\cup \mathcal{T}$. In particular, if $f<0$ on $\mathcal{T}$, then $u>0$ on $\Omega\cup \mathcal{T}$.
 	\end{lemma}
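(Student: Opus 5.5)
We set $v:=u/\phi$, which is well defined and lies in $C_b^2(\overline\Omega)\cap C_0(\overline\Omega)$ because $\phi\ge c_0>0$ and $\phi\in C_b^2$. The plan is to show that $v$ satisfies an elliptic problem with a zeroth-order term of the correct sign in the interior and a boundary condition on $\mathcal T$ that is again oblique with a favourable zeroth-order coefficient. Once this is done, the usual weak and strong maximum principles and the Hopf lemma apply to $v$.

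\textbf{Transformed equations.} Substituting $u=\phi v$ into $Lu=0$ gives
\[
\mathrm{div}(A\nabla v)+\frac{2}{\phi}(A\nabla\phi)\cdot\nabla v+\frac{L\phi}{\phi}\,v=0\quad\text{in }\Omega,
\]
and here the coefficient of $v$ satisfies $L\phi/\phi\le -c_1/\|\phi\|_\infty<0$. On $\mathcal T$ we have $Bu=\phi\,(-\mathbf n\cdot A\nabla v)+(B\phi)\,v=f$, so
\[
-\mathbf n\cdot A\nabla v+\frac{B\phi}{\phi}\,v=\frac f\phi\le0 .
\]
Since $\mathbf n\cdot A\mathbf n>0$, the operator $-\mathbf n\cdot A\nabla$ is an inward oblique derivative, and the coefficient $B\phi/\phi$ is strictly negative. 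On $\mathcal B$ we simply have $v=0$.

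\textbf{Sign argument.} Suppose $\inf v<0$. Because $v\to0$ as $|q|\to\infty$, this infimum is negative and is attained at some point $x_0\in\overline\Omega$, and $x_0\notin\mathcal B$.
\begin{itemize}
\item If $x_0\in\Omega$, then $\nabla v(x_0)=0$ and $\mathrm{div}(A\nabla v)(x_0)\ge0$. The equation therefore forces $(L\phi/\phi)v(x_0)\le0$. But $L\phi/\phi<0$ and $v(x_0)<0$ give $(L\phi/\phi)v(x_0)>0$, a contradiction.
\item If $x_0\in\mathcal T$, then at a minimum on the top boundary $\partial_pv(x_0)\le0$ and $\partial_qv(x_0)=0$. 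Hence $-\mathbf n\cdot A\nabla v(x_0)=-A_{22}\partial_pv(x_0)\ge0$. Together with $(B\phi/\phi)v(x_0)>0$, the left side of the boundary condition is strictly positive, contradicting $f/\phi\le0$.
\end{itemize}
Thus $v\ge0$, and hence $u\ge0$. Note that the strict signs of $L\phi$ and $B\phi$ let us avoid the Hopf lemma in this step.

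\textbf{Strict positivity when $f<0$.} We have $v\ge0$ and $v$ solves an elliptic equation with a nonpositive zeroth-order coefficient. If $v(x_0)=0$ at some $x_0\in\Omega$, the strong maximum principle applied to $-v$ (a nonpositive function attaining its maximum $0$ in the interior) gives $v\equiv0$. Then $f=Bu=0$, contradicting $f<0$. If instead $v(x_0)=0$ at some $x_0\in\mathcal T$, the boundary condition reduces to $-A_{22}\partial_pv(x_0)-A_{12}\partial_qv(x_0)=f/\phi<0$. But $x_0$ is a minimum of $v$ on the boundary, so $\partial_qv(x_0)=0$ and $\partial_pv(x_0)\le0$, which makes the left side $\ge0$; this is a contradiction. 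The interior positivity follows from the strong maximum principle as before.

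\textbf{Expected obstacles.} The main technical point is the unboundedness of the strip. The infimum is attained only because $u\in C_0$, which is why that hypothesis is imposed. The coefficient $2A\nabla\phi/\phi$ must be bounded, which holds since $\phi\in C_b^2$ and $\phi\ge c_0$. We also need the $C^1$ regularity of $A$ to expand $\mathrm{div}(A\nabla(\phi v))$.
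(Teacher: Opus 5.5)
Your proof is correct and follows essentially the same route as the paper: the quotient $v=u/\phi$, attainment of a negative minimum via the $C_0$ decay, a case analysis on interior, bed and top-boundary minima, and strict positivity from the strong maximum principle combined with the boundary equation on $\mathcal T$. The only difference is minor: where the paper invokes the strong maximum principle to exclude a negative interior minimum and Hopf's lemma to exclude a zero on $\mathcal T$, you use elementary pointwise first- and second-order conditions, which suffice because $L\phi/\phi<0$ and $f<0$ hold strictly.
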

	\begin{proof}
		Write \(u=\phi v\). {Direct expansion shows that $v\in C_b^2(\overline\Omega)\cap C_0(\overline\Omega)$ satisfies}
		\begin{subequations}\label{eq:quotient-equation}
			\begin{align}
			\operatorname{div}(A\nabla v)
				+2A\nabla(\log\phi)\cdot\nabla v
				+\frac{L\phi}{\phi}v&=0\quad  &&\text{in }\Omega,\label{eq:quotient-interior}\\
				-\mathbf n\cdot A\nabla v+\frac{B\phi}{\phi}v&=\frac{f}{\phi} &&\text{on }\mathcal T,\label{eq:quotient-boundary}\\
				v&=0              &&\text{on }\mathcal B.\label{eq:quotient-bed}
			\end{align}
		\end{subequations}
	{Since $L\phi/\phi<0$, the maximum principle applies to the interior equation. Suppose that $v$ takes a negative value. Because $v\to0$ at both ends, it attains a negative minimum. The strong maximum principle excludes an interior minimum, and $v=0$ excludes a minimum on $\mathcal B$. At a minimum on $\mathcal T$, the tangential derivative vanishes and the outward normal derivative is nonpositive.}  Hence
		\(-\mathbf n\cdot A\nabla v\ge0\), while
		\((B\phi/\phi)v>0\).  This contradicts
		\eqref{eq:quotient-boundary}, whose right-hand side is non-positive.  Thus
		\(u\ge0\).  The strong maximum principle gives positivity in \(\Omega\) unless
		\(u\equiv0\); the latter is impossible when \(f<0\).  A zero on \(\mathcal T\) is
		excluded directly from {the boundary equation and Hopf's lemma}.
	\end{proof}
	 {For a laminar profile $K=H(\cdot;\mu)$, we also write the Froude number $F(K):=F(\mu)$. When $F(K)>1$, define the comparison function}
	\begin{equation}\label{eq:explicit-comparison}
	\Phi[K](p)=1+C_1\int_{-m}^pK_p^3(t)dt-C_2\left(\int_{-m}^pK_p^3(t)\right)^2,
	\end{equation}
	{where $F=F(K)>1$ and $C_2>0$. Choose $C_1>\frac{2C_2}{gF^2}$ sufficiently large that} {$\Phi\geq1$} and {the right-hand side of} \eqref{eq:explicit-comparison2} {is strictly negative. Direct differentiation gives}
	\begin{equation*}
		(K_p^{-3}\Phi_p)_p=-2C_2K_p^3<0
	\end{equation*}
	and
	\begin{equation}\label{eq:explicit-comparison2}
		-K_p^{-3}\Phi_p+g\Phi\bigg|_{\mathcal{T}}=g-C_1\left(1-\frac{1}{F^2}\right)+C_2\left(\frac{2}{gF^2}-g\left(\frac{1}{gF^2}\right)^2\right).
	\end{equation}
	The assumption $C_1>\frac{2 C_2}{gF^2}$ implies $\Phi_p(p)>0$ and hence $\Phi\geq1$.

	\subsection{{Limiting invertibility and the Fredholm property}}
	{The upstream and downstream profiles generally give different limiting operators. We first prove that both are invertible on bounded H\"older spaces, then recover the endpoint limits of solutions, and finally compute the Fredholm index by two continuous homotopies to a constant-coefficient operator.}

	\begin{lemma}\label{lem:limit-invertibility}
	If $\sigma_0^{\pm}<0$, then
	$\mathscr{L}_{\pm}:\mathscr{X}_b\to \mathscr{Y}_b$
	is an isomorphism.
	\end{lemma}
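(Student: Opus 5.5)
The operators $\mathscr L_\pm$ have coefficients depending only on $p$, so we prove injectivity on bounded solutions and then obtain surjectivity from Fourier analysis in $q$ plus Schauder theory. Fix one end and write $K=K_\pm$ and $\mathscr L=\mathscr L_\pm$. The hypothesis $\sigma_0^\pm<0$ means, by the lemma on the sign of the principal eigenvalue, that $F(K)>1$. Hence the comparison function $\Phi[K]$ of \eqref{eq:explicit-comparison} is available. It satisfies $\Phi\ge1$, $(K_p^{-3}\Phi_p)_p\le -c_1$ and $-K_p^{-3}\Phi_p+g\Phi\le -c_1$ on $\mathcal T$. Since $\Phi$ is independent of $q$, we get $L\Phi=(K_p^{-3}\Phi_p)_p\le-c_1$, where $A=\mathrm{diag}(K_p^{-1},K_p^{-3})$.

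\emph{Injectivity.} Let $\varphi\in\mathscr X_b$ with $\mathscr L\varphi=0$, and suppose $M:=\sup\varphi/\Phi>0$. The quotient $v=\varphi/\Phi$ does not decay at infinity, so Lemma~\ref{lem:comparison-function} cannot be applied directly. We use the translation argument announced before it. Choose $q_n$ with $v(q_n,p_n)\to M$ and set $\varphi_n=\varphi(\cdot+q_n,\cdot)$. The operator is translation invariant and $\varphi$ is bounded in $C^{3+\alpha}$, so a subsequence converges in $C^{3}_{\rm loc}$ to some $\tilde\varphi$. This limit solves $\mathscr L\tilde\varphi=0$, and $\tilde v=\tilde\varphi/\Phi$ attains its maximum $M>0$ at a point $(0,p_*)$.

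The quotient equations \eqref{eq:quotient-equation} now hold for $\tilde v$ with $f=0$, and the zeroth-order coefficients $L\Phi/\Phi$ and $B\Phi/\Phi$ are strictly negative. We rule out each location of the maximum:
\begin{itemize}
\item[(i)] an interior positive maximum is impossible, because there $\operatorname{div}(A\nabla \tilde v)\le0$ and $\nabla\tilde v=0$, so the interior equation gives $(L\Phi/\Phi)M=-\operatorname{div}(A\nabla\tilde v)\ge0$, contradicting $L\Phi<0$;
\item[(ii)] a maximum on $\mathcal B$ is impossible, because $\tilde v=0$ there;
\item[(iii)] a maximum on $\mathcal T$ gives $-\mathbf n\cdot A\nabla\tilde v\le0$, so the boundary equation forces $(B\Phi/\Phi)M\ge0$, a contradiction.
\end{itemize}
Hence $\varphi\le0$. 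Applying the same argument to $-\varphi$ gives $\varphi\equiv0$.

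\emph{Surjectivity.} Given $(f_1,f_2)\in\mathscr Y_b$, we first solve with data in $L^2$ or with compact support. The quadratic form
$$\int_\Omega \bigl(K_p^{-1}\varphi_q^2+K_p^{-3}\varphi_p^2\bigr)\,dq\,dp-g\int_{\mathcal T}\varphi^2\,dq$$
is coercive on $\{\varphi\in H^1(\Omega):\varphi=0\text{ on }\mathcal B\}$ by \eqref{eq:transverse-coercivity}, applied slice by slice in $q$. Lax--Milgram then gives a weak solution, and Schauder estimates for the oblique problem upgrade it to a classical one.

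For general bounded H\"older data we use uniform local estimates. Truncate the data with cutoffs $\chi_R f$, solve, and pass to the limit $R\to\infty$. This requires an a priori bound
$$\|\varphi\|_{C^{3+\alpha}_b}\le C\|\mathscr L\varphi\|_{\mathscr Y_b}$$
with $C$ independent of the solution. We prove it by contradiction using translation compactness. Suppose $\|\varphi_n\|_{C^{3+\alpha}_b}=1$ and $\|\mathscr L\varphi_n\|_{\mathscr Y_b}\to0$. The Schauder estimate
$$\|\varphi\|_{C^{3+\alpha}}\le C\bigl(\|\mathscr L\varphi\|+\|\varphi\|_{C^0}\bigr)$$
gives $\|\varphi_n\|_{C^0}\ge c>0$. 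Translate so that this $C^0$ size is attained near $q=0$ and extract a $C^3_{\rm loc}$ limit. The limit is a nonzero bounded solution of $\mathscr L\tilde\varphi=0$, which contradicts injectivity.

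With the a priori bound, the truncated solutions are uniformly bounded. A locally convergent subsequence yields a solution in $\mathscr X_b$, and the bound shows the inverse is bounded.

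The main obstacle is getting uniform bounds on non-decaying bounded data. The translation-compactness contradiction argument above handles it, and it rests on the injectivity step together with the Schauder estimate for the oblique boundary problem.
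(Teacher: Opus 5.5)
Your proof is correct. Your injectivity step matches the paper's: you divide by $\Phi[K]$, translate along a sequence approaching the supremum of the quotient, and exclude an extremum in the interior, on $\mathcal B$, and on $\mathcal T$. Your a priori estimate (global Schauder bound plus translation compactness contradicting injectivity) also matches. You differ from the paper in how you prove surjectivity.

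The paper solves on finite rectangles $\Omega_R=(-R,R)\times(-m,0)$ with zero Dirichlet data on $q=\pm R$. It then gets an $R$-independent bound from an explicit barrier. Since $\mathscr L_{\pm,1}\Phi\le -c_*$ and $\mathscr L_{\pm,2}\Phi\le -c_*$, the functions $M_*\Phi\pm u_R$ with $M_*c_*\ge\|f_1\|_{C^0}+\|f_2\|_{C^0}$ are nonnegative by the quotient maximum principle. This gives $\|u_R\|_{C^0}\le M_*\|\Phi\|_{C^0}$. Local Schauder estimates away from $q=\pm R$ and a diagonal limit then produce the solution with a quantitative bound, without using the contradiction argument.

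You instead stay on the whole strip, where the form is coercive on $\{u\in H^1(\Omega):u=0\text{ on }\mathcal B\}$. You truncate the data and take the uniform bound from the compactness-based a priori estimate. This avoids the artificial vertical boundaries and the Dirichlet/oblique corners of $\Omega_R$. It also reuses an estimate you need anyway for the bounded inverse, at the cost of a non-explicit constant.

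You should state one step explicitly. The a priori estimate holds only on $\mathscr X_b$, so you must first show that each truncated solution $\varphi_R$ lies in $C_b^{3+\alpha}$. This follows from local estimates on unit strips of the form $\|\varphi_R\|_{C^{3+\alpha}}\le C(\text{data}+\|\varphi_R\|_{L^2})$ on slightly larger strips, with the $L^2$ norms bounded by $\|\varphi_R\|_{H^1(\Omega)}$. The Fourier analysis you announce in your first sentence is never actually used.

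Both arguments also go through for the shifted family $(\mathscr L_{\pm,1}-t\tau,\mathscr L_{\pm,2})$ that the paper tracks for the later index homotopy. The extra term only adds coercivity and makes $\Phi[K]$ a stricter supersolution.
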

	\begin{proof}
	{We give the proof for the downstream limiting operator. Write $\mathscr L_{+,1}$ and $\mathscr L_{+,2}$ for its interior and boundary components, and let $\Phi_+:=\Phi[K_+]$ be the positive comparison function in \eqref{eq:explicit-comparison}. Consider the shifted family}
\begin{equation*}
	\mathscr L_{+,t}:=
	(\mathscr L_{+,1}-t\tau,\mathscr L_{+,2}),
	\qquad 0\le t\le1,
	\end{equation*}
	{where $\tau>0$ is fixed. Denote by $\sigma_0(t)$ the principal transverse eigenvalue of $\mathscr L_{+,t}$; it is given by the variational formula}
	\begin{equation*}
		\sigma_0(t)=\sup_{\substack{\varphi\in H^1(-m,0)\\
				\varphi(-m)=0,\ \varphi\not\equiv0}}
		\frac{g\varphi(0)^2-
			\displaystyle\int_{-m}^0K_{p,+}^{-3}\varphi_p^2d p-
			t\tau\displaystyle\int_{-m}^0\varphi^2d p}
		{\displaystyle\int_{-m}^0K_{p,+}^{-1}\varphi^2d p}.
	\end{equation*}
	{The variational formula gives $\sigma_0(t)\leq\sigma_0^+<0$ for $t\in[0,1]$. To justify injectivity on the bounded H\"older space, let $u\in\mathscr X_b$ solve $\mathscr L_{+,t}u=0$ and divide by the positive strict supersolution $\Phi_+$. If the quotient is nonzero, translate a sequence approaching its positive supremum (or negative infimum) in the $q$-direction. Boundary Schauder compactness gives a limiting bounded solution whose quotient attains a nonzero extremum. The strong maximum principle, together with the boundary point lemma on $\mathcal T$ and the Dirichlet condition on $\mathcal B$, gives a contradiction. Hence $u=0$, uniformly for $t\in[0,1]$.}
	
	{We next prove the a priori estimate}
	\begin{equation}\label{eq:limit-apriori}
		\|\phi\|_{C_b^{3+\alpha}}\leq C\|{\mathscr L_{+,t}}\phi\|_{\mathscr{Y}_b},\qquad t\in[0,1].
	\end{equation}
	{The Agmon--Douglis--Nirenberg Schauder estimate~\cite{agmon} gives}
	\begin{equation}\label{eq:limit-global-Schauder}
	\|\phi\|_{C_b^{3+\alpha}}\leq C(\|\mathscr{L}_{+,t}\phi\|_{\mathscr{Y}_b}+\|\phi\|_{C^0}),
	\end{equation}
	{where $C$ is independent of $t$. It remains to bound the $C^0$ term by the norm of the data. If such a bound failed, there would be sequences $t_n\to t_*\in[0,1]$ and $\phi_n\in\mathscr X_b$ such that}
	\begin{equation*}
		\|\phi_n\|_{C^0}=1\qquad \|\mathscr{L}_{+,t_n}\phi_n\|_{\mathscr{Y}_b}\to0.
	\end{equation*}
	{Estimate~\eqref{eq:limit-global-Schauder} bounds $\phi_n$ in $C_b^{3+\alpha}$.}
	 {Choose points $(q_n,p_n)$ such that $|\phi_n(q_n,p_n)|\geq1/2$, and translate by setting $\tilde\phi_n(q,p)=\phi_n(q+q_n,p)$. After passing to a subsequence, boundary Schauder compactness gives a limit $\phi_*$ in $C_{\mathrm{loc}}^3(\overline\Omega)$ satisfying $\mathscr L_{+,t_*}\phi_*=0$. We may also assume $p_n\to p_*$. The uniform gradient bound and the zero boundary values on $\mathcal B$ imply $p_*>-m$, while $|\phi_*(0,p_*)|\geq1/2$. This contradicts injectivity at $t=t_*$.}

 {To prove surjectivity, fix data $(f_1,f_2)\in\mathscr Y_b$. For $R>0$, define the bounded rectangle}
 \begin{equation*}
 	{\Omega_R:=(-R,R)\times(-m,0)},
 \end{equation*}
 and the function space
 \begin{equation*}
 	V_R:=\{u\in H^1(\Omega_R):u=0\mathrm{~on~}\mathcal B,~u=0\mathrm{~on~}|q|=R\},
 \end{equation*}
 {For $u,v\in V_R$, consider the bilinear form}
 \begin{equation*}
 	 \begin{split}
 		\mathfrak a_{R}[u,v]
 		:= {}&\int_{\Omega_R}
		\bigl(K_{p,+}^{-1}u_qv_q+K_{p,+}^{-3}u_pv_p+t\tau uv\bigr)\,d qd p-g\int_{-R}^{R}u(q,0)v(q,0)\,d q .
 	\end{split}
 \end{equation*}
 {The weighted trace estimate \eqref{eq:transverse-coercivity} and the Poincar\'e inequality give}
 \begin{equation*}
 	\begin{split}
 		\mathfrak a_{R}[u,u]
 		&\ge \int_{\Omega_R}K_{p,+}^{-1}u_q^2\,d qd p
 		+(1-F(K_+)^{-2})
 		\int_{\Omega_R}K_{p,+}^{-3}u_p^2\,d qd p\\
 		&\ge c\|u\|_{H^1(\Omega_R)}^2,
 	\end{split}
 \end{equation*}
 where $c>0$ is independent of $R$ and $t$. {The Lax--Milgram theorem gives a unique weak solution $u_R\in V_R$ of the inhomogeneous problem}
 \begin{equation*}
 	\begin{cases}
		({\mathscr L_{+,1}}-t\tau)u_R=f_1&\text{in }\Omega_R,\\
		{\mathscr L_{+,2}}u_R=f_2&\text{on }\mathcal T\cap\{q\in(-R,R)\},\\
		u_R=0&\text{on }\mathcal B\cap\{q\in(-R,R)\},\\
		{u_R=0}&{\text{on }\{q=\pm R\}\times[-m,0].}
	\end{cases}
 \end{equation*}

 {To pass to the infinite strip, we derive an estimate independent of $R$. Let}
 \begin{equation*}
 	\Phi_+(p):=\Phi[K_+](p)=1+C_1\int_{-m}^pK_{p,+}^3(t)dt-C_2\left(\int_{-m}^pK_{p,+}^3(t)\right)^2
 \end{equation*}
 {be the comparison function from \eqref{eq:explicit-comparison}. There is a constant $c_*>0$, independent of $t$, such that}
 \begin{equation*}
 	 ({\mathscr L_{+,1}}-t\tau)\Phi_+\leq-c_*,
 	\qquad {\mathscr L_{+,2}}\Phi_+\leq-c_*,
 	\qquad 0\leq t\leq1.
 \end{equation*}
 {Choose $M_*>0$ large enough that}
 \begin{equation*}
 	M_*c_*\geq \|f_1\|_{C^0(\overline\Omega)}+\|f_2\|_{C^0(\mathcal T)}.
 \end{equation*}
 {Define the upper and lower comparison functions $W_R^\pm=M_*\Phi_+\pm u_R$. Then}
 \begin{equation*}
	(\mathscr{L}_{+,1}-t\tau)W_{R}^{\pm}={M_*(\mathscr L_{+,1}-t\tau)\Phi_+\pm(\mathscr L_{+,1}-t\tau)u_R}\leq -M_* c_* \pm f_1 \leq 0,
 \end{equation*}
 and
  \begin{equation*}
 	\mathscr{L}_{+,2}W_{R}^{\pm}\leq 0.
 \end{equation*}
	 Moreover, on the vertical boundary $\{q=\pm R\}$, we have $W^\pm_{R}={M_*\Phi_+}>0$.

 {Writing $W_R^\pm=\Phi_+V_R^\pm$ defines the quotients $V_R^\pm$. They satisfy the differential inequalities associated with \eqref{eq:quotient-equation}, with the strictly negative interior zeroth-order coefficient $\mathscr L_{+,1}\Phi_+/\Phi_+-t\tau$.} {The quotient maximum-principle argument used in Lemma \ref{lem:comparison-function}, now with nonnegative data on $\mathcal B$ and on the vertical sides,} implies $V_R^{\pm}\geq0$. Hence
\begin{equation}\label{eq:finite-cylinder-C0}
	\|u_R\|_{C^0(\overline\Omega_R)}
	\le M_*\|\Phi_+\|_{C^0([-m,0])},
\end{equation}
 uniformly in $R$ and $t$.

 {Schauder estimates~\cite{Ts} on unit strips at distance at least one from the vertical sides, together with \eqref{eq:finite-cylinder-C0}, yield}
 \begin{equation}\label{eq:finite-cylinder-local-Schauder}
 	\|u_R\|_{C^{3+\alpha}((j,j+1)\times[-m,0])}
 	\le C\bigl(\|f_1\|_{C_b^{1+\alpha}}
 	+\|f_2\|_{C_b^{2+\alpha}}\bigr)
 \end{equation}
	 whenever \(j\in\R\) and \((j-1,j+2)\subset(-R,R)\), with \(C\) independent of
	 \(j,R,t\).  Letting \(R\to\infty\), a diagonal compactness argument produces
 \(u\in C_b^{3+\alpha}(\overline\Omega)\), zero on \(\mathcal B\), which solves
 \({\mathscr L_{+,t}}u=(f_1,f_2)\).  Estimate
 \eqref{eq:finite-cylinder-local-Schauder} passes to the limit, so
 \(u\in\mathscr X_b\).  Thus every \({\mathscr L_{+,t}}\) is onto.  Injectivity and
 \eqref{eq:limit-apriori} make it an isomorphism and give the uniform inverse
 bound.    {The same argument applies to the upstream limiting operator $\mathscr L_-$.}
\end{proof}

	\begin{lemma}\label{lem:range-regularity}
	{Assume $\sigma_0^\pm<0$, and let $f=(f_1,f_2)\in\mathscr Y_\infty$ denote the interior and boundary data. If $\phi\in\mathscr X_b$ satisfies $\mathscr L\phi=f$, then $\phi\in\mathscr X_\infty$. Consequently,}
	\begin{equation}\label{eq:range-intersection}
		\mathscr L(\mathscr X_\infty)=\mathscr L(\mathscr X_b)\cap\mathscr Y_\infty,
		\qquad
		\ker(\mathscr L|_{\mathscr X_b})
		=\ker(\mathscr L|_{\mathscr X_\infty}).
		\end{equation}
	\end{lemma}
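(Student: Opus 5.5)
We must show that a bounded solution of $\mathscr L\phi=f$ with $f\in\mathscr Y_\infty$ has laminar limits at both ends, with the derivatives of the remainder decaying. The plan is to identify the candidate endpoint profiles first and then show that $\phi$ converges to them, using the invertibility of the limiting operators from Lemma~\ref{lem:limit-invertibility} on bounded H\"older spaces.

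\emph{Step 1: candidate limits.} Write $f=\chi_-f_-+\chi_+f_++f_0$ with $q$-independent endpoint data $f_\pm=(f_{1,\pm},f_{2,\pm})$. Consider the ODE problem $\mathscr L_\pm\phi_\pm=f_\pm$ for $q$-independent $\phi_\pm(p)$ with $\phi_\pm(-m)=0$. This is the transverse Sturm--Liouville operator at $\sigma=0$, and since $\sigma_0^\pm<0$ it is invertible. Equivalently, Lemma~\ref{lem:limit-invertibility} applies: its unique bounded solution must be $q$-independent by translation invariance and uniqueness. Hence there are unique $\phi_\pm\in C^{3+\alpha}([-m,0])$, and we set $\psi:=\phi-\chi_-\phi_--\chi_+\phi_+\in\mathscr X_b$.

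\emph{Step 2: equation for the remainder.} The coefficients of $\mathscr L$ lie in two-ended spaces with limits given by those of $\mathscr L_\pm$. Hence $\mathscr L\psi=\tilde f$, where $\tilde f$ collects $f_0$, commutator terms with $\chi_\pm'$ (compactly supported), and terms $(\mathscr L-\mathscr L_\pm)(\chi_\pm\phi_\pm)$, all of which lie in $C_0$-type spaces. So $\tilde f$ decays at infinity, and it remains to show that $\psi$ and its derivatives up to order three tend to zero as $q\to\pm\infty$.

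\emph{Step 3: decay by translation and compactness; this is the main step.} Argue by contradiction. Suppose there are $q_n\to+\infty$ and $p_n$ with $|\psi(q_n,p_n)|\ge\varepsilon$. Translate $\psi_n(q,p)=\psi(q+q_n,p)$. The global Schauder estimate bounds $\psi_n$ in $C_b^{3+\alpha}$, so a subsequence converges in $C^3_{\rm loc}$ to a bounded $\psi_*$. In the limit the coefficients converge to those of $\mathscr L_+$ and the data tend to zero, so $\mathscr L_+\psi_*=0$ with $\psi_*$ bounded and $\psi_*=0$ on $\mathcal B$. Injectivity from Lemma~\ref{lem:limit-invertibility} gives $\psi_*=0$. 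The gradient bound with the Dirichlet condition keeps $p_n$ away from $-m$ in the limit, as in the proof of that lemma, so $|\psi_*(0,p_*)|\ge\varepsilon$, a contradiction. Thus $\psi\to0$ uniformly. The derivatives up to order three also decay: the translates converge to $0$ in $C^3_{\rm loc}$ for every sequence $q_n\to\pm\infty$ by the same argument, or one can interpolate between uniform $C^0$ decay and the $C^{3+\alpha}$ bound. Hence $\psi\in C_b^{3+\alpha}\cap C_0^3$ and $\phi\in\mathscr X_\infty$. The delicate point is checking that the $q$-dependent coefficient convergence (only in $C^{1+\alpha}$-type two-ended spaces) suffices for the local compactness passage; since the coefficients of $\mathscr L$ have one derivative of $w\in C^{3+\alpha}_\infty$, they converge in $C^{2}_{\rm loc}$ after translation, which is enough.

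\emph{Step 4: the identities.} If $f\in\mathscr L(\mathscr X_b)\cap\mathscr Y_\infty$, then $f=\mathscr L\phi$ with $\phi\in\mathscr X_b$, and Steps 1--3 give $\phi\in\mathscr X_\infty$; the reverse inclusion is trivial since $\mathscr X_\infty\subset\mathscr X_b$. Taking $f=0$ gives equality of the kernels.
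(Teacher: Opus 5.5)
Your proof is correct, but it is organized differently from the paper's.

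\textbf{The paper's route.} The paper never constructs the endpoint profile in advance. It proves that each $D^\nu\phi(q,\cdot)$ with $|\nu|\le3$ is Cauchy in $C^0([-m,0])$ as $q\to\infty$. To do this it translates along two sequences $q_{1n},q_{2n}\to\infty$ and passes to the limit in the equation for the difference $v_n=\phi_n^{(1)}-\phi_n^{(2)}$. This produces a bounded element of $\ker\mathscr L_+$, which must vanish. It then has to show separately that the limits with $\nu_q\ge1$ vanish, since a nonzero limit would make a lower-order derivative unbounded. Only after that does it define $\phi_+$ as the limit and read off the ODE it satisfies.

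\textbf{Your route.} You produce $\phi_\pm$ first, by solving the transverse boundary value problem at $\sigma=0$. This problem is uniquely solvable because $\sigma_0^\pm<0$ keeps $0$ out of the Sturm--Liouville spectrum. You then subtract $\chi_\pm\phi_\pm$ and run a single-translate Liouville argument on a remainder whose data decay.

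\textbf{What each buys.} Your version shortens the compactness step. Decay of all derivatives up to order three follows at once from $C^3_{\rm loc}$ convergence to $0$ along every sequence $q_n\to\pm\infty$, and no separate treatment of the tangential limits is needed. The cost is bookkeeping: you must show that $\tilde f$ tends to zero uniformly at both ends. This covers the commutator terms involving $\chi_\pm'$ and $\chi_\pm''$, and the terms $\chi_\pm(\mathscr L-\mathscr L_\pm)\phi_\pm$, whose coefficient differences decay on the support of $\chi_\pm$. You sketch this correctly. The paper's Cauchy-difference argument avoids both this bookkeeping and any need to solve for the candidate limit. Both routes rely only on injectivity of $\mathscr L_\pm$ on $\mathscr X_b$ in the compactness step.

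\textbf{Two minor points.} First, the global Schauder estimate is unnecessary in Step 3, because $\psi\in C_b^{3+\alpha}$ already holds by hypothesis. Second, you do not need to keep $p_*$ away from $-m$: locally uniform convergence up to $\mathcal B$ gives $|\psi_*(0,p_*)|\ge\varepsilon$ directly.
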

	\begin{proof}
		{We first prove convergence at $+\infty$ for $\phi$ and its derivatives of order at most three.} We claim that for any multi-index $\nu=(\nu_q,\nu_p)$ with $|\nu|\leq3$, the family $\{D^\nu \phi(q,\cdot):q\geq Q\}$ is Cauchy in $C^0([-m,0])$ as $Q\to\infty$. If not, there exist some $\epsilon>0$, sequences $q_{1n},q_{2n}\to\infty$ and $p_n\to p_*\in[-m,0]$ such that
		\begin{equation}\label{eq:endpoint-separation}
		|D^\nu\phi(q_{1n},p_n)-D^\nu\phi(q_{2n},p_n)|\geq\epsilon.
		\end{equation}
		{Define the two translated functions $\phi_n^{(k)}$ and their difference $v_n$ by}
		\begin{equation*}
			\phi_n^{(k)}=\phi(q+q_{kn},p),\qquad v_n=\phi_n^{(1)}-\phi_n^{(2)},\quad k=1,2,
		\end{equation*}
		{Let $\mathscr L_n^{(k)}$ be the operator obtained by translating the coefficients of $\mathscr L$ by $q_{kn}$. Subtracting the two translated equations, including their boundary components, gives}
		\begin{equation}\label{eq:translated-difference-equation}
			{\mathscr L_n^{(1)}}v_n
			=f(\,\cdot+q_{1n},\cdot)-f(\,\cdot+q_{2n},\cdot)
			+({\mathscr L_n^{(2)}}-{\mathscr L_n^{(1)}})\phi_n^{(2)}.
		\end{equation}
		Because \(h\in H+\mathscr X_\infty\), the coefficients of both
		\({\mathscr L_n^{(i)}}\), together with the derivatives needed to pass to the
		divergence equation and oblique boundary condition, converge uniformly on
		compact subsets to those of \(\mathscr L_+\).  Since \(f\in\mathscr Y_\infty\), the
		first difference on the right of
		\eqref{eq:translated-difference-equation} tends to zero locally, together
		with the corresponding interior and boundary derivatives.  The second
		difference also tends to zero locally: the coefficient difference converges
		to zero, while \(\phi_n^{(2)}\) is uniformly bounded in
		\(C_b^{3+\alpha}\).
		
		The sequence \(v_n\) is bounded in \(C_b^{3+\alpha}\).  Local compact 	embedding, including the Dirichlet and oblique boundaries, therefore gives a	subsequence such that
		\begin{equation*}
			v_n\longrightarrow v\quad\hbox{in }C_{\rm loc}^{3}(\overline\Omega),
			\qquad {\mathscr L_+}v=0,\qquad v=0\ \hbox{on }\mathcal B .
		\end{equation*}
		The uniform H\"older bounds pass to the limit, so \(v\in\mathscr X_b\); in particular, it is bounded on the whole strip.  Lemma~\ref{lem:limit-invertibility}
		forces \(v=0\), whereas \eqref{eq:endpoint-separation} and the
		\(C^3_{\rm loc}\) convergence give \(|D^\nu v(0,p_*)|\ge\epsilon\).
		This contradiction proves our claim for every
		\(|\nu|\le3\).
		
		Let \(\ell_\nu(p)\) be the uniform limit of $D^\nu\phi(q,p)$.  If \(\nu_q\ge1\), then
		\(D^{\nu-(1,0)}\phi\) is bounded and its \(q\)-derivative tends uniformly to
		\(\ell_\nu\).  A nonzero value of \(\ell_\nu(p)\) would make
		\(D^{\nu-(1,0)}\phi(q,p)\) unbounded, after fixing
		\(p\) and integrating with respect to \(q\).   Hence
		\begin{equation}\label{eq:tangential-limits-zero}
			\ell_\nu\equiv0\qquad\text{whenever }\nu_q\ge1,\quad |\nu|\le3.
		\end{equation}
		Define \(\phi_+(p):=\ell_{(0,0)}(p)\).  Uniform convergence of the
		{$p$-derivatives and the fundamental theorem of calculus show that}
		\[
		\phi_+\in C^3([-m,0]),\qquad
		\partial_p^j\phi_+=\ell_{(0,j)},\quad 0\le j\le3.
		\]
		The uniform \(C_b^{3+\alpha}\) bound for \(\phi\) passes to the limit in
		the H\"older seminorm, so in fact \(\phi_+\in C^{3+\alpha}([-m,0])\).
	 {Let $f_{1,+}$ and $f_{2,+}$ denote the downstream limits of the interior and boundary data. Passing to the limit in the equations yields}
		\begin{equation*}
			(K_{p,+}^{-3}\phi_{+,p})_p=f_{1,+},\qquad
			-K_{p,+}(0)^{-3}\phi_{+,p}(0)+g\phi_+(0)=f_{2,+},\qquad
			\phi_+(-m)=0 .
		\end{equation*}
	{The Cauchy property proved above and \eqref{eq:tangential-limits-zero} give}
		\begin{equation}\label{eq:full-endpoint-convergence}
			\max_{i+j\le3}\sup_{-m\le p\le0}
			\left|\partial_q^i\partial_p^j
			\bigl(\phi(q,p)-\phi_+(p)\bigr)\right|
			\longrightarrow0\qquad(q\to+\infty).
		\end{equation}
		
		The same argument at \(-\infty\) produces
		\(\phi_-\in C^{3+\alpha}([-m,0])\) and the analogue of
		\eqref{eq:full-endpoint-convergence}. With the fixed cutoffs from
		{\eqref{eq:two-ended-space}, define the decaying remainder $\phi_0$ by}
		\[
		\phi_0:=\phi-\chi_-\phi_- -\chi_+\phi_+.
		\]
		Then \(\phi_0\in C_b^{3+\alpha}(\overline\Omega)\), and every derivative of
		\(\phi_0\) of order at most three tends uniformly to zero at both ends.
		Thus \(\phi_0\in C_b^{3+\alpha}\cap C_0^3\), proving
		\(\phi\in\mathscr X_\infty\).
		Finally,
		\(\mathscr L(\mathscr X_\infty)\subset\mathscr L(\mathscr X_b)\cap\mathscr Y_\infty\) is
		immediate.  Conversely, if an element of the intersection equals
		\(\mathscr L\phi\) for some \(\phi\in\mathscr X_b\), the result just proved puts
		\(\phi\) in \(\mathscr X_\infty\).  Taking the right-hand side to be zero gives the
		kernel identity in \eqref{eq:range-intersection}.	
	\end{proof}
	
	  \begin{lemma}[\cite{ChenWalshWheelerFronts,vol}]\label{lem:citelemmaa3}
	  {The elliptic operator $\mathscr L:\mathscr X_b\to\mathscr Y_b$ is locally proper if and only if both limiting operators $\mathscr L_\pm:\mathscr X_b\to\mathscr Y_b$ are injective.}
	  \end{lemma}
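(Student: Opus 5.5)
The plan follows the standard argument of Volpert and of Chen--Walsh--Wheeler, adapted to the divergence-form operator with oblique boundary condition on $\mathcal T$. Local properness of $\mathscr L:\mathscr X_b\to\mathscr Y_b$ means that preimages of compact sets intersected with closed bounded sets are compact. This is equivalent to the a priori estimate
\[
\|\varphi\|_{C_b^{3+\alpha}}\le C\bigl(\|\mathscr L\varphi\|_{\mathscr Y_b}+\|\varphi\|_{C^0([-M,M]\times[-m,0])}\bigr)
\]
for some $M$, combined with the global Agmon--Douglis--Nirenberg estimate
\[
\|\varphi\|_{C_b^{3+\alpha}}\le C(\|\mathscr L\varphi\|_{\mathscr Y_b}+\|\varphi\|_{C^0}).
\]
Both directions will be proved by translation and compactness.

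\emph{Sufficiency.} Assume $\mathscr L_\pm$ are injective on $\mathscr X_b$. Let $\varphi_n$ be bounded in $\mathscr X_b$ with $\mathscr L\varphi_n\to f$ in $\mathscr Y_b$. Since $\varphi_n$ is bounded in $C_b^{3+\alpha}$, a diagonal argument gives a subsequence converging in $C^3_{\rm loc}$ to some $\varphi$ with $\mathscr L\varphi=f$. Set $\psi_n=\varphi_n-\varphi$. If $\psi_n\not\to0$ in $C^0$, pick points $(q_n,p_n)$ with $|\psi_n(q_n,p_n)|\ge\epsilon$. Local convergence forces $|q_n|\to\infty$, say $q_n\to+\infty$.

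Translate by $q_n$. The coefficients of $\mathscr L$ converge locally to those of $\mathscr L_+$, because $h\in H+\mathscr X_\infty$, exactly as in the proof of Lemma~\ref{lem:range-regularity}. The right-hand sides $\mathscr L\psi_n=\mathscr L\varphi_n-f$ tend to zero uniformly. Hence the translates converge in $C^3_{\rm loc}$ to a bounded $v\in\mathscr X_b$ with $\mathscr L_+v=0$. As in Lemma~\ref{lem:limit-invertibility}, the Dirichlet condition on $\mathcal B$ and the gradient bound keep $p_*>-m$, so $v(0,p_*)\ne0$. This contradicts injectivity of $\mathscr L_+$. Thus $\psi_n\to0$ in $C^0$, and the Schauder estimate upgrades this to convergence in $C_b^{3+\alpha}$. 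Hence $\mathscr L$ is proper on closed bounded sets.

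\emph{Necessity.} Suppose $0\ne v\in\mathscr X_b$ satisfies $\mathscr L_+v=0$. Build $\varphi_n=\zeta_n(q)\,v(q-n,p)$, where $\zeta_n$ is a cutoff equal to $1$ on $[n-n/2,\,n+n/2]$ and supported in $[n/4,\,2n]$, with derivatives of order $O(1/n)$. The coefficients of $\mathscr L$ on the support of $\zeta_n$ are $o(1)$-close in $C^{2+\alpha}$ to those of $\mathscr L_+$, and the derivatives of the cutoff contribute $O(1/n)$. Hence $\mathscr L\varphi_n\to0$ in $\mathscr Y_b$, while $\|\varphi_n\|$ stays bounded and bounded below. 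Since $\varphi_n\to0$ locally uniformly, any convergent subsequence would have to converge to $0$, which is impossible. So the preimage of the compact set $\{0\}\cup\{\mathscr L\varphi_n\}$ within a bounded closed set is not compact, and $\mathscr L$ is not locally proper. The case of $\mathscr L_-$ is symmetric, using $n\to-\infty$.

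The main obstacle is the necessity construction. Since $v$ is merely bounded, not decaying, a Hölder-norm cutoff must not destroy the smallness of the commutator. Slowly varying cutoffs of width $\sim n$ handle this. One must also check that the boundary operator on $\mathcal T$ commutes with the cutoff up to $O(1/n)$ in $C^{2+\alpha}$, which holds because $\mathscr L_2$ is first order.
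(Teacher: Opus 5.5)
Your sufficiency argument is correct. It is the standard translation--compactness proof behind \cite{ChenWalshWheelerFronts,vol}, and the same device the paper uses in Lemmas~\ref{lem:limit-invertibility} and~\ref{lem:range-regularity}. It is also the only direction the paper needs, in Proposition~\ref{prop:fredholm}; the paper gives no proof of its own, only the citation.

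The necessity half rests on a step that fails in general: the claim that on $\operatorname{supp}\zeta_n$ the coefficients of $\mathscr L$ are $o(1)$-close in $C^{2+\alpha}$ to those of $\mathscr L_+$. By \eqref{eq:two-ended-space}, $w\in\mathscr X_\infty$ only gives $w=\chi_-w_-+\chi_+w_++w_0$ with $w_0\in C_b^{3+\alpha}\cap C_0^3$. So $D^jw_0\to0$ uniformly for $j\le3$, but the $\alpha$-H\"older seminorm of $D^3w_0$ on $\{q>\ell\}$ need not tend to zero. For example, take $w_0=(p+m)\sum_k\epsilon_k^{(3+\alpha)/\alpha}\Psi((q-k)\epsilon_k^{-1/\alpha})$ with $\Psi\in C_c^\infty$ a fixed bump and $\epsilon_k\to0$.

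The coefficients of $\mathscr L_1$ and $\mathscr L_2$ depend on $Dw$. Hence $E:=A(h)-\operatorname{diag}(K_{p,+}^{-1},K_{p,+}^{-3})$ is $o(1)$ in $C^2$ on the tail but only $O(1)$ in $C^{2+\alpha}$. The interior error $\operatorname{div}(E\nabla\varphi_n)$ must be small in $C^{1+\alpha}$. It contains $(\operatorname{div}E)\cdot\nabla\varphi_n$, whose $C^{1+\alpha}$ norm includes contributions of size $[D^2E]_{\alpha}\,|\nabla v|$ that do not decay. The boundary error, measured in $C^{2+\alpha}(\mathcal T)$, has the same defect.

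So you only get $\mathscr L\varphi_n\to0$ in $C^1\times C^2$. Since $\mathscr L\varphi_n$ is supported in $[n/4,2n]$, the set $\{\mathscr L\varphi_n\}$ is precompact in $\mathscr Y_b$ only if $\mathscr L\varphi_n\to0$ in $\mathscr Y_b$, and non-properness does not follow. The cutoff commutator you flagged as the main obstacle is in fact harmless: $\mathscr L_{+,2}$ contains no $\partial_q$, so its commutator with $\zeta_n(q)$ vanishes.

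Your construction does go through whenever $w$ has one extra derivative. At zeros of $\mathcal F$, Lemma~\ref{lem:regularity} gives $w\in C_b^{4+\alpha}$. Interpolating $\sup_{q>\ell}|D^3(w-w_+)|\to0$ against the bound on $D^4w$ then gives $\|w-w_+\|_{C^{3+\alpha}(\{q>\ell\})}\to0$, which is exactly what your estimate needs.

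For general $(\beta,w)\in\mathscr U$ you need another device. One option is to replace $v(\cdot-n,\cdot)$ on a long box by an exact local solution $v(\cdot-n,\cdot)+\chi_n$ of $\mathscr L$. Here $\chi_n$ solves the box problem with data $-(\mathscr L-\mathscr L_+)v(\cdot-n,\cdot)$; this data is small in $H^{-1}$, where $\mathscr L-\mathscr L_+$ is small because $\|E\|_{C^0}\to0$. Choose the box length so that the separable box problem for $\mathscr L_+$ is invertible. Flat-boundary Schauder estimates then bound $\chi_n$ in $C^{3+\alpha}$ away from the vertical sides and make it small in $C^0$. After cutting off, only the $O(1/N)$ commutator remains. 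Alternatively, build decay of the top-order H\"older seminorm of $w_0$ into the definition of $C_\infty^{3+\alpha}$.
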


	\begin{prop}[Fredholm property]\label{prop:fredholm}
	For $\sigma_0^\pm<0$, the operator
		\[
		\mathscr L:\mathscr X_\infty\longrightarrow\mathscr Y_\infty
		\]
		is Fredholm of index zero.
	\end{prop}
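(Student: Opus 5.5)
The plan is to first establish the Fredholm property on the bounded spaces $\mathscr X_b\to\mathscr Y_b$, and then transfer it to the two-ended spaces using Lemma~\ref{lem:range-regularity}. By Lemma~\ref{lem:limit-invertibility}, both limiting operators $\mathscr L_\pm:\mathscr X_b\to\mathscr Y_b$ are isomorphisms; in particular they are injective. Lemma~\ref{lem:citelemmaa3} then shows that $\mathscr L:\mathscr X_b\to\mathscr Y_b$ is locally proper, so it has finite-dimensional kernel and closed range. To obtain a finite-dimensional cokernel, I would use the standard parametrix construction. Take a partition of unity in $q$ with pieces $\chi_-$, $\chi_+$, and a compactly supported middle piece. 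Define $\mathscr P:=\chi_-\mathscr L_-^{-1}\tilde\chi_-+\chi_+\mathscr L_+^{-1}\tilde\chi_+ +\chi_0 \mathscr L_0^{-1}\tilde\chi_0$, where $\mathscr L_0$ is an invertible operator agreeing with $\mathscr L$ on a large compact set (for instance, $\mathscr L$ shifted by $-\tau$ there). Since the coefficients of $\mathscr L$ converge to those of $\mathscr L_\pm$ in $C^{2+\alpha}$ at the ends, $\mathscr L\mathscr P-I$ is a small operator plus a compact one. Alternatively, following~\cite{vol}, the Fredholm property follows directly once the limiting operators are invertible.

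For the index, I would use the two homotopies announced before Lemma~\ref{lem:limit-invertibility}. The first deforms the coefficients of $\mathscr L$ linearly to the operator with the upstream laminar coefficients $K_-$ at $-\infty$ and the downstream coefficients $K_+$ at $+\infty$. A cleaner route is to deform the whole state $(\beta,w)$ back to $(0,0)$ along a path of laminar-type profiles, keeping both limiting Froude numbers above one. Along such a path the limiting principal eigenvalues stay negative, so by Lemma~\ref{lem:limit-invertibility} the limiting operators stay injective. Local properness is preserved along the path, and the Fredholm index is constant under continuous deformations within the Fredholm class. The second homotopy takes the constant-coefficient laminar operator $\mathcal L$ to $\mathscr L_{-,t}$ with $t=1$. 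For every $t\in[0,1]$ this is an isomorphism $\mathscr X_b\to\mathscr Y_b$ by Lemma~\ref{lem:limit-invertibility}, so the index is zero. Concretely, $\mathcal L$ itself is $\mathscr L_-$ for $\beta=0$ and is therefore invertible with index zero. The index of $\mathscr L$ on $\mathscr X_b$ is consequently zero.

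To transfer the result to $\mathscr X_\infty\to\mathscr Y_\infty$, Lemma~\ref{lem:range-regularity} gives equal kernels and $\mathscr L(\mathscr X_\infty)=\mathscr L(\mathscr X_b)\cap\mathscr Y_\infty$. The range is therefore closed in $\mathscr Y_\infty$, because the $\mathscr Y_\infty$-norm is equivalent to the inherited $\mathscr Y_b$-norm. The codimension in $\mathscr Y_\infty$ is at most the codimension in $\mathscr Y_b$, so $\mathscr L$ is Fredholm on the two-ended spaces.

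It remains to show that the index is still zero there, and I expect this to be the main obstacle, because $\mathscr Y_\infty$ is a proper closed subspace of $\mathscr Y_b$ and the codimension could in principle drop. The approach I would try first is to rerun the homotopy argument directly on $\mathscr X_\infty\to\mathscr Y_\infty$. Each operator along the path is Fredholm there by the argument just given, so the index is constant along the path. At the endpoint $\mathcal L$ (or $\mathscr L_{-,1}$), we have an isomorphism on $\mathscr X_b$, and by Lemma~\ref{lem:range-regularity} its inverse maps $\mathscr Y_\infty$ into $\mathscr X_\infty$. The endpoint is therefore also an isomorphism on the two-ended spaces, which gives index zero. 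The delicate points are continuity of the path in the operator norm on $\mathscr X_\infty\to\mathscr Y_\infty$ and verifying the hypothesis $\sigma_0^\pm<0$ uniformly along it. I would handle both by choosing the path through $(s\beta, sw)$, $s\in[0,1]$, restricted to a connected set where the endpoint Froude numbers remain larger than one. If that fails, I would instead interpolate the coefficient matrices and keep their limits fixed at $K_\pm$.
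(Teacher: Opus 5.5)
Your proof computes the index by a genuinely different homotopy from the paper's. It is sound once the step you left hedged is filled in.

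\textbf{The paper's route.} The paper never moves the limiting profiles. It deforms $\mathscr L$ to $(\mathscr L_1-\tau,\mathscr L_2)$ and then linearly to $(\Delta-\tau,-\partial_{\mathbf n})$. Using Rayleigh quotients with $t\tau$, $b_t^\pm$, $c_t^\pm$, $(1-t)g$, it checks that all limiting transverse eigenvalues stay negative. Hence every operator on the path is upper semi-Fredholm by Lemma~\ref{lem:citelemmaa3}. It then uses invariance of the extended index and Lemma~\ref{lem:range-regularity}, essentially as in your last paragraph.

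\textbf{Your route and the missing step.} You use the single segment $s\mapsto\mathcal F_w(0,sw)$, $s\in[0,1]$, ending at $\mathcal L$; note that $\mathcal F_w$ does not depend on $\beta$. Continuity is not an issue. Since $\inf(H_p+sw_p)\ge\min\{\inf H_p,\inf(H_p+w_p)\}>0$, the segment lies in $\mathscr U$, and analyticity of $\mathcal F$ gives operator-norm continuity on the two-ended spaces.

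What is missing is a proof that $\sigma_0^\pm<0$ for every $s$. Restricting to ``a connected set'' where this holds does not help unless that set is all of $[0,1]$. Your fallback, interpolating coefficients with limits fixed at $K_\pm$, never reaches an operator of known index when $K_+\ne K_-$.

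The fix is short. The Cauchy--Schwarz argument of Lemma~\ref{lem:trace} and of the sign lemma for $\sigma_0$ never uses that $K$ is laminar. For any profile with $K_p>0$ it shows that $\sigma_0(K)<0$ if and only if $g\int_{-m}^0K_p^3\,dp<1$. Along your path the limiting profiles satisfy $K_p^s=(1-s)H_p+sK_{p,\pm}>0$. Convexity of $t\mapsto t^3$ then gives $g\int_{-m}^0(K_p^s)^3\,dp\le(1-s)F^{-2}+s\,g\int_{-m}^0K_{p,\pm}^3\,dp<1$, uniformly in $s$. These profiles are not of the form $H(\cdot;\mu)$, so ``laminar-type'' is misleading. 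Lemmas~\ref{lem:limit-invertibility} and~\ref{lem:range-regularity} do not need that, however: the comparison function $\Phi[K]$ and the Lax--Milgram coercivity only use $g\int K_p^3<1$.

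\textbf{What each route buys.} With the gap filled, every operator on your path has the same form $\mathcal F_w(\cdot)$, so Lemmas~\ref{lem:limit-invertibility} and~\ref{lem:range-regularity} apply verbatim. Your endpoint is Proposition~\ref{prop:f0inverse}, whose proof does not use the present proposition. The paper instead has to rerun the invertibility argument for modified coefficients along two homotopies. In return, the paper's route does not depend on the convexity of this particular Froude functional.

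\textbf{Simplifications.} You can drop the parametrix paragraph, which is not quite right as written. Membership in $\mathscr X_\infty$ gives uniform decay at the ends of the derivatives of $w-w_\pm$ up to order three, but not of their H\"older seminorms. So the coefficients need not converge in $C^{2+\alpha}$. You already have upper semi-Fredholmness along the path from Lemmas~\ref{lem:citelemmaa3} and~\ref{lem:range-regularity}. Together with invariance of the extended index, this gives index zero and hence finite codimension. The separate $\mathscr X_b$ computation and the second homotopy to $\mathscr L_{-,1}$ are likewise unnecessary, since $\mathcal L$ is already an isomorphism on both pairs of spaces.
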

	\begin{proof}
		{Since $h=H+w$ with $w\in\mathscr X_\infty$, the coefficients of $\mathscr L$ converge at the two ends to those of $\mathscr L_\pm$. Lemma~\ref{lem:limit-invertibility} makes both limiting operators isomorphisms on the bounded H\"older spaces. Lemma~\ref{lem:citelemmaa3} therefore implies that}
		\begin{equation*}
			\mathscr{L}:\mathscr{X}_b\to\mathscr{Y}_b
		\end{equation*}
		{is locally proper, or equivalently upper semi-Fredholm.}

		 {We first compute the index on $\mathscr X_b$. Let $\tau>0$ be a constant to be chosen below, and define the first homotopy by}
	\begin{equation*}
		\mathcal H_t:=({\mathscr L_1}-t\tau,{\mathscr L_2}),
		\qquad0\le t\le1 ,
	\end{equation*}
	{whose limiting transverse principal eigenvalues, denoted by $\sigma_{0,t}^\pm$, satisfy}
\begin{equation*}
	\sigma_{0,t}^\pm=
	\sup_{\substack{\varphi\in H^1(-m,0)\\
			\varphi(-m)=0,\ \varphi\not\equiv0}}
	\frac{g\varphi(0)^2-
		\displaystyle\int_{-m}^0K_{p,\pm}^{-3}\varphi_p^2d p-
		t\tau\displaystyle\int_{-m}^0\varphi^2d p}
	{\displaystyle\int_{-m}^0K_{p,\pm}^{-1}\varphi^2d p}
	\le\sigma_0^\pm<0.
\end{equation*}
	{Let $\mathcal H_{t,\pm}$ denote the limiting operators of $\mathcal H_t$. The proof of Lemma~\ref{lem:limit-invertibility} shows that they are isomorphisms from $\mathscr X_b$ to $\mathscr Y_b$, with inverse bounds uniform in $t$. Lemma~\ref{lem:citelemmaa3} then implies that}
	\begin{equation*}
		\mathcal{H}_t:\mathscr{X}_b\to\mathscr{Y}_b
	\end{equation*}
	is upper semi-Fredholm.
	
	{Next, with $\partial_{\boldsymbol n}$ denoting outward normal differentiation, define a second homotopy, beginning at $\mathcal H_1$, by}
	\begin{equation*}
		\mathcal M_t:=
		\bigl(t\Delta+(1-t){\mathscr L_1}-\tau,\,
		-t\partial_{\boldsymbol n}+(1-t){\mathscr L_2}\bigr),
		\qquad0\le t\le1 .
	\end{equation*}
	{The operators remain uniformly elliptic and uniformly oblique. Denote their limiting transverse principal eigenvalues by $\widehat\sigma_{0,t}^\pm$. For each sign, let $b_t^\pm$ and $c_t^\pm$ denote the convex combinations of the transverse and longitudinal leading coefficients, respectively, defined below. Then the eigenvalues are given by the Rayleigh quotient}
	\begin{equation}\label{eq:second-homotopy-spectrum}
	          	\widehat\sigma_{0,t}^\pm=
	          	\sup_{\substack{\varphi\in H^1(-m,0)\\
	          			\varphi(-m)=0,\ \varphi\not\equiv0}}
	          	\frac{(1-t)g\varphi(0)^2-
	          		\displaystyle\int_{-m}^0b_t^\pm\varphi_p^2d p-
	          		\tau\displaystyle\int_{-m}^0\varphi^2d p}
	          	{\displaystyle\int_{-m}^0c_t^\pm\varphi^2d p},
	\end{equation}
	where \begin{equation*}
		b_t^{\pm}:=t+(1-t)K_{p,\pm}^{-3},\qquad c_t^{\pm}:=t+(1-t)K_{p,\pm}^{-1}.
	\end{equation*}
	 {For $\tau>0$ sufficiently large, the variational formula \eqref{eq:second-homotopy-spectrum} gives, for some $\kappa>0$, the uniform bound}
	\begin{equation*}
		\hat{\sigma}_{0,t}^{\pm} \leq -\kappa < 0\quad\mathrm{for~all~}t\in[0,1].
	\end{equation*}
	  	{Let $\mathcal M_{t,\pm}$ denote the two limiting operators of $\mathcal M_t$.}
	{The proof of Lemma \ref{lem:limit-invertibility} applies verbatim to $\mathcal M_{t,\pm}$ after replacing $K_{p,\pm}^{-3}$, $K_{p,\pm}^{-1}$, and $g$ by $b_t^\pm$, $c_t^\pm$, and $(1-t)g$, respectively. Indeed, these coefficients are uniformly elliptic, the displayed spectral bound supplies the same positive strict supersolution and injectivity argument, and the finite-cylinder Lax--Milgram and Schauder construction supplies surjectivity with bounds uniform in $t$. Thus $\mathcal M_{t,\pm}:\mathscr X_b\to\mathscr Y_b$ is an isomorphism uniformly in $t$.} Moreover, Lemma \ref{lem:citelemmaa3} implies
	  \begin{equation*}
	  	\mathcal{M}_t:\mathscr{X}_b\to\mathscr{Y}_b
	  \end{equation*}
	  is upper semi-Fredholm.
	
	At $t=1$, we have
	\begin{equation*}
		\mathcal{M}_1\phi=(\Delta\phi-\tau\phi,-\partial_\mathbf{n}\phi)
	\end{equation*}
	with zero Dirichlet condition on $\mathcal{B}$.  Its limiting transverse principal eigenvalue is strictly negative, so Lemma    \ref{lem:limit-invertibility}      makes $\mathcal{M}_1:\mathscr{X}_b\to\mathscr{Y}_b$
	 an isomorphism. By homotopy invariance of the extended index on the upper semi-Fredholm operators, we can deduce that
	\begin{equation*}
		\mathrm{ind}(\mathscr{L}|_{\mathscr{X}_b})=\mathrm{ind} (\mathcal{M}_1)=0.
	\end{equation*}
	In particular, {all operators along the two homotopies are Fredholm.}
	
	{We now restrict the operators to $\mathscr X_\infty$. Let $A$ denote any operator on either homotopy path. Its coefficients converge at both ends, and its limiting operators are invertible. The argument of Lemma~\ref{lem:range-regularity} therefore gives}
	\begin{equation*}
		A(\mathscr{X}_\infty)=A(\mathscr{X}_b)\cap\mathscr{Y}_\infty,\qquad\mathrm{ker}(A|_{\mathscr{X}_\infty})=\mathrm{ker}(A|_{\mathscr{X}_b}).
	\end{equation*}
	{Since $A:\mathscr X_b\to\mathscr Y_b$ is Fredholm, its range is closed and its kernel is finite-dimensional. As $\mathscr Y_\infty$ is a closed subspace of $\mathscr Y_b$, these identities imply that}
	\begin{equation*}
		A:\mathscr{X}_\infty\to\mathscr{Y}_\infty
	\end{equation*}
	{is upper semi-Fredholm. Finally, $\mathcal M_1$ is onto on the bounded spaces, and Lemma~\ref{lem:range-regularity} shows that the solution of $\mathcal M_1\phi=f$ belongs to $\mathscr X_\infty$ whenever $f\in\mathscr Y_\infty$. Thus $\mathcal M_1$ is also an isomorphism on the two-ended spaces. Index invariance now gives}
	\begin{equation*}
		\mathrm{ind}(\mathscr{L}|_{\mathscr{X}_\infty})=\mathrm{ind}(\mathcal{M}_1|_{\mathscr{X}_\infty})=0,
	\end{equation*}
	which completes the proof.
	\end{proof}

	\subsection{{Small-amplitude fronts}}
	{In particular, the laminar linearization $\mathcal L:=\mathcal F_w(0,0)$ is invertible on the two-ended spaces.}
	\begin{prop}\label{prop:f0inverse}
		For the supercritical flow,
		$\mathcal{L}:\mathscr{X}_\infty\to\mathscr{Y}_\infty$
		is an isomorphism.
	\end{prop}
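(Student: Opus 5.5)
Since $\mathcal L=\mathcal F_w(0,0)$ is the linearization at the trivial laminar state, its coefficients do not depend on $q$. Both limiting operators therefore coincide with $\mathcal L$ itself, with $K_{p,\pm}=H_p$, and the transverse principal eigenvalue is $\sigma_0^\pm=\sigma_0(H)$. The upstream flow is supercritical, $F(\lambda)>1$, so the lemma on the sign of the principal eigenvalue gives $\sigma_0(H)<0$. Lemma~\ref{lem:limit-invertibility} (with $K_+=K_-=H$) then shows that $\mathcal L:\mathscr X_b\to\mathscr Y_b$ is an isomorphism. The plan is to transfer this isomorphism to the two-ended spaces. Proposition~\ref{prop:fredholm} already says that $\mathcal L:\mathscr X_\infty\to\mathscr Y_\infty$ is Fredholm of index zero, so it suffices to prove injectivity. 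I will nevertheless also check surjectivity directly, which avoids relying on the index computation.

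\emph{Injectivity.} Take $\varphi\in\mathscr X_\infty$ with $\mathcal L\varphi=0$. Since $\mathscr X_\infty\subset\mathscr X_b$, injectivity of $\mathcal L$ on $\mathscr X_b$ gives $\varphi=0$. The same conclusion follows from the kernel identity in \eqref{eq:range-intersection} of Lemma~\ref{lem:range-regularity}.

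\emph{Surjectivity.} Fix $f\in\mathscr Y_\infty\subset\mathscr Y_b$. By Lemma~\ref{lem:limit-invertibility} there is a $\varphi\in\mathscr X_b$ with $\mathcal L\varphi=f$. Lemma~\ref{lem:range-regularity} applies here, since its hypothesis $\sigma_0^\pm<0$ holds, and it places $\varphi$ in $\mathscr X_\infty$. Concretely, $\varphi$ has endpoint profiles $\varphi_\pm$ solving the transverse ODE problems with data $f_{1,\pm}$ and $f_{2,\pm}$, and the remainder decays at both ends. Hence $\mathcal L$ is a bijection.

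\emph{Boundedness of the inverse.} The inverse is bounded by the open mapping theorem. Alternatively, one can use the equivalence of the two-ended norm with the inherited $C_b^{3+\alpha}$-norm noted after \eqref{eq:two-ended-space}, together with the a priori estimate \eqref{eq:limit-apriori} at $t=0$.

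The only point needing any care is that Lemma~\ref{lem:range-regularity} was phrased for $\mathscr L=\mathcal F_w(\beta,w)$ at a general point. At $(0,0)$ the translated coefficients are literally constant in $q$, so the coefficient-difference term in \eqref{eq:translated-difference-equation} vanishes identically and the argument goes through verbatim. I therefore expect no real obstacle beyond correctly quoting the earlier lemmas.
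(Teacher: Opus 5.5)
Your proposal is correct and follows the paper's proof essentially verbatim. At $(0,0)$ both limiting operators coincide with $\mathcal L$, Lemma~\ref{lem:limit-invertibility} gives invertibility $\mathscr X_b\to\mathscr Y_b$, and Lemma~\ref{lem:range-regularity} places the solution of $\mathcal L\varphi=f$ with $f\in\mathscr Y_\infty$ in $\mathscr X_\infty$; your extra remarks (injectivity by inclusion, bounded inverse via the open mapping theorem) only make explicit what the paper leaves implicit.
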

	{
	\begin{proof}
	At $(\beta,w)=(0,0)$, the two limiting operators coincide with $\mathcal L$. Lemma \ref{lem:limit-invertibility} therefore makes $\mathcal L:\mathscr X_b\to\mathscr Y_b$ an isomorphism. If $f\in\mathscr Y_\infty$, Lemma \ref{lem:range-regularity} shows that the unique solution in $\mathscr X_b$ belongs to $\mathscr X_\infty$. Hence $\mathcal L:\mathscr X_\infty\to\mathscr Y_\infty$ is an isomorphism.
	\end{proof}}
	
	{We can now construct the local curve of fronts in $\R\times\mathscr X_\infty$.}
\begin{theorem}[Small-amplitude pressure fronts]\label{thm:local}
	{Suppose that $\gamma\in C^{4+\alpha}([0,m])$, the upstream Froude number satisfies $F>1$, and Assumption~\ref{ass:pressure} holds.}  There are \(\varepsilon>0\), a neighborhood
	\(\mathcal V\subset\mathscr X_\infty\) of~ \(w=0\), and a unique real-analytic mapping
	\[
	(-\varepsilon,\varepsilon)\ni\beta\longmapsto w(\beta)\in\mathcal V
	\]
	{satisfying}
	\[
	\mathcal F(\beta,w(\beta))=0,\qquad w(0)=0.
	\]
	The height \(h(\beta)=H+w(\beta)\) has the limits
	\[
	h(\beta)(q,\cdot)\longrightarrow
	\begin{cases}
		H(\cdot;\lambda),&q\to-\infty,\\
		H(\cdot;\lambda_+(\beta)),&q\to+\infty,
	\end{cases}
	\]
	in \(C^3([-m,0])\).  For \(0<|\beta|<\varepsilon\),
	\begin{equation*}
		\mathrm{sgn}(\beta)\,h_q(\beta)>0
		\quad\text{in }\Omega\cup\mathcal T,
		\qquad
		\mathrm{sgn}(\beta)\,\eta_q(\beta)>0.
	\end{equation*}
	Thus \(\beta>0\) gives a front of elevation and \(\beta<0\) a front of
	depression. Moreover, their depth jump satisfies
	\begin{equation}\label{eq:local-depth-expansion}
		d_+(\beta)-d
		=\frac{\beta}{g(F^2-1)}+O(\beta^2).
	\end{equation}
\end{theorem}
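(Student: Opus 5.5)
The plan is to apply the analytic implicit function theorem to $\mathcal F:\mathscr U\to\mathscr Y_\infty$ at $(0,0)$. At $\beta=0$ the laminar state $w=0$ is a zero, since $H=H(\cdot;\lambda)$ satisfies the interior equation and the top condition $\tfrac{1}{2H_p^2(0)}+g\cdot 0-\tfrac\lambda2=0$. Lemma~\ref{lemma:analytic-map} makes $\mathcal F$ real analytic. Proposition~\ref{prop:f0inverse} makes $\mathcal L=\mathcal F_w(0,0)$ an isomorphism $\mathscr X_\infty\to\mathscr Y_\infty$. The analytic IFT therefore gives $\varepsilon$, the neighborhood $\mathcal V$, and the unique analytic curve $\beta\mapsto w(\beta)$. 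Shrinking $\varepsilon$ keeps $\beta<\beta_{\rm cr}$ and $\inf(H_p+w_p)>0$. Because the curve is continuous from $(0,0)$ and the ends remain supercritical for small $\beta$ (note $F_+(\beta)\to F>1$), Lemma~\ref{lemma:analytic-map} identifies the endpoint profiles as $H(\cdot;\lambda)$ and $H(\cdot;\lambda_+(\beta))$. The convergence in $C^3([-m,0])$ is built into the definition of $C_\infty^{3+\alpha}$. The depth expansion \eqref{eq:local-depth-expansion} follows by Taylor expanding $d_+(\beta)$ at $0$, using $d_+(0)=d$ and \eqref{eq:depth-derivative}.

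For monotonicity, differentiate in $\beta$. Set $\dot w=\partial_\beta w(\beta)|_{\beta=0}\in\mathscr X_\infty$. Differentiating $\mathcal F(\beta,w(\beta))=0$ gives $\mathcal L\dot w=(0,-r)$. Now differentiate in $q$ (one $q$-derivative keeps us in $C^{2+\alpha}$, which is enough for the maximum principle). The function $u:=\dot w_q$ lies in $C_b^2\cap C_0$, since $\dot w\in C_\infty^{3+\alpha}$ and its $q$-derivatives vanish at both ends. Because $\mathcal L$ has $q$-independent coefficients, $u$ satisfies $\mathcal L_1u=0$ in $\Omega$, $\mathcal L_2u=-r'<0$ on $\mathcal T$, and $u=0$ on $\mathcal B$. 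Lemma~\ref{lem:comparison-function} applies with $A=\mathrm{diag}(H_p^{-1},H_p^{-3})$ and the comparison function $\Phi[H]$ from \eqref{eq:explicit-comparison}, which satisfies the required strict inequalities because $F>1$. It yields $u>0$ on $\Omega\cup\mathcal T$.

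The main obstacle is passing from $\dot w_q>0$ to $\mathrm{sgn}(\beta)w_q(\beta)>0$. The perturbation $w_q(\beta)=\beta\dot w_q+O(\beta^2)$ in $C^{2+\alpha}_b$ does not give positivity near $q=\pm\infty$, where $\dot w_q\to0$. I would therefore apply the maximum principle directly at $\beta\ne0$ rather than perturb. Differentiating the nonlinear equation $\mathcal F(\beta,w(\beta))=0$ in $q$ shows that $u_\beta:=h_q=w_q$ solves $\mathscr L u_\beta=(0,-\beta r')$, where $\mathscr L=\mathcal F_w(\beta,w(\beta))$ is in divergence form with an oblique boundary operator. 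Moreover $u_\beta\in C_b^2\cap C_0$ and $u_\beta=0$ on $\mathcal B$. I would use $\Phi[H]$ as the comparison function for $\mathscr L$: since $w(\beta)\to0$ in $C^{3+\alpha}_b$, one has $\mathscr L_1\Phi\le -c_1/2$ and $\mathscr L_2\Phi\le -c_1/2$ for small $|\beta|$. The only change from Lemma~\ref{lem:comparison-function} is a mixed term $w_q\Phi_p/(H_p+w_p)^2$ in the boundary operator, and its $C^0$ norm is $O(\beta)$. The quotient argument of that lemma goes through verbatim for this non-diagonal symmetric $A$, giving $\mathrm{sgn}(\beta)h_q>0$ on $\Omega\cup\mathcal T$. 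Restricting to $\mathcal T$ gives $\eta_q=h_q(\cdot,0)$, so $\mathrm{sgn}(\beta)\eta_q>0$. This settles the elevation and depression cases, which are consistent with the sign of $d_+(\beta)-d$.
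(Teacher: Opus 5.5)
Your proposal is correct and is essentially the paper's proof: the analytic implicit function theorem via Proposition~\ref{prop:f0inverse}, endpoint identification via Lemma~\ref{lemma:analytic-map}, the depth expansion from \eqref{eq:depth-derivative}, and monotonicity by applying Lemma~\ref{lem:comparison-function} directly to $z=h_q$ at $\beta\neq0$, with $A=A(h(\beta))$, $f=-\beta r'$, and $\Phi[H]$ kept a strict supersolution by continuity. The preliminary computation of $\dot w_q$ is unnecessary, and your remark about a mixed term is slightly off: since $\Phi[H]$ depends only on $p$, the mixed coefficient multiplies $\Phi_q=0$, so the $O(\beta)$ change in the boundary inequality comes from the $A_{22}$ coefficient; neither point affects the argument.
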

	\begin{proof}
		{Proposition~\ref{prop:f0inverse} and the analytic implicit function theorem~\cite[Theorem 4.5.4]{BuffoniToland} give the local solution curve. Its endpoint profiles follow from Lemma~\ref{lemma:analytic-map}, and \eqref{eq:depth-derivative} gives the depth expansion \eqref{eq:local-depth-expansion}.}
		
		{To prove monotonicity, let $z=h_q$ be the horizontal derivative of the height function, and let $A(h)$ denote the principal coefficient matrix of the linearized interior equation. Differentiating \eqref{heighteq} with respect to $q$ gives}
		\begin{align*}
			\operatorname{div}(A(h)\nabla z)&=0&&\text{in }\Omega,\\
			-\mathbf n\cdot A(h)\nabla z+gz&=-\beta r'(q)&&\text{on }\mathcal T,\\
			z&=0&&\text{on }\mathcal B,
		\end{align*}
	where \begin{equation}\label{eq:principal-matrix}
		 \mathbb M_2(C_b^{2+\alpha}(\overline\Omega))\ni A(h):=
		\begin{pmatrix}
			h_p^{-1}&-h_qh_p^{-2}\\
			-h_qh_p^{-2}&(1+h_q^2)h_p^{-3}
		\end{pmatrix}.
	\end{equation}
	{Let $\Phi_H$ denote the comparison function for $H$:}
	\begin{equation*}
	\Phi_H:=\Phi[H](p)=1+C_1\int_{-m}^pH_p^3(t)dt-C_2\left(\int_{-m}^pH_p^3(t)\right)^2
	\end{equation*}
	{By \eqref{eq:explicit-comparison}, after reducing $\varepsilon>0$ if necessary, continuity gives the strict inequalities}
	\[
	\operatorname{div}(A(h(\beta))\nabla\Phi_H)<0,
	\qquad
	-\mathbf n\cdot A(h(\beta))\nabla\Phi_H+g\Phi_H<0.
	\]
{For $\beta>0$, Lemma~\ref{lem:comparison-function}, applied with $u=z$ and $f=-\beta r'<0$, yields $h_q>0$ in $\Omega\cup\mathcal T$. For $\beta<0$, applying the same lemma to $u=-z$ yields $h_q<0$ in $\Omega\cup\mathcal T$.}
	\end{proof}

	\section{{Global monotonicity}}\label{mono}
		{We now show that the monotonicity obtained locally persists under continuation. For a solution $w$ of \eqref{wpde}, the \emph{depression properties} are}
	\begin{equation}\label{nodaldep}
		w_q<0\quad\mathrm{in~}\Omega\cup \mathcal T,
	\end{equation}
	and \begin{equation*}
		w_{qp}<0\quad\mathrm{on~}\mathcal B.
	\end{equation*}
	
	{Similarly, the \emph{elevation properties} are}
	\begin{equation}\label{nodalele}
		w_q>0\quad\mathrm{in~}\Omega\cup \mathcal T,
	\end{equation}
	and
	\begin{equation*}
		w_{qp}>0\quad\mathrm{on~}\mathcal B.
	\end{equation*}
	{Because $w_q$ tends to zero at both ends of the strip, strict monotonicity alone does not immediately imply openness. The next lemma combines a bounded-region argument with comparison functions on the two tails.}
	
	\begin{lemma}[Open condition]\label{lem:open}
		{In the supercritical setting above, fix a solution $(\beta^*,w^*)\in\mathcal F^{-1}(0)$ and write $h^*=H+w^*$.}
	
		(a) If \(\beta^* > 0\) and
		\[
		h_q^* > 0 \quad \text{in } \Omega \cup \mathcal{T},
		\]
		then there exists \(\epsilon > 0\) such that every solution {$(\beta,w)\in\mathcal F^{-1}(0)$} satisfying
		\[
		\|w - w^*\|_{C_b^3(\bar{\Omega})} + |\beta - \beta^*| < \epsilon, \quad \mathrm{with~}\beta > 0,
		\]
		also satisfies
		\[
		h_q > 0 \quad \text{in } \Omega \cup \mathcal{T}.
		\]
		
		(b) If \(\beta^* < 0\) and
		\[
		h_q^* < 0 \quad \text{in } \Omega \cup \mathcal{T},
		\]
		then there exists \(\epsilon > 0\) such that every solution {$(\beta,w)\in\mathcal F^{-1}(0)$} satisfying
		\[
		\|w - w^*\|_{C_b^3(\bar{\Omega})} + |\beta - \beta^*| < \epsilon, \quad\mathrm{with~} \beta < 0,
		\]
		also satisfies
		\[
		h_q < 0 \quad \text{in } \Omega \cup \mathcal{T}.
		\]
	\end{lemma}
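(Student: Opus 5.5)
The plan is to prove (a); (b) follows by applying the same argument to $-z$. For any solution $(\beta,w)$ we set $z:=h_q=w_q$. As in the proof of Theorem~\ref{thm:local}, $z$ satisfies
\[
\operatorname{div}(A(h)\nabla z)=0\ \text{in }\Omega,\qquad -\mathbf n\cdot A(h)\nabla z+gz=-\beta r'(q)<0\ \text{on }\mathcal T,\qquad z=0\ \text{on }\mathcal B,
\]
with $A(h)$ as in \eqref{eq:principal-matrix}, and $z\in C_b^2(\overline\Omega)\cap C_0(\overline\Omega)$ because $w\in\mathscr X_\infty$. The difficulty is that $z\to0$ at both ends, so $z>0$ does not follow from a uniform lower bound. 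Instead, we will show directly that $z\ge0$ by a quotient maximum principle that is localized in the tails.

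\textbf{Step 1 (comparison functions on the tails).} The coefficients $A(h^*)$ converge as $q\to\pm\infty$ to the laminar matrices $\operatorname{diag}(K_{p,\pm}^{-1},K_{p,\pm}^{-3})$. Both endpoint states are supercritical, since $\beta^*<\beta_{\rm cr}$ gives $F_+(\beta^*)>1$ by Lemma~\ref{lem:downstream-state}. Hence the functions $\Phi_\pm:=\Phi[K_\pm]$ from \eqref{eq:explicit-comparison} are strict supersolutions for the limiting operators, with margin $c_*>0$. We choose $M$ so large that for $\pm q\ge M$ the coefficients $A(h^*)$ lie within a small distance $\delta$ of their limits in $C^1$. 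Then for $\pm q\ge M$, $\Phi_\pm$ satisfies $\operatorname{div}(A(h)\nabla\Phi_\pm)\le -c_*/2$ and $-\mathbf n\cdot A(h)\nabla\Phi_\pm+g\Phi_\pm\le -c_*/2$. The same inequalities hold for every nearby solution with $\|w-w^*\|_{C^3_b}+|\beta-\beta^*|<\epsilon$. This needs $\lambda_+(\beta)$ to depend continuously on $\beta$, so that $K_+$ moves only slightly. Here we use the $C_b^3$ closeness of $w$ to $w^*$ on the whole strip, not only locally.

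\textbf{Step 2 (compact middle region).} On $[-M,M]\times[-m,0]$ we have $z^*>0$ on $\Omega\cup\mathcal T$. Hopf's lemma applied to $z^*$ at $\mathcal B$ gives $z^*_p>0$ on $\mathcal B$; the elevation property $w^*_{qp}>0$ can be recovered this way if it is not already assumed. By compactness there are constants $a,b>0$ with $z^*\ge a$ on $[-M,M]\times[-m/2,0]$ and $z^*_p\ge b$ on $[-M,M]\times[-m,-m/2]$. These bounds are stable under $C^2$ perturbations, so for $\epsilon$ small, $z>0$ on $[-M,M]\times(-m,0]$; in particular $z>0$ on the vertical segments $\{q=\pm M\}$.

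\textbf{Step 3 (tails).} On the half-strip $\{q\ge M\}$ we write $z=\Phi_+v$ and argue as in Lemma~\ref{lem:comparison-function}. Suppose $v$ had a negative value. Since $v\to0$ as $q\to\infty$, and $v=0$ on $\mathcal B$, $v>0$ on $\{q=M\}$, $v$ would attain a negative minimum. An interior minimum is impossible because $L\Phi_+/\Phi_+<0$. A minimum on $\mathcal T$ is impossible because there $-\mathbf n\cdot A\nabla v\ge0$ and $(B\Phi_+/\Phi_+)v>0$, which contradicts the nonpositive right-hand side $-\beta r'/\Phi_+$. Hence $z\ge0$ on the tail. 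Since $\beta r'>0$, $z\not\equiv0$, and the strong maximum principle together with the boundary equation on $\mathcal T$ (as at the end of the proof of Lemma~\ref{lem:comparison-function}) gives $z>0$ on $\Omega\cup\mathcal T$ for $q\ge M$. The same argument with $\Phi_-$ handles $q\le-M$. I expect the main obstacle to be Step 1: making the supersolution margin uniform over the whole neighbourhood and over all of $\{\pm q\ge M\}$. This should follow because $C_b^3$ closeness controls $A(h)-A(h^*)$ in $C^1$ uniformly on the entire strip.
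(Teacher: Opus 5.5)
Your proposal is correct and follows essentially the same route as the paper: fixed comparison functions $\Phi[K_\pm^*]$ remain strict supersolutions on the tails for every nearby solution, positivity on a compact middle region comes from Hopf at the bed plus continuity, and the tails are handled by the quotient maximum principle followed by the strong maximum principle and Hopf upgrade (the $C^3_b$-closeness of $w$ to $w^*$ alone suffices in Step 1, so continuity of $\lambda_+$ is not actually needed). One small fix in Step 2: Hopf and continuity give $z^*_p\ge b$ only on a thin layer $[-M,M]\times[-m,-m+\delta']$ near the bed, not on all of $[-M,M]\times[-m,-m/2]$, so split the middle region at $p=-m+\delta'$ instead; also note that $v\ge 0$, not $v>0$, at the corner $(M,-m)$.
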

	\begin{proof}
		The proofs of $(a)$ and $(b)$ are identical after replacing $h_q$ by $-h_q$, {so it suffices to prove (a).}
		{Let $K_\pm^*$ be the limiting laminar profiles of $h^*$, and let $\Phi_\pm^*:=\Phi[K_\pm^*]$ be their comparison functions from \eqref{eq:explicit-comparison}. The coefficients of the linearized operator at $h^*$ converge in $C^1$ to those of the corresponding limiting operators at the two ends. Consequently, there are constants $K,c,\epsilon_1>0$ such that every solution satisfying}
		\begin{equation*}
				\|w - w^*\|_{C_b^3(\bar{\Omega})} + |\beta - \beta^*| < \epsilon_1,
		\end{equation*}
		we have
		\begin{equation*}
			\operatorname{div}(A(h)\nabla\Phi_{\pm}^*) \leq -\frac{c}{2}
		\end{equation*}
		and
		\begin{equation*}
			-\mathbf{n} \cdot A(h) \nabla \Phi_{\pm}^* + g \Phi_{\pm}^* \leq -\frac{c}{2},
		\end{equation*}
		{on the corresponding tail $\pm q>K$.}
		
		We split the domain $\Omega$ into overlapping regions
		\begin{equation*}
			\Omega_1:=\{(q,p)\in\Omega:|q|<2K\},\qquad\Omega_2:=\{(q,p)\in\Omega:|q|>K\},
		\end{equation*}
		{Denote their top and bottom boundary portions by $\mathcal T_i$ and $\mathcal B_i$, respectively, for $i=1,2$. We first consider the bounded region $\Omega_1$, where}
		\begin{equation*}
			h_q^* > 0 \quad {\text{in }\Omega_1\cup\mathcal T_1}, \quad h_q^* = 0 \quad {\text{on }\mathcal B_1}.
		\end{equation*}
		{Hopf's lemma therefore gives}
		\begin{equation*}
			{h_{qp}^*(q,-m)>0,\qquad|q|<2K.}
		\end{equation*}
		 {By continuity, there is $\epsilon_2>0$ such that every solution satisfying}
		\begin{equation*}
				\|w - w^*\|_{C_b^3(\bar{\Omega})} + |\beta - \beta^*| < \epsilon_2,
		\end{equation*}
		{has $h_q>0$ in $\Omega_1\cup\mathcal T_1$.}
		In particular,
		\[
		h_q \geq 0 \quad \text{on } \{q = \pm K\}.
		\]
		
		{It remains to consider the two tails of $\Omega_2$. Set $z=h_q$ and define the quotient $v$ on each tail by}
		\[ z = \Phi_{\pm}^* v \quad \text{in } \Omega_2. \]
		{Differentiating \eqref{heighteq} with respect to $q$,}
		\[ \operatorname{div}(A(h)\nabla z) = 0 \quad \text{in } \Omega_2 \]
		and
		\[ {-\mathbf n\cdot} A(h)\nabla z + gz = -\beta r' < 0 \quad \text{on } \mathcal T_2. \]
		Consequently,
		\[
		\operatorname{div}(A(h)\nabla v) + 2A(h)\nabla(\log \Phi_{\pm}^*) \cdot \nabla v
		+ \frac{\operatorname{div}(A(h)\nabla \Phi_{\pm}^*)}{\Phi_{\pm}^*}v = 0
		\]
		in \(\Omega_2\), and
		\[
		{-\mathbf n\cdot} A(h)\nabla v
		+ \frac{{-\mathbf n\cdot} A(h)\nabla \Phi_{\pm}^* + g \Phi_{\pm}^*}{ \Phi_{\pm}^*}v
		= -\frac{\beta r'}{ \Phi_{\pm}^*} < 0
		\]
		on \(\mathcal T_2\).
		Moreover,
		\[
		v = 0 \quad {\text{on }\mathcal B_2}, \quad v \geq 0 \quad \text{on } \{q = \pm K\},
		\]
		and \(v \to 0\) at the corresponding infinite end. The comparison function argument in Lemma \ref{lem:comparison-function} therefore gives
		\[
		h_q = z \geq 0 \quad \text{in } \Omega_2.
		\]
		{The strong maximum principle and Hopf's lemma improve this to}
		\[
		h_q > 0 \quad \text{in } \Omega_2 \cup \mathcal T_2,
		\]
		due to
		\[
		{-\mathbf n\cdot} A(h)\nabla h_q + gh_q = -\beta r' < 0.
		\]
		{Combining the estimates on the bounded region and the tails gives}
		\[
		h_q > 0 \quad \text{in } \Omega \cup \mathcal T
		\]
		by taking
		\[
		\epsilon := \min\{\epsilon_1, \epsilon_2, \beta^* / 2\},
		\]
		{and Hopf's lemma on $\mathcal B$ also gives} $h_{qp}>0$ on $\mathcal B$,
		 which completes the proof.
		\end{proof}

		\begin{lemma}[Closed condition]\label{lem:closed}
			{Let $(\beta_n,w_n)$ be a sequence of solutions such that}
			\[
			( \beta_n,w_n) \to (\beta,w) \quad \text{in }\R\times\mathscr X_{\infty},
			\]
			where \((\beta,w)\) is also a solution. Let
			\[
			h_n = H + w_n, \quad h = H + w.
			\]
			
			(a) Suppose that
			\[
			\beta_n > 0, \quad h_{nq} > 0 \quad \mathrm{in~} \Omega \cup \mathcal T, \quad \beta \geq 0.
			\]
			Then either
			\[
			{(\beta,w)=(0,0)},
			\]
			or else \(\beta > 0\) and
			\[
			h_q > 0 \quad \mathrm{in~} \Omega \cup\mathcal T.
			\]
			
			(b) Suppose that
			\[
			\beta_n < 0, \quad h_{nq} < 0 \quad \mathrm{in~} \Omega \cup\mathcal T, \quad \beta \leq 0.
			\]
			Then either
			\[
			{(\beta,w)=(0,0)},
			\]
			or else \(\beta < 0\) and
			\[
			h_q < 0 \quad \mathrm{in~} \Omega \cup\mathcal T.
			\]
		\end{lemma}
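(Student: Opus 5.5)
We prove (a); part (b) follows by replacing $h_q$ with $-h_q$ and $\beta$ with $-\beta$. Convergence in $\mathscr X_\infty$ implies convergence in $C_b^3$, so $h_{nq}\to h_q$ uniformly. Hence $h_q\ge0$ in $\overline\Omega$, and $z:=h_q$ solves
\[
\operatorname{div}(A(h)\nabla z)=0\ \text{in }\Omega,\qquad -\mathbf n\cdot A(h)\nabla z+gz=-\beta r'\ \text{on }\mathcal T,\qquad z=0\ \text{on }\mathcal B .
\]
Since $(\beta,w)$ is a solution in $\mathscr U$, the matrix $A(h)$ is uniformly positive definite and the interior equation is uniformly elliptic. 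We then split into the cases $\beta>0$ and $\beta=0$.

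\emph{Case $\beta>0$.} The function $z\ge0$ solves a uniformly elliptic divergence-form equation with no zeroth-order term. By the strong maximum principle, either $z>0$ in $\Omega$ or $z\equiv0$.

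If $z\equiv0$, the boundary equation gives $0=-\beta r'<0$ on $\mathcal T$, which contradicts Assumption~\ref{ass:pressure}. So $z>0$ in $\Omega$.

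Now suppose $z(q_0,0)=0$ at a point of $\mathcal T$. Then this point is a boundary minimum of $z$. Since the conormal $A(h)\mathbf n$ points outward, Hopf's lemma gives $\mathbf n\cdot A(h)\nabla z<0$ there. The boundary equation then reads
\[
-\mathbf n\cdot A(h)\nabla z=-\beta r'(q_0)<0,
\]
which contradicts the sign just obtained. Hence $z>0$ on $\Omega\cup\mathcal T$, and Hopf's lemma on $\mathcal B$ also gives $h_{qp}>0$ there.

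\emph{Case $\beta=0$.} Here $R\equiv0$, so the conjugate flow equation forces $\lambda_+(0)=\lambda$. Thus both limiting states equal $H$, and $w\in\mathscr X_\infty$ decays at both ends.

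Since $h_q\ge0$ and $h$ has the same limit $H$ at $q=\pm\infty$, we have, for each fixed $p$,
\[
\int_{\R}h_q(q,p)\,dq=H(p)-H(p)=0 .
\]
As the integrand is nonnegative and continuous, $h_q\equiv0$. So $h$ is independent of $q$, and being a laminar solution with the upstream limit $H$, it equals $H$. This gives $w=0$, and therefore $(\beta,w)=(0,0)$.

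The one point requiring care is that the limit lies on the supercritical component, so that Lemma~\ref{lemma:analytic-map} identifies the endpoint profiles. This holds because the endpoint profiles depend continuously on the limit in $\mathscr X_\infty$ and $\beta<\beta_{\rm cr}$ in $\mathscr U$.

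The only real obstacle is the boundary point on $\mathcal T$ in the case $\beta>0$. The zeroth-order term $gz$ there has the "wrong" sign for a standard argument, but it vanishes at a zero of $z$. This is why the sign of the normal derivative combined with the strict forcing $-\beta r'<0$ suffices, and the quotient transformation is not needed here.
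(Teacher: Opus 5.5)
Your proof is correct and follows the paper's argument: the strong maximum principle plus Hopf's lemma at a putative zero on $\mathcal T$, contradicted by the strict forcing $-\beta r'<0$, and for $\beta=0$ the observation that a $q$-monotone height with equal endpoint limits $H$ must equal $H$ (the paper phrases this as $H\le h\le H$). Your remark that the endpoint profiles pass to the limit by continuity of the endpoint maps on $\mathscr X_\infty$ makes explicit a point the paper leaves implicit.
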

		\begin{proof}
			The proofs of (a) and (b) are the same, so we prove only (a). The convergence in \( \mathscr{X}_{\infty} \) implies
			\[
			h_{nq} \to h_q
			\]
			uniformly in \( \overline{\Omega} \). Hence
			$h_q \geq 0 \quad \text{on } \overline{\Omega}.$
			
			First suppose that \( \beta > 0 \). Setting \( z = h_q \), we have
			\[
			\operatorname{div}(A(h)\nabla z) = 0 \quad \text{in } \Omega,
			\]
			\[
			-\mathbf{n} \cdot A(h)\nabla z + gz = -\beta r' < 0 \quad \text{on } \mathcal T,
			\]
			and
			$z = 0 \quad \text{on } \mathcal B.$
			
			The strong maximum principle implies
			\[
			z > 0 \quad \text{in } \Omega
			\]
			unless \( z \equiv 0 \). The latter alternative is ruled out by the  boundary condition on \( \mathcal T \).
			
			It remains only to rule out a zero on {$\mathcal T$}. Suppose for contradiction that
			$z(q^*, 0) = 0$
			for some \( q^* \in \mathbb{R} \). Since \( z \geq 0 \) and \( z > 0 \) in the interior, the Hopf boundary-point lemma gives
			\[
			-\mathbf{n} \cdot A(h)\nabla z(q^*, 0) > 0.
			\]
			On the other hand, the boundary condition gives
			\[
			-\mathbf{n} \cdot A(h)\nabla z(q^*, 0) = -\beta r'(q^*) < 0,
			\]
			which is a contradiction. Thus
			\[
				h_q = z > 0 \quad {\text{in }\Omega\cup\mathcal T}.
			\]
			{Hopf's lemma on $\mathcal B$ also gives} $h_{qp}>0$ on $\mathcal B$.
			If $\beta=0$, the monotonicity and the endpoint profiles give $H\leq h\leq H$, and hence ${(\beta,w)=(0,0)}$.
		\end{proof}

	\section{{Flow force and compactness}}\label{sec:compactness}
	\subsection{{Flow-force balance and laminar uniqueness}}
	{To control possible loss of compactness at infinity, we use a pressure-adjusted \emph{flow force}, based on the formulations in~\cite{basu,bk,milesjma}. Denote it by $\mathscr S$ and define}
	\begin{equation*}
	\mathscr	S(x)=\int_0^{d+\eta(x)} (P(x,y)-P_{\mathrm{atm}}-R(x)+(c-u(x,y))^2)dy.
	\end{equation*}
	{In height coordinates, with $h=H+w$, this becomes}
	\begin{equation}\label{eq:flow-force}
	\mathscr{S}[h;R](q)=\int_{-m}^0\left(\frac{1-h_q^2}{2h_p^2}+\Gamma-g(h-d)+\frac{\lambda}{2}-R(q)\right)h_pdp.
	\end{equation}

	\begin{lemma}
	For the height function $h$ satisfying \eqref{heighteq}, we have
	\begin{equation}\label{eq:flow-force-balance}
		\frac{d\mathscr S}{dq}[h;R](q)=-R'(q)h(q,0).
	\end{equation}
	In particular, if $R\equiv\mathrm{const}$, then $\mathscr S$ is constant.
\end{lemma}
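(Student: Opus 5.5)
The plan is to write the $q$-derivative of the integrand in \eqref{eq:flow-force} as a $p$-derivative, up to one term coming from $R'$, and then integrate in $p$. Along the way we check that the boundary terms at $p=0$ and $p=-m$ vanish because of the Bernoulli condition \eqref{heighteq2} and the bed condition $h=0$ on $\mathcal B$. Write the integrand as
\[
I:=\frac{1-h_q^2}{2h_p}+\Bigl(\Gamma-g(h-d)+\frac{\lambda}{2}-R\Bigr)h_p ,
\]
and introduce the bracket
\[
\mathcal E:=-\frac{1+h_q^2}{2h_p^2}+\Gamma-g(h-d)+\frac{\lambda}{2}-R(q).
\]
It is suggested by the interior equation \eqref{heighteq1} together with the vertical momentum balance.

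\emph{Step 1 (key identity).} We aim to show that, for $h$ satisfying \eqref{heighteq1},
\[
\partial_q I=\partial_p\bigl(-h_q\,\mathcal E\bigr)-R'(q)\,h_p\qquad\text{in }\Omega .
\]
To verify this, expand both sides. On the left we get
\[
-\frac{h_qh_{qq}}{h_p}-\frac{(1-h_q^2)h_{pq}}{2h_p^2}-gh_qh_p-R'h_p+\Bigl(\Gamma-g(h-d)+\frac{\lambda}{2}-R\Bigr)h_{pq}.
\]
On the right we get
\[
-h_{qp}\mathcal E-h_q\,\partial_p\mathcal E,\qquad \partial_p\mathcal E=\Bigl(-\frac{1+h_q^2}{2h_p^2}+\Gamma\Bigr)_p-gh_p .
\]
Next we replace $\bigl(-\frac{1+h_q^2}{2h_p^2}+\Gamma\bigr)_p$ by $-(h_q/h_p)_q=-\frac{h_{qq}}{h_p}+\frac{h_qh_{pq}}{h_p^2}$, using \eqref{heighteq1}. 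After this substitution the two expressions should agree term by term: the $gh_qh_p$ terms match, the $(\Gamma-g(h-d)+\lambda/2-R)h_{pq}$ terms match, and the remaining terms involving $h_{pq}/h_p^2$ and $h_qh_{qq}/h_p$ combine, since $\frac{1+h_q^2}{2}-h_q^2=\frac{1-h_q^2}{2}$. This bookkeeping is the only real computation and is where I expect sign errors to creep in. If the candidate flux $-h_q\mathcal E$ turns out to be off, I would instead derive the correct flux by rewriting \eqref{heighteq1} in conservation form for the momentum density $(c-u)^2+P$ in the original variables, and then transport it back through the Dubreil--Jacotin map.

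\emph{Step 2 (integration and boundary terms).} Since $h\in H+\mathscr X_\infty$ has $C^{3+\alpha}$ regularity, we may differentiate \eqref{eq:flow-force} under the integral sign. Integrating the identity from Step 1 over $p\in(-m,0)$ then gives
\[
\frac{d\mathscr S}{dq}=\Bigl[-h_q\,\mathcal E\Bigr]_{p=-m}^{p=0}-R'(q)\int_{-m}^0h_p\,dp .
\]
On $\mathcal B$ we have $h\equiv0$, so $h_q=0$ there and the lower boundary term vanishes. The same fact gives $\int_{-m}^0h_p\,dp=h(q,0)$. On $\mathcal T$ we have $\Gamma(0)=0$, and \eqref{heighteq2} with $Q=\lambda+2gd$ says $\frac{1+h_q^2}{2h_p^2}+gh=\frac{\lambda}{2}+gd-R$, so $\mathcal E=0$ there. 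Together these yield \eqref{eq:flow-force-balance}.

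The final assertion is then immediate: if $R$ is constant, then $R'\equiv0$ and $\mathscr S$ is independent of $q$.
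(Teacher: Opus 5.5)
Your strategy is the same as the paper's: write $\partial_q I$ as a $p$-derivative plus $-R'(q)h_p$, integrate over $p$, and kill the boundary terms using \eqref{heighteq2} and $h=0$ on $\mathcal B$. Your Step 2 is correct, but the key identity in Step 1 has the wrong sign. With the flux $-h_q\mathcal E$, substituting \eqref{heighteq1} gives
\[
\partial_p(-h_q\mathcal E)=\frac{h_qh_{qq}}{h_p}+\frac{(1-h_q^2)h_{pq}}{2h_p^2}+gh_qh_p-\Bigl(\Gamma-g(h-d)+\frac{\lambda}{2}-R\Bigr)h_{pq},
\]
which is exactly $-(\partial_q I+R'(q)h_p)$. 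Every term appears with the opposite sign, including the $gh_qh_p$ and $(\Gamma-\dots)h_{pq}$ terms that you say match. So the term-by-term check you describe would fail as written.

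The correct identity, which is the one the paper uses, is
\[
\partial_q I=\partial_p(h_q\mathcal E)-R'(q)h_p .
\]
The flux $h_q\mathcal E$ vanishes on $\mathcal T$, where $\mathcal E=0$, and on $\mathcal B$, where $h_q=0$. Hence your Step 2 goes through verbatim with the corrected sign and yields \eqref{eq:flow-force-balance}. The error does not affect the conclusion, and you do not need the fallback through the Eulerian momentum flux.
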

\begin{proof}
	{Let $I(q,p)$ denote the integrand in \eqref{eq:flow-force}. Equation~\eqref{heighteq1} gives}
	\begin{equation*}
		I_q=\partial_p\left[
		\left(\frac{\lambda}{2}-R(q)+\Gamma(p)-g(h-d)
		-\frac{1+h_q^2}{2h_p^2}\right)h_q
		\right]-R'(q)h_p.
	\end{equation*}
	Together with the boundary conditions on $\mathcal T$ and $\mathcal B$, integrating with respect to $p$ gives
	\begin{equation*}
		\frac{d\mathscr S}{dq}[h;R](q)
		=-R'(q)\int_{-m}^0h_p(q,p)dp
		=-R'(q)h(q,0),
	\end{equation*}
	which completes the proof.
\end{proof}

	{To compare laminar flow forces, let $\hat\lambda$ be a laminar parameter and consider the family}
	\begin{equation*}
		\hat H(p;\hat\lambda)=\int_{-m}^p\frac{ds}{\sqrt{\hat\lambda+2\Gamma(s)}},
	\end{equation*}
	{Its depth is $\hat d=\hat H(0;\hat\lambda)$. Since depth is strictly decreasing in $\hat\lambda$, we may use $\hat d$ as a parameter and write the inverse relation as $\hat\lambda=\tilde\lambda(\hat d)$. Define the corresponding Bernoulli function by}
	\begin{equation*}
		\tilde Q(\hat d)=\frac{\tilde\lambda(\hat d)}{2}+g\hat d.
	\end{equation*}
	{We write $\tilde{\mathscr S}$ for the flow force expressed in this depth parameter:}
	\begin{equation*}
		\tilde{\mathscr S}[\hat H;R](\hat d)
		=\mathscr S[\hat H(\cdot;\tilde\lambda(\hat d));R].
	\end{equation*}

\begin{lemma}\label{lemma:no bore}
	For fixed constant pressure $R^*$, the function
	$\tilde{\mathscr S}[\hat H;R^*](\hat d)$ satisfies
	\begin{equation}\label{bernoulliide}
		\tilde{\mathscr S}[\hat H;R^*]'(\hat d)
		=\tilde Q(d)-R^*-\tilde Q(\hat d).
	\end{equation}
	{Consequently, two distinct laminar flows cannot satisfy the same Bernoulli condition and have the same flow force.}
\end{lemma}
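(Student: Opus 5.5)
The plan is to compute the flow force of a laminar profile explicitly as a function of the laminar parameter $\mu$, differentiate in $\mu$, and then convert to the depth parameter by the chain rule. For $\hat H=\hat H(\cdot;\mu)$ we have $\hat H_q=0$ and $1/(2\hat H_p^2)=\mu/2+\Gamma$. Hence \eqref{eq:flow-force} with constant $R^*$ becomes
\[
\mathscr S(\mu)=\int_{-m}^0\Bigl(\tfrac{\mu}{2}+2\Gamma(p)-g\hat H+gd+\tfrac{\lambda}{2}-R^*\Bigr)\hat H_p\,dp .
\]
We will use the elementary identities
\[
\hat H_\mu(-m)=0,\qquad \hat H_{\mu p}=-\tfrac12\hat H_p^3,\qquad \hat H_\mu(0)=\mathcal D'(\mu),\qquad \Gamma(0)=0,
\]
together with $\Gamma_p=\gamma(-p)$.

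First we differentiate under the integral sign. The term with $\partial_\mu$ of the bracket gives $\int(\tfrac12-g\hat H_\mu)\hat H_p\,dp$. The term with $\hat H_{p\mu}$ is integrated by parts in $p$. Its boundary contribution at $p=0$ is
\[
\bigl(\tfrac{\mu}{2}-g\hat d+gd+\tfrac{\lambda}{2}-R^*\bigr)\hat H_\mu(0);
\]
the contribution at $p=-m$ vanishes. The remaining integral is $-\int(2\Gamma_p-g\hat H_p)\hat H_\mu\,dp$. The two $g\int\hat H_p\hat H_\mu$ terms cancel.

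The leftover terms are $\tfrac12\int\hat H_p\,dp-2\int\Gamma_p\hat H_\mu\,dp$. A second integration by parts, using $\Gamma(0)=0$ and $\hat H_\mu(-m)=0$, turns the last term into $-\int\Gamma\hat H_p^3\,dp$. Writing $\hat H_p=(\mu+2\Gamma)\hat H_p^3$, the leftover terms combine to
\[
\tfrac{\mu}{2}\int_{-m}^0\hat H_p^3\,dp=-\mu\,\hat H_\mu(0).
\]
Altogether this gives
\[
\frac{d\mathscr S}{d\mu}=\mathcal D'(\mu)\Bigl(\tfrac{\lambda}{2}+gd-R^*-\tfrac{\mu}{2}-g\hat d\Bigr).
\]
Since $d\hat d/d\mu=\mathcal D'(\mu)<0$ by \eqref{eq:B-derivative}, and $\tilde Q(\hat d)=\mu/2+g\hat d$, dividing by $\mathcal D'(\mu)$ yields \eqref{bernoulliide}. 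Note that $\tilde Q(d)=\lambda/2+gd$ by \eqref{upbernoulli}. This bookkeeping of the two integrations by parts is the main, though routine, obstacle; the only care needed is with the boundary values of $\Gamma$ and $\hat H_\mu$.

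For the consequence, suppose two laminar depths $\hat d_1<\hat d_2$ satisfy the same Bernoulli condition $\tilde Q(\hat d_i)=\tilde Q(d)-R^*$. Then
\[
\tilde{\mathscr S}(\hat d_2)-\tilde{\mathscr S}(\hat d_1)=\int_{\hat d_1}^{\hat d_2}\bigl(\tilde Q(\hat d_1)-\tilde Q(s)\bigr)\,ds .
\]
By \eqref{eq:B-derivative} and the monotonicity of $\mathcal D$, $\tilde Q'(\hat d)$ has the sign of $F^{2}-1$ composed with a decreasing reparametrization. Since $F^{-2}(\mu)$ is strictly decreasing, $\tilde Q$ is strictly decreasing for depths below the critical depth and strictly increasing above it. A function with this shape that takes equal values at $\hat d_1$ and $\hat d_2$ must lie strictly below that common value on $(\hat d_1,\hat d_2)$. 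So the integrand is strictly positive there, the two flow forces differ, and the claim follows.
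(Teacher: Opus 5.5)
Your proof is correct. Its overall structure matches the paper's: differentiate the laminar flow force in the depth parameter, then use the shape of $\tilde Q$ between two equal values. The difference lies in how the derivative is obtained. You differentiate under the integral in $\mu$, integrate by parts twice, and convert with the chain rule. The paper instead first evaluates the laminar flow force in closed form. Using $(\tilde\lambda+2\Gamma)\hat H_p=\sqrt{\tilde\lambda+2\Gamma}$ and $\int_{-m}^0\hat H\hat H_p\,dp=\hat d^2/2$, it gets $\tilde{\mathscr S}=\int_{-m}^0\sqrt{\tilde\lambda(\hat d)+2\Gamma}\,dp+\frac{\lambda-\tilde\lambda(\hat d)}{2}\hat d-\frac g2\hat d^2+(gd-R^*)\hat d$. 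A single differentiation, with $\frac{d}{d\hat d}\int_{-m}^0\sqrt{\tilde\lambda+2\Gamma}\,dp=\frac12\tilde\lambda'(\hat d)\,\hat d$, then yields \eqref{bernoulliide}. The $\tilde\lambda'$ terms cancel immediately, so no boundary values of $\hat H_\mu$ or $\Gamma$ need to be tracked.

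Your route is longer, but every step checks, including $\frac12\int\hat H_p\,dp-\int\Gamma\hat H_p^3\,dp=\frac\mu2\int\hat H_p^3\,dp=-\mu\hat H_\mu(0)$. For the consequence, the paper uses strict convexity of $\tilde Q$, since $\tilde Q'(\hat d)=g\bigl(1-F^2(\tilde\lambda(\hat d))\bigr)$ is strictly increasing. You use only that $\tilde Q$ decreases below and increases above the critical depth. Either property makes the integrand strictly positive on $(\hat d_1,\hat d_2)$.

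One slip in wording: by the formula above, $\tilde Q'$ has the sign of $1-F^2$, not $F^2-1$. The shape description you draw from it is nonetheless the correct one.
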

\begin{proof}
	{Substitution into the flow-force formula gives}
	\begin{equation*}
		\begin{aligned}
			\tilde{\mathscr S}[\hat H;R^*](\hat d)
			={}&\int_{-m}^0\sqrt{\tilde\lambda(\hat d)+2\Gamma(p)}dp\\
			&+\frac{\lambda-\tilde\lambda(\hat d)}{2}\hat d
			-\frac{g}{2}\hat d^2+(gd-R^*)\hat d.
		\end{aligned}
	\end{equation*}
	Differentiating with respect to $\hat d$ and applying
	\begin{equation*}
		\frac{d}{d\hat d}
		\int_{-m}^0\sqrt{\tilde\lambda(\hat d)+2\Gamma(p)}dp
		=\frac12\tilde\lambda'(\hat d)
		\int_{-m}^0\frac{dp}{\sqrt{\tilde\lambda(\hat d)+2\Gamma(p)}}
		=\frac12\hat d\,\tilde\lambda'(\hat d),
	\end{equation*}
	we can obtain
	\begin{equation*}
		\tilde{\mathscr S}[\hat H;R^*]'(\hat d)
		=\frac{\lambda}{2}+gd-R^*
		-\frac{\tilde\lambda(\hat d)}{2}-g\hat d
		=\tilde Q(d)-R^*-\tilde Q(\hat d).
	\end{equation*}
	
	Moreover, by \eqref{eq:B-derivative}, we have
	\begin{equation*}
		\tilde Q'(\hat d)
		=g\bigl(1-F^2(\tilde\lambda(\hat d))\bigr),
	\end{equation*}
	which is strictly increasing in $\hat d$. Thus $\tilde Q$ is strictly convex. If two distinct depths $d_1<d_2$ satisfy the same Bernoulli condition
	\begin{equation*}
		\tilde Q(d_1)=\tilde Q(d_2)=\tilde Q(d)-R^*,
	\end{equation*}
	then \eqref{bernoulliide} gives
	\begin{equation*}
		\tilde{\mathscr S}[\hat H;R^*](d_2)
		-\tilde{\mathscr S}[\hat H;R^*](d_1)
		=\int_{d_1}^{d_2}
		\bigl(\tilde Q(d)-R^*-\tilde Q(s)\bigr)ds>0.
	\end{equation*}
	Hence these two laminar flows cannot have the same flow force, which completes the proof.
\end{proof}
\begin{lemma}[Uniqueness at zero pressure]\label{lem:zeropressure}
	  {The only monotone solution on the supercritical component with $\beta=0$ is the laminar flow $w=0$.}
\end{lemma}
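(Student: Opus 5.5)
The plan is to show that at $\beta=0$ the two endpoint states coincide, and then to use monotonicity to squeeze $h$ between them. No compactness or flow-force argument beyond this should be needed.

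\emph{Step 1: the endpoints agree.} Let $(0,w)$ be a monotone solution on the supercritical component, with $h=H+w$. By Lemma~\ref{lemma:analytic-map}, its endpoint profiles are $K_-=H(\cdot;\lambda)$ and $K_+=H(\cdot;\lambda_+(0))$. Setting $\beta=0$ in the conjugate flow equation \eqref{eq:conjugate} gives $\mathcal Q(\lambda_+(0))=\mathcal Q(\lambda)$. Since $\mathcal Q$ is strictly increasing on $(\lambda_{\rm cr},\infty)$ and both parameters lie there, $\lambda_+(0)=\lambda$. Hence $K_+=K_-=H$.

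\emph{Step 1 (alternative route).} The same conclusion also follows without the component assumption. When $R\equiv 0$, the flow-force balance \eqref{eq:flow-force-balance} makes $\mathscr S$ constant in $q$. Both endpoint laminar flows then satisfy the same Bernoulli condition and carry the same flow force, so Lemma~\ref{lemma:no bore} forces them to be identical. This version is worth recording because it treats possible subcritical downstream states uniformly. The key check is that $\mathscr S[h;0](q)$ converges to the laminar value $\tilde{\mathscr S}[\hat H;0](d_\pm)$ as $q\to\pm\infty$. This holds because $h\to K_\pm$ in $C^1$ uniformly in $p$, which is guaranteed by $w\in\mathscr X_\infty$.

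\emph{Step 2: squeeze by monotonicity.} Suppose $h_q\ge 0$ throughout, or $h_q\le 0$ throughout. For each fixed $p$, the map $q\mapsto h(q,p)$ is then monotone between its limits $K_-(p)=H(p)$ and $K_+(p)=H(p)$. It follows that $H\le h\le H$, so $w\equiv 0$. This is exactly the argument closing the proof of Lemma~\ref{lem:closed}.

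\emph{Expected obstacle.} The main care point is interpreting ``monotone'' so that it covers non-strict monotonicity as well. It must include limits of strictly monotone solutions, since this is how the lemma is used after the closed condition, Lemma~\ref{lem:closed}. The pointwise squeeze in Step 2 only needs weak monotonicity, so no strong maximum principle is required. If a non-monotone solution were allowed at $\beta=0$, one would instead study $z=h_q$. It solves the homogeneous linearized problem with $f=0$ and decays at both ends, and for $w$ small one could apply Lemma~\ref{lem:comparison-function} to $\pm z$ to obtain $z\equiv0$. For general $w$, however, no comparison function is available, so the monotonicity hypothesis is essential and I would keep the statement restricted to it.
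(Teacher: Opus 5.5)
Your proof is correct and is essentially the paper's argument. The paper makes the two endpoint states coincide using the constant flow force and Lemma~\ref{lemma:no bore} (your alternative route), identifies them with $H$ via Lemma~\ref{lemma:analytic-map}, and squeezes $H\le h\le H$ by monotonicity; your primary Step~1 observes that Lemma~\ref{lemma:analytic-map} together with $\lambda_+(0)=\lambda$ already gives $K_\pm=H$, which makes the flow-force step redundant on the supercritical component.

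One caveat on your alternative route: by itself it yields only $K_+=K_-$, so the identification $K_-=H$ still needs the upstream normalization or the component hypothesis. A subcritical laminar flow with the same Bernoulli constant, when it exists, is a nonzero $q$-independent zero of $\mathcal F$ at $\beta=0$.
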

	\begin{proof}
		{For $\beta=0$, the pressure and flow force are constant. Lemma~\ref{lemma:no bore} forces the two laminar limits to coincide, and Lemma~\ref{lemma:analytic-map} identifies them with $H$. Monotonicity then gives $h\equiv H$, hence $w=0$.}
	\end{proof}

	\subsection{{Uniform regularity and compactness}} {We first obtain uniform regularity from bounds on the vertical height derivative and the pressure strength. We then combine these estimates with the flow-force balance to control both tails of a sequence of monotone fronts.}
	\begin{lemma}\label{lem:regularity}
		{Let $(\beta,w)\in\mathscr U$ be a zero of $\mathcal F$, and set $h=H+w$. Suppose that, for some constant $M>0$,}
		\begin{equation*}
			\|w_p\|_{L^\infty}+|\beta|\leq M.
		\end{equation*}		
				Then
		\begin{equation*}
			\inf_{\Omega}h_p\ge C^{-1}>0,\qquad
			\|h\|_{C_b^{4+\alpha}(\overline\Omega)}
\le C,
		\end{equation*}
		{where $C$ depends only on $M,\alpha,m,g$, the fixed upstream profile $H$, and the norms $\|\gamma\|_{C^{4+\alpha}([0,m])}$ and $\|r\|_{C_b^{4+\alpha}(\mathbb R)}$.}
	\end{lemma}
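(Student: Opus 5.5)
The plan is to follow the standard route for height-function regularity, as in Constantin--Strauss and Wheeler: a lower bound on $h_p$ first, then gradient bounds, then Schauder bootstrapping. All estimates will be applied on unit boxes $(j,j+1)\times(-m,0)$, with constants independent of $j$.

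\emph{Step 1: bounds on $h_p$.} The upper bound is immediate, since $h_p=H_p+w_p\le \|H_p\|_{L^\infty}+M$. For the lower bound, note that $1/h_p=c-u$ is the relative horizontal velocity. I would use that $\Phi:=1/(2h_p^2)+\dots$, or more directly the quantity $\frac{1+h_q^2}{2h_p^2}-\Gamma+gh$ (the Bernoulli head), satisfies a maximum principle, and that on $\mathcal T$ it equals $\frac{\lambda}{2}-\beta r+gd$ up to constants. This bounds $|\nabla\psi|^2=(1+h_q^2)/h_p^2$ in terms of $\lambda,|\beta|,\|\Gamma\|_\infty,g$ and the height. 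The height itself is controlled because $0\le h\le \int_{-m}^0(H_p+M)\,dp$. The standard statement here (cf. Constantin--Strauss) is that $|\nabla\psi|$ is maximized on the surface or the bed. On the bed I would argue with the Hopf lemma for the elliptic equation satisfied by $\psi$, or through the flow force: the flow-force identity \eqref{eq:flow-force-balance} together with $|R'|\le |\beta|\|r'\|$ bounds $\int (1/h_p)\,dp$ along each vertical line. This yields $|\nabla\psi|\le C$, i.e.\ $1/h_p\le C$ and $|h_q|/h_p\le C$. Consequently $\inf h_p\ge C^{-1}$ and $|h_q|\le C$.

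\emph{Step 2: H\"older gradient bound.} With $C^{-1}\le h_p\le C$ and $|h_q|\le C$, equation \eqref{heighteq1} is uniformly elliptic in divergence form and quasilinear. The boundary condition \eqref{heighteq2} is a nonlinear oblique (in fact conormal-type) condition whose data $\beta r$ are bounded in $C^{4+\alpha}$. I would then apply the De Giorgi--Nash/Lieberman $C^{1+\alpha_0}$ estimates for quasilinear equations with nonlinear oblique boundary conditions, on boxes of unit size. Alternatively, one can differentiate in $q$: $z=h_q$ solves the divergence-form linear problem written in the proof of Theorem~\ref{thm:local}, to which De Giorgi--Nash applies, and $h_p$ is then recovered from the equation. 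This gives $\|h\|_{C^{1+\alpha_0}}\le C$ uniformly on every box.

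\emph{Step 3: bootstrap.} Once the coefficients $A(h)$ are $C^{\alpha_0}$, linear Schauder estimates for the oblique problem, as in Agmon--Douglis--Nirenberg and Lieberman, give $C^{2+\alpha_0}$. Iterating by differentiating in $q$ and using $\gamma\in C^{4+\alpha}$ and $r\in C^{4+\alpha}_b$ raises the regularity to $C^{4+\alpha}$ on each box with uniform constants. Taking the supremum over $j$ gives the bound on $\|h\|_{C_b^{4+\alpha}}$.

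I expect the main obstacle to be the uniform lower bound on $h_p$ in Step~1, i.e.\ excluding near-stagnation using only $\|w_p\|_\infty$ and $|\beta|$. The hypothesis only controls $h_p$ from above, so some maximum-principle argument on the velocity, or a Bernoulli argument, is essential. I would first try the Constantin--Strauss maximum principle for $|\nabla\psi|^2$ plus Bernoulli on $\mathcal T$. If the bed is problematic, I would fall back on a Harnack-type argument for $z$ combined with the flow-force bound.
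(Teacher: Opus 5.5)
Your Steps 2 and 3 agree in substance with the paper: Lieberman's estimate for the nonlinear oblique condition on $\mathcal T$, interior and Dirichlet gradient estimates near $\mathcal B$, then the linear mixed problem for $z=h_q$ and the formula for $h_{pp}$ to bootstrap to $C_b^{4+\alpha}$. The gap is in Step 1, which carries the real content of the lemma. None of the mechanisms you list yields $\inf_\Omega h_p\ge C^{-1}$ for general vorticity.

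\emph{The claimed maximum principles.} The statement that $|\nabla\psi|$ is maximized on the surface or the bed is false for general $\gamma$. Indeed $\Delta\tfrac12|\nabla\psi|^2=|D^2\psi|^2-\gamma'(\psi)|\nabla\psi|^2$ is unsigned, and a laminar shear flow whose relative speed $c-U$ peaks at mid-depth is a counterexample. Your Bernoulli head is, in physical variables, $B=\tfrac12|\nabla\psi|^2-\Gamma(-\psi)+gy=\tfrac Q2-(P-P_{\mathrm{atm}})$. Its Laplacian $\Delta B=|D^2\psi|^2-\gamma(\psi)^2=-2\det D^2\psi$ also has no sign. So asserting a maximum principle for $B$ amounts to asserting, without proof, a minimum principle for the pressure.

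\emph{The bed.} For $|\nabla\psi|^2$ the bed is a genuine obstruction. On $y=0$ one has $\partial_y\tfrac12|\nabla\psi|^2=-\gamma(m)\psi_y<0$ whenever $\gamma(m)<0$, so Hopf's lemma gives no contradiction there.

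\emph{The fallbacks.} The flow force contains $\int_{-m}^0\frac{1-h_q^2}{2h_p}\,dp$, so a bound on $\mathscr S$ does not bound $\int_{-m}^0h_p^{-1}\,dp$. Even an integral bound would not give a pointwise bound on $\mathcal B$. A Harnack inequality for $z=h_q$ says nothing about $h_p$.

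The missing idea is a maximum principle for a \emph{modified} pressure, with the bed excluded by the sign of a normal derivative. The paper uses $\tilde P=P-P_{\mathrm{atm}}+a\psi$ with $a=\tfrac12\max\{0,\max\gamma\}$. This function has no interior minimum. It has none on the bed, because $\tilde P_y=-g+a\psi_y<0$ there. It equals $R\ge\inf R$ on the surface, and it is bounded below at both laminar ends. Bernoulli's law then gives $\frac{1+h_q^2}{2h_p^2}\le C$, hence $h_p\ge c$ and $|h_q|\le C$.

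Your own quantity can also be repaired directly:
\begin{itemize}
\item Since $|D^2\psi|^2\ge\tfrac12(\Delta\psi)^2$, the function $B+\tfrac14\|\gamma\|_{L^\infty}^2y^2$ is subharmonic.
\item On the bed, $\psi_x=\psi_{xx}=0$ gives exactly $\partial_yB=g>0$, so no maximum occurs there.
\item On the surface, $B=\tfrac Q2-\beta r$.
\item As $q\to\pm\infty$, $B$ tends to $\tfrac{\mu_\pm}{2}+gK_\pm\le\mathcal Q(\mu_\pm)\le\tfrac Q2+|\beta|$.
\end{itemize}
Combined with your bound $0\le h\le C$, this controls $\tfrac12|\nabla\psi|^2$ and closes Step 1. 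In either version you must also handle the case where the extremum is only approached as $q\to\pm\infty$, which your proposal does not address.
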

\begin{proof}
	In this proof, the constant $C>0$ may change from {line to line. Since} $h_p=H_p+w_p$, the hypothesis gives
\[
    \sup_{\Omega}h_p\le C.
\]
Moreover, $h=0$ on $\mathcal B$ and $h_p>0$, so
\[
    0\le h(q,p)=\int_{-m}^{p}h_p(q,s)\,ds
    \le m\sup_{\Omega}h_p\le C.
\]
Thus $h$ is uniformly bounded in $C^0(\overline\Omega)$.

To obtain gradient bounds throughout the fluid, we use the
pressure minimum-principle argument of \cite{varjde}.
{For the pressure estimate, define the nonnegative vorticity bound}
\[
    a:=\frac12\max\left\{0,\max_{[0,m]}\gamma\right\}.
\]
For the modified pressure
\[
    \tilde P:=P-P_{\mathrm{atm}}+a\psi,
\]
that argument excludes an interior minimum. On the free surface,
$\tilde P=R\ge\inf_{\mathbb R}R$, while on the flat bed the Euler equations give
\[
   \tilde P_y=-g+a\psi_y<0,
\]
which excludes a minimum there as well. At either laminar end,
the limiting pressure is  bounded below by
$P_{\mathrm{atm}}+R^\pm$. Since $\psi\ge0$ and
$R^\pm\ge\inf_{\mathbb R}R$,
{we obtain}
\[
    P-P_{\mathrm{atm}}
    \ge \inf_{\mathbb R}R-a\psi
    \ge -\|R\|_{L^\infty(\mathbb R)}-am
    \qquad\text{in }\Omega_\eta.
\]

Writing the pressure in height coordinates, Bernoulli's law therefore
gives the global estimate
\[
    \frac{1+h_q^2}{2h_p^2}
    =\frac Q2+\Gamma(p)-gh-(P-P_{\mathrm{atm}})
    \le
    \frac Q2+\|\Gamma\|_{L^\infty([-m,0])}
    +\|R\|_{L^\infty(\mathbb R)}+am
    \le C
    \qquad\text{in }\Omega.
\]
Since $h_p>0$, it follows that
\[
    h_p\ge c>0,
    \qquad
    |h_q|\le C h_p\le C
    \qquad\text{in }\Omega.
\]
Combining these estimates, we obtain
\[
    \|h\|_{C_b^1(\overline\Omega)}
    +\frac{1}{\inf_{\Omega}h_p}
    \le C.
\]

	{We first derive a uniform $C_b^{2+\alpha_0}$ estimate, where $\alpha_0\in(0,\alpha]$ is an exponent supplied by the boundary regularity estimate below.} The height equation in non-divergence form \cite{constantincpam04} is
	\[
h_p^2 h_{qq} - 2h_p h_q h_{pq} + (1 + h_q^2) h_{pp} + \gamma(-p) h_p^3 = 0 \quad \text{in } \Omega,
\]
while the boundary condition on the top boundary $\mathcal T$ is
\[
\frac{1 + h_q^2}{2h_p^2} + g(h - d) - \frac{\lambda}{2} + \beta r(q) = 0 \quad \text{on } \mathcal T.
\]
Fix \( q_0 \in \mathbb{R} \) and a radius \( \rho \in (0, m/4) \), independent of \( q_0 \), and {consider the half-ball adjacent to the top boundary}
\[
B_\rho^- (q_0) := B_\rho ((q_0, 0)) \cap \Omega.
\]
The bounds $c\leq h_p\leq C$ and $|h_q|\leq C$ give uniform ellipticity: the principal matrix
\[
\begin{pmatrix}
h_p^2&-h_ph_q\\
-h_ph_q&1+h_q^2
\end{pmatrix}
\]
has determinant $h_p^2\geq c^2$ and bounded trace. The derivative of the displayed Bernoulli boundary expression with respect to its normal-gradient argument $h_p$ is
\[
-\frac{1+h_q^2}{h_p^3},
\]
whose absolute value is bounded below by a positive constant. Thus the top boundary condition is uniformly oblique. On a fixed neighborhood of the controlled range of $(h,Dh)$, the differential expression is affine in the Hessian, and its derivatives with respect to the gradient grow at most linearly in the Hessian. Its remaining first derivatives are bounded. Moreover,
\[\|\beta r\|_{C_b^{1+\alpha}(\R)}\leq C,\]
and the boundary expression has uniformly bounded $C^{1+\alpha}$ norms as a function of its arguments. These properties verify the ellipticity, boundary nondegeneracy, and structural conditions (3.1)--(3.2) in \cite[Theorem 3]{lieberman87}; the operators may be smoothly modified away from the controlled range while preserving these bounds. The boundary is flat, and $\rho$ is fixed. The localized estimate in that theorem therefore gives, for some $\alpha_0\in(0,\alpha]$ independent of $q_0$ and of the solution,
\begin{equation*}
\|h\|_{C^{2+\alpha_0}(\overline{B_{\rho/2}^-(q_0)})} \leq C.
\end{equation*}

At the bottom $\mathcal B$, we retain the homogeneous Dirichlet condition $h=0$. The localized boundary gradient H\"older estimate in \cite[Theorem 13.7 and (13.41)]{Ts} gives a uniform $C^{1+\alpha_0}$ bound on smaller bottom half-balls, after decreasing $\alpha_0$ if necessary. The interior gradient estimate in \cite[Theorem 13.6]{Ts} gives the same bound on smaller interior balls. These estimates use the established $C^1$ bound and the controlled coefficient derivatives; the boundary data and the geometry are fixed.

On each such smaller neighborhood, regard the height equation as a linear non-divergence equation for $h$, with the principal coefficients evaluated at $Dh$ and right-hand side $-\gamma(-p)h_p^3$. The $C^{1+\alpha_0}$ bound just obtained makes these coefficients and the right-hand side uniformly bounded in $C^{\alpha_0}$. The interior and Dirichlet boundary Schauder estimates \cite[Chapter 6]{Ts}, on a further smaller neighborhood, now give the required $C^{2+\alpha_0}$ bound. In particular, Theorem 13.6 supplies the gradient estimate, and the second-derivative estimate follows from this additional Schauder step. Combining these estimates with the top estimate and translating a fixed covering along the strip yields
\[
\|h\|_{C_b^{2+\alpha_0}(\bar{\Omega})} \leq C.
\]
The local radii and overlap bounds are fixed, so the global H\"older seminorm is controlled as well. All constants depend only on the quantities listed in the lemma; no bound on the initial $C_b^{3+\alpha}$ norm of $h$ has been used.
Set
\[z=w_q=h_q,\]
{and differentiate the height equation with respect to $q$. Then $z$ satisfies}
\begin{align*}
&\operatorname{div}(A(h)\nabla z) = 0 \quad &&\text{in } \Omega,\\
&-\mathbf n\cdot(A(h)\nabla z)+gz=-\beta r'(q)\quad&&\mathrm{on~}\mathcal T,\\
&z=0&&\mathrm{on~}\mathcal B,
\end{align*}
where $A(h)$ is given in \eqref{eq:principal-matrix}.
The preceding estimates imply that the coefficients of this mixed boundary value problem have uniformly bounded \( C_b^{1+\alpha_0} \) norms. The operator is uniformly elliptic and the top boundary operator is uniformly oblique by the estimates established earlier. The Schauder estimate in \cite{agmon} gives
\begin{equation*}
	\|z\|_{C_b^{2+\alpha_0}(\bar{\Omega})} \leq C \left( \|z\|_{C^0(\bar{\Omega})} + |\beta| \|r'\|_{C_b^{1+\alpha_0}(\mathbb{R})} \right).
\end{equation*}
{Solving the height equation for $h_{pp}$ gives, in $\Omega$,}
\begin{equation}\label{eq:hpp}
h_{pp} = \frac{2h_p h_q h_{pq} - h_p^2 h_{qq} - \gamma(-p)h_p^3}{1 + h_q^2}.
\end{equation}
{Since $h_q\in C_b^{2+\alpha_0}$ and $h\in C_b^{2+\alpha_0}$, the right-hand side of \eqref{eq:hpp} is bounded in $C_b^{1+\alpha_0}$. Thus $h\in C_b^{3+\alpha_0}$ and consequently $A(h)\in C_b^{2+\alpha_0}\subset C_b^{1+\alpha}$. A second application of the mixed-boundary Schauder estimate to the equation for $z=h_q$ gives
\begin{equation*}
	\|h_q\|_{C_b^{2+\alpha}(\overline\Omega)}\leq C\bigl(\|h_q\|_{C^0(\overline\Omega)}+|\beta|\|r'\|_{C_b^{1+\alpha}(\R)}\bigr).
\end{equation*}
Substitution into \eqref{eq:hpp} yields $h\in C_b^{3+\alpha}$ and hence $A(h)\in C_b^{2+\alpha}$. Applying the Schauder estimate once more gives
\begin{equation*}
	\|h_q\|_{C_b^{3+\alpha}(\overline\Omega)}\leq C\bigl(\|h_q\|_{C^0(\overline\Omega)}+|\beta|\|r'\|_{C_b^{2+\alpha}(\R)}\bigr).
\end{equation*}
{A further use of \eqref{eq:hpp} gives $h_{pp}\in C_b^{2+\alpha}$ and hence}
\[\|w\|_{C_b^{4+\alpha}(\overline\Omega)}\leq C.\]}
{Since $h=H+w$ and $H$ is fixed, the same bound holds for $h$. This completes the proof.}
\end{proof}	
	
\begin{lemma}[Compactness of bounded monotone fronts]\label{lem:compactness}
{In the supercritical setting $\sigma_0^\pm<0$, let $(\beta_n,w_n)\in\mathcal F^{-1}(0)$ be a sequence of monotone solutions.}  Assume that for some \(M,\delta>0\),
\begin{equation*}
 \|w_n\|_{C_b^{3+\alpha}}+|\beta_n|\le M,\qquad
 \inf_{\overline\Omega}(H_p+w_{np})\ge\delta,\qquad
 F_+(\beta_n)\ge 1+\delta.
\end{equation*}
{Then a subsequence converges in $\R\times\mathscr X_\infty$ to a zero of $\mathcal F$.}	
\end{lemma}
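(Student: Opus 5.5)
The plan is to obtain local compactness from the uniform regularity of Lemma~\ref{lem:regularity} and then to prevent loss of compactness at the two ends using monotonicity, the flow-force balance \eqref{eq:flow-force-balance} and Lemma~\ref{lemma:no bore}. First, the hypothesis bounds $\|w_{np}\|_{L^\infty}+|\beta_n|$. Lemma~\ref{lem:regularity} then gives $\|h_n\|_{C_b^{4+\alpha}}\le C$, uniformly in $n$.

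Pass to a subsequence with $\beta_n\to\beta$. Since $F_+(\beta_n)\ge1+\delta$, the limit satisfies $\beta<\beta_{\rm cr}$. The analyticity in Lemma~\ref{lem:downstream-state} then gives $\lambda_+(\beta_n)\to\lambda_+(\beta)$, so the endpoint profiles $K_{+,n}=H(\cdot;\lambda_+(\beta_n))$ converge in $C^4$ to $K_+=H(\cdot;\lambda_+(\beta))$; the upstream profile is always $H$. By Arzel\`a--Ascoli and a diagonal argument, $h_n\to h$ in $C^{4}_{\rm loc}(\overline\Omega)$. The limit $h$ satisfies \eqref{heighteq} with pressure $\beta r$, $\inf h_p\ge\delta$, and is (weakly) monotone in $q$.

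The key step, and the main obstacle, is \emph{uniform tail control}: for every $\epsilon>0$ there should be $Q$ with $\|h_n(q,\cdot)-K_{\pm,n}\|_{C^0}<\epsilon$ for all $\pm q>Q$ and all $n$. Suppose instead that $q_n\to+\infty$ and $|h_n(q_n,p_n)-K_{+,n}(p_n)|\ge\epsilon$. Translate to $\tilde h_n:=h_n(\cdot+q_n,\cdot)$. Since $r(\cdot+q_n)\to1$ in $C^4_{\rm loc}$, a subsequence converges in $C^4_{\rm loc}$ to a solution $h_*$ with \emph{constant} pressure $\beta$. This $h_*$ is bounded, $C_b^{4+\alpha}$, and monotone in $q$, so it has limits at $q\to\pm\infty$. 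Applying the translation argument of Lemma~\ref{lem:range-regularity} to $h_{*q}$, together with the limiting ODE as in Lemma~\ref{lemma:analytic-map}, these limits are laminar profiles $H(\cdot;\mu_\pm^*)$ satisfying the same Bernoulli condition $\mathcal Q(\mu)+\beta=\mathcal Q(\lambda)$. By \eqref{eq:flow-force-balance} with $R'\equiv0$, $\mathscr S[h_*]$ is constant, so the two ends carry equal flow force. Lemma~\ref{lemma:no bore} then forces $\mu_-^*=\mu_+^*$, and monotonicity gives $h_*\equiv H(\cdot;\mu^*)$.

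It remains to show $\mu^*=\lambda_+(\beta)$, that is, to exclude the subcritical conjugate root. I would do this by comparing depths. In the elevation case, monotonicity gives $h_n(q_n,0)\le d_+(\beta_n)$, so the depth of $h_*$ is at most $d_+(\beta)$. The function $\tilde Q$ is strictly convex with its minimum at the critical depth, and $F_+(\beta)\ge1+\delta$ places $d_+(\beta)$ strictly below the critical depth, hence strictly below the subcritical root. In the depression case, the depth is at most $d$, which is also supercritical and hence smaller than the subcritical root. Since a laminar profile is determined by its depth, $h_*=K_+$. This contradicts $|h_*(0,p_*)-K_+(p_*)|\ge\epsilon$. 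The end $q\to-\infty$ is handled identically, with pressure $0$ and profile $H$.

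Finally, uniform $C^0$ tail convergence together with local $C^4$ convergence gives $h_n\to h$ uniformly on $\overline\Omega$, with $h\to K_\pm$ at the ends. Interpolating with the uniform $C_b^{4+\alpha}$ bound upgrades this to convergence in $C_b^{3+\alpha}$. The same tail estimates, applied to derivatives via interpolation, show that $w:=h-H\in\mathscr X_\infty$ with endpoints $w_\pm$ as in Lemma~\ref{lemma:analytic-map}. Because the two-ended norm is equivalent to the $C_b^{3+\alpha}$ norm and the endpoint profiles converge, $(\beta_n,w_n)\to(\beta,w)$ in $\R\times\mathscr X_\infty$. Since $\inf h_p\ge\delta$, the limit lies in $\mathscr U$, and by continuity of $\mathcal F$ it is a zero.
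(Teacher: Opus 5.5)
Your proof is correct, but the key tail step takes a different route from the paper.

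\textbf{The paper's route.} The paper never shows that the translated limit is laminar. It integrates the variable-pressure balance \eqref{eq:flow-force-balance} along the original sequence from the downstream end, using the decay of $r'$, to obtain $\mathscr S[k;\beta](x)=\mathscr S[K^+;\beta]$ for every $x$. Letting $x\to-\infty$ and applying Lemma~\ref{lemma:no bore} to the pair $(K^l,K^+)$ identifies the far-left state of the translate directly with the true downstream state. No choice between conjugate roots is needed. The paper also treats $\beta=0$ separately by a sandwich argument.

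\textbf{Your route.} You use only the constancy of $\mathscr S$ for the constant-pressure limit $h_*$. Comparing its own two ends via Lemma~\ref{lemma:no bore}, you conclude that $h_*$ is laminar. You then select the supercritical root from the depth bound inherited from $h_n(\cdot,0)\le\max\{d,d_+(\beta_n)\}$. This bound keeps the depth of $h_*$ below the critical depth $\mathcal D(\lambda_{\rm cr})$, while any subcritical conjugate state lies above it, since depth decreases in the laminar parameter.

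\textbf{Trade-offs.} Your argument avoids the integrated variable-pressure balance, the exact-crossing choice of $q_n$, and the check $p_*>-m$. It also covers $\beta=0$ without a separate case, once you fix the sign of $\beta_n$ along a subsequence. The cost is the extra root-selection step, which the paper's flow-force matching against $K^+$ makes unnecessary.

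\textbf{One correction.} You should not obtain the convergence of $h_*$ to its end states by citing Lemma~\ref{lem:range-regularity}. That lemma presupposes convergent coefficients and invertible, i.e.\ supercritical, limiting operators at the ends of $h_*$. You only know supercriticality after the depth argument. Monotonicity together with compactness of the translates $h_*(\cdot+s_j,\cdot)$ already gives $C^3$ convergence to $q$-independent limits, as in the paper. That is all your flow-force comparison needs.
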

\begin{proof}
	{Let $h_n$ be the height function and $K_n^+$ its downstream laminar profile:}
	\begin{equation*}
		h_n(q,p)=H(p)+w_n(q,p),\qquad K_n^{+}(p):=H(\cdot;\lambda_+(\beta_n)).
	\end{equation*}
	{After passing to a subsequence, assume that $\beta_n\to\beta$. Since the mapping $\beta\mapsto\lambda_+(\beta)$ is analytic and the downstream states stay uniformly supercritical, their profiles converge to the downstream profile $K^+:=H(\cdot;\lambda_+(\beta))$ of the limiting parameter:}
	\begin{equation*}
		K_n^{+}\to K^+:=H(\cdot;\lambda_+(\beta))\in C^{4+\alpha}([-m,0]).
	\end{equation*}	
	Lemma \ref{lem:regularity} gives
	\begin{equation}\label{eq:c4est}
		\sup_n\|h_n\|_{C_b^{4+\alpha}(\overline{\Omega})}\leq C(M,\delta).
	\end{equation}
	We can extract a subsequence  such that
	\begin{equation}
		\label{coninbdd}
		h_n\to h\quad\mathrm{in~}C_{\mathrm{loc}}^{3+\alpha}(\overline{\Omega}),
	\end{equation}
	{The limit $h$ solves \eqref{heighteq} with parameter $\beta$. If $\beta=0$, monotonicity and the endpoint conditions give}
	\begin{equation*}
		\min\{H,K_n^+\}\leq h_n(q,\cdot)\leq\max\{H,K_n^+\}
	\end{equation*}
in the pointwise sense. Lemma \ref{lem:downstream-state} gives $K_n^+\to H$ and thus $h_n\to H$ {globally in the $C^0$ norm}. Together with \eqref{eq:c4est}, we obtain the convergence in $C_b^{3+\alpha}$ and $\mathscr{X}_\infty$.

	 {It remains to consider $\beta\ne0$. By Section~\ref{mono},}
	\begin{equation*}
		\sgn(\beta_n)(h_n-H)\geq 0,\quad\sgn(\beta_n)(K_n^+-h_n)\geq0,\quad\sgn(\beta_n)h_{nq}\geq0
	\end{equation*}
	throughout $\Omega\cup\mathcal T$.
	
	{We first prove that convergence to the downstream profile is uniform in $n$. Let $\ell>0$ be a horizontal cutoff distance. We claim that}
	\begin{equation}\label{uniformdown}
		{\lim_{\ell\to\infty}\sup_n\sup_{q\geq\ell,p\in[-m,0]}}|h_n(q,p)-K_n^+(p)|=0.
	\end{equation}
	{If this failed, there would be $\epsilon_0>0$ and sequences $p_n\to p_*\in[-m,0]$ and $\bar q_n\to\infty$ such that}
	\begin{equation*}
		\sgn(\beta_n)(K_n^+(p_n)-h_n(\bar q_n,p_n))\geq2\epsilon_0.
	\end{equation*}
	By the monotonicity of $q\to h_n(q,p_n)$ and
	\begin{equation*}
		\lim_{q\to\infty}h_n(q,p_n)=K_n^+(p_n),
	\end{equation*}
	{for each $n$, there is $q_n>\bar q_n$ such that}
	\begin{equation}\label{eq:cross}
		\sgn(\beta_n)(K_n^+(p_n)-h_n( q_n,p_n))=\epsilon_0.
	\end{equation}
	{Since both profiles vanish at the bed, the uniform $C^1$ bound gives}
	\begin{equation*}
		\epsilon_0=\sgn(\beta_n)(K_n^+(p_n)-h_n( q_n,p_n))\leq C(p_n+m).
	\end{equation*}
	{Thus $p_*>-m$. Define the translated heights by}
	\begin{equation*}
		\tilde{h}_n(q,p):=h_n(q+q_n,p).
	\end{equation*}
	{By \eqref{eq:c4est}, a subsequence converges to a limiting height $k$:}
	\begin{equation*}
		\tilde h_n\to k(q,p)\quad\mathrm{in~}C_{\mathrm{loc}}^{3+\alpha}(\overline\Omega).
	\end{equation*}
	Since $q_n\to\infty$, thanks to Assumption \ref{ass:pressure},
	\begin{equation*}
		\beta_n r(q+q_n)\to\beta\quad\mathrm{in~}C_{\mathrm{loc}}^{4}(\R).
	\end{equation*}
	{Consequently, $k$ solves \eqref{heighteq} with constant pressure $\beta$ and satisfies $\sgn(\beta)k_q\geq0$. Boundedness and monotonicity give a left endpoint profile, denoted by $K^l$, such that}
	\begin{equation*}
		\lim_{q\to-\infty}k(q,p)=K^l(p).
	\end{equation*}
	{To identify this endpoint and its regularity, take any sequence $s_j\to-\infty$. The translates $k(q+s_j,p)$ have a subsequence converging in $C_{\mathrm{loc}}^{3+\alpha}$. Monotonicity forces every such limit to be independent of $q$ and equal to $K^l$. Therefore,}
	\begin{equation*}
		k(q,p)\to K^l(p)\quad \mathrm{in~} C^3([-m,0]),\mathrm{~as~}q\to-\infty,
	\end{equation*}
	{and $K^l$ is a laminar solution with the same total head $Q$ and constant pressure $\beta$. Equation~\eqref{eq:cross} and monotonicity imply}
	\begin{equation}\label{r+neq}
		\sgn(\beta)(K^+(p_*)-K^l(p_*))\geq \epsilon_0.	\end{equation}
	{We now compare the flow forces to obtain a contradiction. For each fixed $x\in\R$, integrating \eqref{eq:flow-force-balance} and using Assumption~\ref{ass:pressure} gives}
	\begin{equation*}
		\mathscr S[h_n;\beta_n r](q_n+x)-\mathscr{S}[K_n^+;\beta_n]=\int_{q_n+x}^\infty\beta_n r'(s)h_n(s,0)ds\to0\quad\mathrm{as~}n\to\infty.
	\end{equation*}
That is,
\begin{equation*}
	\mathscr{S}[k;\beta](x)=\mathscr{S}[K^+;\beta]
\end{equation*}
	for any $x\in\R$. {Letting $x\to-\infty$ gives}
	\[\mathscr{S}[K^l;\beta]=\mathscr{S}[K^+;\beta].\]
	By Lemma \ref{lemma:no bore}, we have $K^+=K^l$, which contradicts \eqref{r+neq},
	so \eqref{uniformdown} holds.
	
	{The upstream argument is similar. If convergence to $H$ were not uniform, there would be $\epsilon_1>0$ and sequences $\hat q_n\to-\infty$ and $p_n\to p_*$ such that}
	\begin{equation*}
		\sgn(\beta_n)(h_n(\hat q_n,p_n)-H(p_n))\ge\epsilon_1.
	\end{equation*}
	{For each $n$, monotonicity gives a point $q_n\leq\hat q_n$ such that}
	\begin{equation}\label{r-neq}
		\sgn(\beta_n)(h_n(q_n,p_n)-H(p_n))=\epsilon_1,
	\end{equation}
	and we also have $p_*>-m$. Let
	\begin{equation*}
		\hat h_{n}(q,p):=h_n(q+q_n,p).
	\end{equation*}
	{The same compactness argument gives a limiting height $k$, after passing to a subsequence. Meanwhile,}
	\begin{equation*}
		\beta_nr(q+q_n)\to 0\quad\mathrm{in~}C_{\mathrm{loc}}^{4}(\R).
	\end{equation*}
    {Thus $k$ solves \eqref{heighteq} with zero pressure. Denote its right endpoint profile by $K^r$; the preceding translation argument gives}
    \begin{equation*}
	    	\lim_{q\to\infty}k(q,p)=K^r(p) \quad\mathrm{in~}C^3([-m,0]).
    \end{equation*}
	{The profile $K^r$ is a laminar solution with total head $Q$ and zero pressure. Equation~\eqref{r-neq} and monotonicity give}
	\begin{equation*}
		\sgn(\beta)(K^r(p_*)-H(p_*))\geq \epsilon_1.
	\end{equation*}
	{Integrating \eqref{eq:flow-force-balance} from the upstream end and using Assumption~\ref{ass:pressure} gives}
	\begin{equation*}
\mathscr{S}[k;0](x)=\mathscr{S}[H;0]
	\end{equation*}
	for any $x\in\R$.
 {Letting $x\to\infty$ gives}
	\[\mathscr{S}[K^r;0]=\mathscr{S}[H;0].\]
	By Lemma \ref{lemma:no bore}, we have $K^r=H$, which contradicts \eqref{r-neq}.
	{Therefore,}
	\begin{equation*}
		{\lim_{\ell\to\infty}\sup_n\sup_{q\leq-\ell,p\in[-m,0]}}|h_n(q,p)-H(p)|=0.
	\end{equation*}
	
	{Uniform convergence at both ends and the local convergence \eqref{coninbdd} imply $h_n\to h$ globally in the $C^0$ norm. To improve this convergence, we use the H\"older interpolation inequality for regularity exponents $0<s<t$:}
	\begin{equation}
		\label{interpholdero}
\|u\|_{C_b^s(\overline\Omega)}\leq C\|u\|_{C^0(\overline\Omega)}^{1-s/t}\|u\|^{s/t}_{C_b^t(\overline\Omega)}.
	\end{equation}
{Taking $u=h_n-h$, $s=3+\alpha$, and $t=4+\alpha$ in \eqref{interpholdero}, and writing $\theta=1/(4+\alpha)$, gives}
\begin{equation*}
	 \|h_n-h\|_{C_b^{3+\alpha}}
 \le C\|h_n-h\|_{C^0}^{\,\theta}
       \bigl(\|h_n\|_{C_b^{4+\alpha}}
             +\|h\|_{C_b^{4+\alpha}}\bigr)^{1-\theta},
 \qquad \theta=\frac1{4+\alpha},
\end{equation*}
{The global $C^0$ convergence and the uniform $C_b^{4+\alpha}$ bound therefore yield convergence in $C_b^{3+\alpha}$. The endpoint profiles converge with the same regularity, so $(\beta_n,w_n)$ converges in $\R\times\mathscr X_\infty$. Continuity of $\mathcal F$ shows that the limit is a zero of $\mathcal F$.}
\end{proof}

	\section{{Global continuation and large-amplitude fronts}}\label{sec:global-continuation}
	We use the following analytic global implicit-function theorem.  In the form
needed here, it is \cite[Theorem~B.1]{ChenWalshWheelerFronts}, which is a version
of the Dancer--Buffoni--Toland continuation theorem \cite{BuffoniToland} that retains loss of
compactness and loss of Fredholmness as explicit alternatives.

\begin{theorem}\label{thm:maximal curve}
Let \(\mathcal{W}\) and \(\mathcal{Z}\) be Banach spaces, and \(\mathcal{U} \subset \mathcal{W} \times \mathbb{R}\) an open set containing a point \((w_0, \lambda_0)\). Suppose that \(\mathcal{G}: \mathcal{U} \to \mathcal{Z}\) is real-analytic and satisfies
\[
\mathcal{G}(w_0, \lambda_0) = 0, \qquad
\mathcal{G}_w(w_0, \lambda_0): \mathcal{W} \to \mathcal{Z} \text{ is an isomorphism.} \tag{B.1}
\]
Then there exists a curve \(\mathcal{K}\) that admits the global \(C^0\) parameterization
\[
\mathcal{K} = \{ (w(s), \lambda(s)) : s \in \mathbb{R} \} \subset \mathcal{G}^{-1}(0) \cap \mathcal{U},
\]
and satisfies the following:
\begin{enumerate}
	\item[(a)] {For each $s\in\mathbb R$, the linearized operator} \(\mathcal{G}_w(w(s), \lambda(s)): \mathcal{W} \to \mathcal{Z}\) is Fredholm of index \(0\).
	\item[(b)] {At least one of the following alternatives holds in each direction, as $s\to+\infty$ and as $s\to-\infty$:}
	\begin{itemize}
		{\item[(A1)] (Blowup) The following quantity measures the size of the solution and its inverse distance to the boundary of $\mathcal U$:}
		\[
		N(s) := \| w(s) \|_{\mathcal{W}} + |\lambda(s)| + \frac{1}{\operatorname{dist}((w(s), \lambda(s)), \partial \mathcal{U})} \to \infty.
		\]
		\item[(A2)] (Loss of compactness) There exists a sequence \(s_n \to \pm\infty\) such that \(\sup_n N(s_n) < \infty\), but \((w(s_n), \lambda(s_n))\) has no convergent subsequence in \(\mathcal{W} \times \mathbb{R}\).
		\item[(A3)] (Loss of Fredholmness) There exists a sequence \(s_n \to \pm\infty\) such that \(\sup_n N(s_n) < \infty\) and \((w(s_n), \lambda(s_n)) \to (w_*, \lambda_*) \in \mathcal{W} \times \mathbb{R}\), but \(\mathcal{G}_w(w_*, \lambda_*)\) is not Fredholm of index \(0\).
		\item[(A4)] (Closed loop) There exists \(T > 0\) such that \((w(s+T), \lambda(s+T)) = (w(s), \lambda(s))\) for all \(s \in (0, \infty)\).
	\end{itemize}
	\item[(c)] Near each point \((w(s_0), \lambda(s_0)) \in \mathcal{K}\), we can locally reparameterize \(\mathcal{K}\) so that \(s \mapsto (w(s), \lambda(s))\) is real-analytic.
	\item[(d)] The curve \(\mathcal{K}\) is maximal in the sense that if \(\mathcal{J} \subset \mathcal{G}^{-1}(0) \cap \mathcal{U}\) is a locally real-analytic curve containing \((w_0, \lambda_0)\) and along which \(\mathcal{G}_w\) is Fredholm of index \(0\), then \(\mathcal{J} \subset \mathcal{K}\).
	\end{enumerate}
\end{theorem}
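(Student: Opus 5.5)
Since this is the analytic global implicit function theorem quoted from \cite[Theorem~B.1]{ChenWalshWheelerFronts}, the plan is to rerun the Dancer--Buffoni--Toland construction of \cite{BuffoniToland}. The one change is that we work on the open set where $\mathcal G_w$ is Fredholm of index $0$, and we do not assume properness. We record the failure of properness as (A2) and the failure of the Fredholm property as (A3).

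\emph{Local step.} Set
\[
\mathcal U_F:=\{(w,\lambda)\in\mathcal U:\mathcal G_w(w,\lambda)\ \text{is Fredholm of index }0\}.
\]
This set is open, because index-zero Fredholm operators form an open set and $\mathcal G_w$ is continuous. At $(w_0,\lambda_0)$ the analytic implicit function theorem gives a unique analytic curve $s\mapsto(w(s),\lambda(s))$ for $|s|$ small. At any point of $\mathcal G^{-1}(0)\cap\mathcal U_F$, a Lyapunov--Schmidt reduction turns the zero set near that point into the zero set of an analytic map between finite-dimensional spaces. Hence $\mathcal G^{-1}(0)\cap\mathcal U_F$ is locally a finite-dimensional real-analytic variety.

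\emph{Continuation step.} We use the structure theory of one-dimensional analytic varieties: Puiseux expansions and the existence of a distinguished arc through each branch point. With it we extend the curve beyond any point at which it remains inside $\mathcal U_F$. The rule is to continue along a uniquely chosen distinguished arc, so that the route is well defined and each piece has an analytic reparametrization. This gives (c). Concatenating the arcs produces the maximal curve $\mathcal K$, parametrized continuously by $s\in\mathbb R$. Maximality (d) follows by the usual argument. Any locally analytic curve through $(w_0,\lambda_0)$ along which $\mathcal G_w$ is Fredholm of index $0$ must, by analytic continuation and uniqueness, agree arc by arc with the distinguished route. Property (a) holds because we never leave $\mathcal U_F$.

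\emph{Alternatives.} Fix a direction, say $s\to+\infty$, and suppose (A1)--(A3) all fail. Then every sequence $s_n\to\infty$ has a subsequence with $N(s_n)$ bounded, since (A1) fails. Such a subsequence has a convergent subsequence, since (A2) fails. Its limit lies in $\mathcal U_F$, since (A3) fails and $N$ bounded keeps the limit away from $\partial\mathcal U$. So the $\omega$-limit set of the forward curve is a nonempty compact subset of $\mathcal G^{-1}(0)\cap\mathcal U_F$. By compactness it is covered by finitely many neighbourhoods in which the variety consists of finitely many analytic arcs. The distinguished route can traverse only finitely many distinct arcs there, so it must revisit an arc it has already used. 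By the uniqueness of the route choice it then repeats periodically, which is (A4).

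The main obstacle is this last step: showing that a route confined to a compact region of a one-dimensional analytic variety must close into a loop. This needs a careful account of the route-selection rule at branch points, in particular that a route cannot re-enter an arc it has already traversed without closing up. For that I would follow the Buffoni--Toland argument essentially verbatim \cite{BuffoniToland}.
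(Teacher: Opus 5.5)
The paper gives no proof of this theorem. It imports it from \cite[Theorem~B.1]{ChenWalshWheelerFronts}, described as the Dancer--Buffoni--Toland theorem with loss of compactness and loss of Fredholmness kept as explicit alternatives. Your plan of rerunning that construction on the open set $\mathcal U_F$ is the intended route, and your local step, continuation step, and (a), (c), (d) are acceptable as a sketch. There is, however, a concrete error in how you derive the alternatives.

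The negation of (A1) is not that every sequence $s_n\to\infty$ has a subsequence with $N(s_n)$ bounded. It is only $\liminf_{s\to\infty}N(s)<\infty$: there is \emph{one} sequence $s_n\to\infty$ with $\sup_n N(s_n)<\infty$, while $N$ may oscillate with $\limsup_{s\to\infty}N(s)=\infty$. The negations of (A2) and (A3) say nothing about sequences along which $N$ is unbounded. So you cannot conclude that the forward curve is precompact, or that its $\omega$-limit set is a compact subset of $\mathcal G^{-1}(0)\cap\mathcal U_F$, and the finite cover you invoke is unavailable.

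The fix is to argue at a single point. Take the sequence supplied by the failure of (A1). By the failure of (A2), a subsequence converges to some $p_*$. This limit lies in $\mathcal U$, since $\operatorname{dist}(\cdot,\partial\mathcal U)$ stays bounded below; it is a zero of $\mathcal G$; and it lies in $\mathcal U_F$ by the failure of (A3).

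After Lyapunov--Schmidt reduction at $p_*$, the local structure of one-dimensional semianalytic sets gives a small ball $B$ about $p_*$ with the following property. Inside $B$, the closure of the regular set (zeros where $\mathcal G_w$ is invertible, which together with arc endpoints are the only points the route visits) consists of $p_*$ and finitely many analytic half-branches running from $p_*$ to $\partial B$.

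The route is monotone along each arc, and it is continued through $p_*$ and through the points of $\partial B$ at finite parameter values, since these lie in $\mathcal U_F$. Hence every visit to the concentric ball of half the radius belongs to a passage through $p_*$: the route comes in along one half-branch and leaves along another. Each such passage occupies a bounded parameter interval, and visits occur at arbitrarily large $s$, so there are infinitely many passages.

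By pigeonhole, the route arrives at $p_*$ along the same half-branch at two times $s_1<s_2$. Forward uniqueness of the Puiseux continuation rule makes the route after $s_2$ a copy of the route after $s_1$. The rule is symmetric, so backward uniqueness gives the same for the earlier portions. Thus $T=s_2-s_1$ is a period, which is (A4).

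This local argument replaces your global compactness claim. It also removes the orientation issue you flag as the main obstacle, because two arrivals along the same half-branch automatically have the same orientation.
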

{Let $\mathcal C_{\mathrm{loc}}$ denote the local solution curve constructed in Theorem~\ref{thm:local}:}
\begin{equation}\label{eq:localcurve}
	\mathcal{C}_\mathrm{loc}=\{(\beta,w(\beta)):|\beta|<\varepsilon\},
\end{equation}
	{For $\varepsilon>0$ sufficiently small, Theorem~\ref{thm:local} gives $\beta h_q>0$ whenever $0<|\beta|<\varepsilon$. Denote the two nontrivial half-branches of \eqref{eq:localcurve} by}
	\begin{equation*}
		\mathcal{C}_{\mathrm{loc}}^+=\mathcal{C}_{\mathrm{loc}}\cap\{\beta>0\}\mathrm{~and~}\mathcal{C}_{\mathrm{loc}}^-=\mathcal{C}_{\mathrm{loc}}\cap\{\beta<0\}.
	\end{equation*}
	
{Theorem~\ref{thm:maximal curve} extends the local curve to a maximal continuous curve, denoted by}
\begin{equation} \label{zyy}
	\mathcal{C}=\{(\beta(s),w(s)):s\in\R\}
\end{equation}
	satisfying
	\begin{equation*}
		(\beta(0),w(0))=(0,0)\mathrm{~and~}\mathcal{F}(\beta(s),w(s))=0.
	\end{equation*}
Both endpoint states remain supercritical along $\mathcal C$: their laminar parameters vary continuously and cannot cross $\lambda_{\rm cr}$, since the upstream Bernoulli value is fixed at $\mathcal Q(\lambda)>\mathcal Q(\lambda_{\rm cr})$, while a critical downstream state would force $\beta=\beta_{\rm cr}$, excluded by $\mathscr U$.
	{Choose the parameter near the origin so that $\beta(s)=s$, and orient the global curve accordingly. For $s\geq0$, write $(\beta^\pm(s),w^\pm(s))=(\beta(\pm s),w(\pm s))$ for the two directions. Choose $s_0>0$ sufficiently small that}
	\begin{equation*}
		{\beta^+(s)>0,\qquad\beta^-(s)<0,\qquad 0<s<s_0.}
	\end{equation*}
	{After shortening the local curve if necessary, its half-branches can also be written as}
\begin{equation*}
	{\begin{aligned}
\mathcal C_{\mathrm{loc}}^+&=\{(\beta^+(s),w^+(s)):0<s<s_0,\ \beta^+(s)>0\},\\
\mathcal C_{\mathrm{loc}}^-&=\{(\beta^-(s),w^-(s)):0<s<s_0,\ \beta^-(s)<0\}.
\end{aligned}}
\end{equation*}
	and we denote their maximal continuations, together with $(0,0)$, by $\mathcal C^+$ and $\mathcal C^-$, respectively. {Proposition~\ref{prop:mono} below proves that their signs persist globally.} By \eqref{nodaldep} and \eqref{nodalele}, we have the following proposition.

\begin{prop}	[Global monotonicity and exclusion of closed loops]\label{prop:mono}
	{Let $h^\pm(s)=H+w^\pm(s)$ be the height profiles along $\mathcal C^\pm$. Then, for every $s>0$,}
	\begin{equation}\label{eq:nodalele}
		\beta^+(s)>0,\quad h_q^+(s)>0\mathrm{~on~}\Omega\cup\mathcal{T},
			\end{equation}
			and
			\begin{equation}\label{eq:nodaldep-global}
				\beta^-(s)<0,\quad h_q^-(s)<0\mathrm{~on~}\Omega\cup\mathcal{T}.
			\end{equation}
			{Writing $K_+(\beta):=H(\cdot;\lambda_+(\beta))$ for the downstream profile, we have, on $\mathcal C^+$,}
			\begin{equation*}
				H(p)< h^+(q,p)< {K_+(\beta^+)}(p),
			\end{equation*}
			and on $\mathcal{C}^-$, we have
			\begin{equation*}
				{K_+(\beta^-)}(p)<h^-(q,p)<H(p)
			\end{equation*}
			{for every $q\in\R$ and $p\in(-m,0]$. In particular, $\mathcal C^+\cap\mathcal C^-=\{(0,0)\}$, and neither branch forms a closed loop.}
\end{prop}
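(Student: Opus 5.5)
We argue by a standard open–closed (continuity) argument along each half-branch, using Lemma~\ref{lem:open} and Lemma~\ref{lem:closed}. We treat $\mathcal C^+$; the case of $\mathcal C^-$ is identical after replacing $h_q$ by $-h_q$ and using part (b) of each lemma. Define
\[
I:=\{s>0:\ \beta^+(t)>0\ \text{and}\ h^+_q(t)>0 \text{ on }\Omega\cup\mathcal T \text{ for all } t\in(0,s]\}.
\]
By Theorem~\ref{thm:local}, $(0,s_0)\subset I$ after shrinking $s_0$, so $I$ is nonempty and is an interval starting at $0$. Let $s^*=\sup I$ and suppose for contradiction that $s^*<\infty$. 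The map $s\mapsto(\beta^+(s),w^+(s))$ is continuous into $\R\times\mathscr X_\infty$. By continuity, $\beta^+(s^*)\ge0$, and Lemma~\ref{lem:closed}(a) applies with $s_n\uparrow s^*$. It yields either $(\beta^+(s^*),w^+(s^*))=(0,0)$, or $\beta^+(s^*)>0$ with $h_q^+(s^*)>0$.

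In the second case, the convergence in $\mathscr X_\infty$ controls the $C_b^3$ norm. Lemma~\ref{lem:open}(a) together with continuity of $\beta^+$ then gives monotonicity on a neighbourhood $(s^*-\epsilon,s^*+\epsilon)$, which contradicts maximality of $s^*$.

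The main obstacle is to exclude the first case, in which the curve returns to the origin at some $s^*>0$. Here we use the local uniqueness in Theorem~\ref{thm:local}: near $(0,0)$, every zero of $\mathcal F$ in $(-\varepsilon,\varepsilon)\times\mathcal V$ lies on $\mathcal C_{\rm loc}$. Since $\beta^+>0$ on $(0,s^*)$, for $s$ slightly less than $s^*$ the point $(\beta^+(s),w^+(s))$ belongs to $\mathcal C^+_{\rm loc}$. Since $\mathcal C$ is locally analytic (Theorem~\ref{thm:maximal curve}(c)) and $\mathcal F_w(0,0)$ is invertible, the curve near $(0,0)$ is exactly the graph $\beta\mapsto w(\beta)$. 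Returning to the origin therefore forces $\mathcal C$ to be a closed loop. In that case the portion $\{s<0\}$ would be traversed after the return, and the arc just before $s^*$ would lie in $\{\beta<0\}$ near the origin, which contradicts $\beta^+>0$ there. If the distinguished parametrization makes this bookkeeping delicate, an alternative is available: a return to $(0,0)$ along a monotone arc would give, through the bounds below, a family of monotone solutions with $\beta\downarrow0$ that stay away from $\mathcal C_{\rm loc}$. This is impossible by implicit-function uniqueness together with Lemma~\ref{lem:zeropressure}. Hence $s^*=\infty$, which proves \eqref{eq:nodalele}.

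The strict bounds $H<h^+<K_+(\beta^+)$ for $p\in(-m,0]$ then follow from strict monotonicity in $q$ and the endpoint limits of Lemma~\ref{lemma:analytic-map}. Since $h_q^+>0$, the limits are $h^+(q,p)\to H(p)$ as $q\to-\infty$ and $h^+(q,p)\to K_+(\beta^+)(p)$ as $q\to+\infty$, and strict monotonicity makes both inequalities strict at interior and top points. The same argument gives the reversed bounds on $\mathcal C^-$. Finally, $\beta>0$ on $\mathcal C^+\setminus\{(0,0)\}$ and $\beta<0$ on $\mathcal C^-\setminus\{(0,0)\}$, so $\mathcal C^+\cap\mathcal C^-=\{(0,0)\}$. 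A closed loop (A4) would force $\mathcal C^+$ to return to $(0,0)$ and then enter $\mathcal C^-$, which is excluded by what we have shown.
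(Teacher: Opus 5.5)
Your open--closed argument via Lemmas~\ref{lem:open} and~\ref{lem:closed} follows the paper's proof, as do your strict bounds from monotonicity plus endpoint limits and the conclusion $\mathcal C^+\cap\mathcal C^-=\{(0,0)\}$. The gap is in the step you yourself call the main obstacle: excluding a return to $(0,0)$ at some $s^*>0$.

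You correctly deduce from $\beta^+>0$ on $(0,s^*)$ and local uniqueness that the curve must arrive at the origin along $\mathcal C^+_{\rm loc}$. That means it traverses the initial arc $\{(\beta(s),w(s)):0<s<s_0\}$ in reverse. You then assert that a return ``forces a closed loop'' in which the arc just before $s^*$ is the arc $\{s<0\}$, so that $\beta<0$ there. This presupposes that $s^*$ is a period, i.e.\ that the curve cannot come back along an arc it has already traversed. That is precisely what must be proved, and it contradicts the arrival direction you have just derived. Nothing in Theorem~\ref{thm:maximal curve} as stated excludes such a turn-back: not the alternatives (A1)--(A4), not the local analytic reparametrization (c), and not maximality (d). In that scenario the curve retraces itself rather than closing up periodically.

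Your fallback also fails. As $s\uparrow s^*$ the solutions converge to $(0,0)$ in $\R\times\mathscr X_\infty$, so by local uniqueness they lie \emph{on} $\mathcal C^+_{\rm loc}$ rather than staying away from it. They are monotone with $\beta>0$, which is consistent with both implicit-function uniqueness and Lemma~\ref{lem:zeropressure}, so no contradiction arises.

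The missing ingredient is the no-retracing property of the distinguished-arc route underlying Theorem~\ref{thm:maximal curve}: before a loop closes, the route never traverses a nonempty open subarc it has already traversed. At a first such overlap, the two local analytic parametrizations agree on an open subarc, and uniqueness of analytic continuation identifies the incoming arc with the earlier one, contradicting the first-overlap choice. The paper closes the argument this way. A terminal segment of a return cannot lie in $\Gamma^+_{\rm loc}=\mathcal C^+_{\rm loc}$, since that would retrace the initial segment. It cannot lie in $\Gamma^-_{\rm loc}$ either, because every nontrivial point reached along the positive branch has $\beta>0$. Replacing your closed-loop claim with this property, and dropping the fallback, repairs the proof. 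Your final exclusion of (A4) then goes through, since a periodic curve would revisit the origin at some $T>0$, contradicting $\beta^+(T)>0$.
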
	
	\begin{proof}
		{Let $\mathcal N^+$ be the subset of $\mathcal C^+$ satisfying the positive nodal properties:}
		\begin{equation*}
			\mathcal{N}^+ = \left\{ (\beta,w) \in \mathcal{C}^+ : \beta > 0, \quad h_q > 0 \text{ in } \Omega \cup T, \quad h_{pq} > 0 {\text{ on }\mathcal B} \right\}.
		\end{equation*}
		{Theorem~\ref{thm:local} and Hopf's lemma give $\mathcal N^+\ne\emptyset$.} Moreover, Lemma \ref{lem:open} and Lemma \ref{lem:closed} imply that $\mathcal N^+$ is both relatively open and relatively closed away from $(0,0)$. Hence it contains the continuation of $\mathcal C_{\rm loc}^+$ up to a possible return to $(0,0)$. Likewise, we define
		\[\mathcal{N}^- = \left\{ (\beta,w) \in \mathcal{C}^- : \beta < 0, \quad h_q < 0 \text{ in } \Omega \cup T, \quad h_{pq} < 0 {\text{ on }\mathcal B} \right\},\]
		and obtain the same conclusion for $\mathcal C^-$. Lemma \ref{lem:zeropressure} shows that the only possible return point with $\beta=0$ is $(0,0)$. We now rule out such a return. {Local uniqueness at $(0,0)$ gives a neighborhood in which the nontrivial zero set consists of exactly two half-branches. Denoting them by $\Gamma_{\mathrm{loc}}^\pm$, we have, within this local zero set,}
 \begin{equation*}
 	\Gamma^+_{\mathrm{loc}}=\{\beta>0,h_q>0\}\mathrm{~and~}\Gamma^-_{\mathrm{loc}}=\{\beta<0,h_q<0\}.
 	\end{equation*}
	{Suppose the global analytic curve starting from $\Gamma^+_{\mathrm{loc}}$ forms a loop and returns to the origin. Since $\mathcal F_w(0,0)$ is an isomorphism, the implicit function theorem identifies the local zero set with a single embedded analytic arc having exactly the two half-branches $\Gamma^+_{\mathrm{loc}}$ and $\Gamma^-_{\mathrm{loc}}$. We use the no-retracing parameterization furnished by the distinguished-arc construction underlying Theorem \ref{thm:maximal curve}: before a loop closes, the route does not traverse a nonempty open subarc already traversed. Indeed, at a first such overlap the two local analytic normalizations agree on an open subarc, and uniqueness of analytic continuation identifies the incoming distinguished arc with the earlier one, contradicting the first-overlap choice. Hence, if the route leaves $(0,0)$ through $\Gamma^+_{\mathrm{loc}}$ and later closes there, its terminal local segment cannot lie in $\Gamma^+_{\mathrm{loc}}$, since that would retrace the initial local segment. It must therefore enter through $\Gamma^-_{\mathrm{loc}}$. However, the construction of $\mathcal N^+$ shows that every nontrivial solution on the positive curve satisfies}
 	\[\beta>0,\quad h_q>0.\]
	{Thus it can meet only $\Gamma_{\mathrm{loc}}^+$ and cannot return through $\Gamma_{\mathrm{loc}}^-$. This contradiction rules out a return of $\mathcal C^+$ to the origin; the same argument applies to $\mathcal C^-$.} Thus \eqref{eq:nodalele} and \eqref{eq:nodaldep-global}  hold throughout the two curves, and the proof is complete.
	\end{proof}

{We now relate the abstract alternatives in Theorem \ref{thm:maximal curve} to the explicit exhaustion below. If the abstract exhaustion is bounded, then $|\beta|+\|w\|_{C_b^{3+\alpha}}$ is bounded and the distance from $\partial\mathscr U$ is positive; hence $\inf_\Omega h_p$ and $\beta_{\rm cr}-\beta$ are bounded below. By Lemma \ref{lem:downstream-state}, this also gives $F_+(\beta)-1\geq\delta>0$. Lemma \ref{lem:compactness} then rules out (A2). Alternative (A3) is also impossible: its limit remains in $\mathscr U$, and Proposition \ref{prop:fredholm} says that the limiting linearization is Fredholm of index zero. Alternative (A4) was excluded above. Conversely, boundedness of the explicit quantities in \eqref{eq:blowupfunc} keeps both defining inequalities of \eqref{eq:U-open} uniformly strict (using again the strict monotonicity in Lemma \ref{lem:downstream-state}), and therefore keeps a positive distance from $\partial\mathscr U$. Thus the only remaining alternative, (A1), is equivalently {expressed by the divergence of the following explicit exhaustion function $N(s)$:}}
	\begin{equation}\label{eq:blowupfunc}
	N(s) = \|w(s)\|_{C_b^{3+\alpha}} + |\beta(s)| + \frac{1}{\inf_{\Omega} h_p(s)} + \frac{1}{F_+(\beta(s)) - 1}\to\infty.
	\end{equation}
{The conjugate flow equation and the regularity estimate now reduce \eqref{eq:blowupfunc} to the alternatives in our main theorem.}

\begin{theorem}[Main result]\label{thm:main}
Under the assumptions of Theorem~\ref{thm:local}, denote by $\mathcal C^+$ and $\mathcal C^-$ the positive and negative branches, respectively, of the global solution curve $\mathcal C$ given in \eqref{zyy}.
Then we have
\begin{equation}\label{eq:exhaustblowup}
	\sup_{(\beta,w) \in \mathcal C^\pm} N(\beta,w) = \infty,
\end{equation}
where $N$ is the exhaustion function defined in \eqref{eq:blowupfunc}. Moreover, every nontrivial solution on $\mathcal C^-$ has the depression properties, and at least one of the following alternatives holds:
\begin{enumerate}
	\item [$(i^-)$](Loss of uniform unidirectionality) $\sup_{{(\beta,w)\in\mathcal C^-}} \|h_p\|_{C^0(\overline\Omega)} = \infty$;
	\item [$(ii^-)$] $\inf_{(\beta,w)\in\mathcal C^-}\beta=-\infty$.
\end{enumerate}

{Every nontrivial solution on $\mathcal C^+$ has the elevation properties, and at least one of the following alternatives holds:}
\begin{enumerate}
		\item [$(i^+)$](Loss of uniform unidirectionality) $\sup_{{(\beta,w)\in\mathcal C^+}} \|h_p\|_{{C^0(\overline\Omega)}} = \infty$;
		\item [$(ii^+)$] $\inf_{(\beta,w) \in \mathcal{C}^+} (F_+( \beta ) - 1) = 0$.
\end{enumerate}
\end{theorem}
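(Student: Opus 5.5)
The divergence \eqref{eq:exhaustblowup} of the exhaustion function comes directly from the discussion preceding the theorem. Apply Theorem~\ref{thm:maximal curve} to $\mathcal F$ on $\mathscr U$ at $(0,0)$; Proposition~\ref{prop:f0inverse} supplies the isomorphism hypothesis. Along each half-branch $\mathcal C^\pm$ we rule out three of the four alternatives. (A4) is excluded by Proposition~\ref{prop:mono}. (A3) is excluded by Proposition~\ref{prop:fredholm}, since any limit point with bounded $N$ stays in $\mathscr U$ with both ends supercritical. (A2) is excluded by Lemma~\ref{lem:compactness}, whose hypotheses hold whenever $N$ stays bounded. This leaves (A1). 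We then check that (A1) is equivalent to the divergence of the explicit $N$ in \eqref{eq:blowupfunc}, using Lemma~\ref{lem:downstream-state} to translate $\operatorname{dist}$ to $\{\beta=\beta_{\rm cr}\}$ into $F_+-1$. The depression and elevation properties are exactly \eqref{eq:nodaldep-global} and \eqref{eq:nodalele}, together with the Hopf statements $h_{pq}\gtrless0$ on $\mathcal B$ encoded in $\mathcal N^\pm$.

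To reduce $\sup N=\infty$ to the stated alternatives, argue by contradiction on each branch. On $\mathcal C^-$, suppose $\sup\|h_p\|_{C^0}<\infty$ and $\inf\beta>-\infty$. Since $\beta<0<\beta_{\rm cr}$ on $\mathcal C^-$, we have $|\beta|\le M$. Moreover $\lambda_+(\beta)\ge\lambda$ for $\beta\le0$ by monotonicity of $\mathcal Q$, so $F_+(\beta)\ge F>1$ and the term $1/(F_+-1)$ is bounded. Because $H_p$ is fixed, $\|w_p\|_{L^\infty}\le\|h_p\|_{C^0}+\|H_p\|_{C^0}$ is bounded. Lemma~\ref{lem:regularity} then gives $\inf h_p\ge C^{-1}$ and $\|h\|_{C_b^{4+\alpha}}\le C$, hence a bound on $\|w\|_{C_b^{3+\alpha}}$. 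Therefore $N$ is bounded on $\mathcal C^-$, contradicting \eqref{eq:exhaustblowup}. This yields $(i^-)$ or $(ii^-)$.

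On $\mathcal C^+$, suppose instead that $\sup\|h_p\|_{C^0}<\infty$ and $F_+(\beta)-1\ge\delta>0$. Then $0<\beta<\beta_{\rm cr}$ bounds $|\beta|$ automatically, and the Froude term is bounded by assumption. Lemma~\ref{lem:regularity} again bounds $1/\inf h_p$ and $\|w\|_{C_b^{3+\alpha}}$, so $N$ is bounded, which is a contradiction. Hence $(i^+)$ or $(ii^+)$ holds.

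The only delicate point is the bookkeeping that the abstract blowup in Theorem~\ref{thm:maximal curve}, stated in terms of $\operatorname{dist}(\cdot,\partial\mathscr U)$, really controls each term of the explicit $N$, and conversely. In particular, the boundary $\inf(H_p+w_p)=0$ is handled by the lower bound on $h_p$ in Lemma~\ref{lem:regularity}, which depends only on $\|w_p\|_{L^\infty}$ and $|\beta|$. Everything else follows from results already established. We also note that the interpretation ``loss of uniform unidirectionality'' corresponds to $h_p\to\infty$, i.e.\ $c-u=1/h_p\to0$.
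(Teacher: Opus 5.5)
Your proposal is correct and follows essentially the same route as the paper. You read off the blowup of $N$ from the reduction to (A1) preceding the theorem (Proposition~\ref{prop:mono}, Proposition~\ref{prop:fredholm}, Lemma~\ref{lem:compactness}), and then obtain each pair of alternatives by contradiction through Lemma~\ref{lem:regularity}, using $F_+(\beta)\ge F>1$ for $\beta\le0$ on $\mathcal C^-$ and $0<\beta<\beta_{\rm cr}$ on $\mathcal C^+$. Your bookkeeping ($\|w_p\|_{L^\infty}\le\|h_p\|_{C^0}+\|H_p\|_{C^0}$ and $\lambda_+(\beta)\ge\lambda$ from the monotonicity of $\mathcal Q$) only spells out steps the paper states briefly.
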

	\begin{proof}
It follows from \eqref{eq:blowupfunc} that \eqref{eq:exhaustblowup} holds obviously.
		For the negative branch $\mathcal{C}^-$,  the depression property is given by Proposition \ref{prop:mono}. Assume that neither  $(i^-)$ nor $(ii^-)$ holds. {Then there is a constant $M>0$ such that, for every nontrivial solution on $\mathcal C^-$,}
		\begin{equation*}
			\|w_p\|_{C^0(\overline\Omega)}\leq M \mathrm{~and~}0>\beta\geq -M.
		\end{equation*}
		Thanks to Lemma \ref{lem:downstream-state}, we have
		\begin{equation*}
			F_+(\beta)>F_+(0)>1.
		\end{equation*}
		Therefore, there exists some $\delta>0$ such that
		\begin{equation*}
			F_+(\beta)\geq 1+\delta.
		\end{equation*}
		Together with Lemma \ref{lem:regularity}, we  obtain
		\[\inf_{\Omega} h_p \geq c> 0, \quad \|w\|_{C_b^{3+\alpha}} \leq C.\]
		Therefore, we have $\sup_{\mathcal C^-} N(\beta,w) < \infty$, which contradicts \eqref{eq:exhaustblowup}. {For $\mathcal C^+$, failure of $(ii^+)$ bounds $F_+-1$ away from zero, while $0<\beta<\beta_{\rm cr}$ bounds the pressure strength. If $(i^+)$ also fails, the same regularity estimate again bounds the exhaustion function, giving the required contradiction.}
	\end{proof}
\begin{remark}
Since $c-u=1/h_p$, the alternatives $(i^\pm)$ mean that the horizontal relative velocity approaches zero along a sequence of solutions, that is, these branches lose uniform unidirectionality. In alternative $(ii^-)$, $d_+(\beta)\to0$ along any sequence with $\beta\to-\infty$. In alternative $(ii^+)$, the downstream flow approaches a critical state along a sequence on the branch.
\end{remark}
		
\section{An explicit example satisfying Assumption~\ref{ass:pressure}}
	
Here we give an explicit example, where the resulting pressure profile satisfies Assumption~\ref{ass:pressure}. In this construction the surface and pressure are determined from a chosen stream function.
	
	\begin{example}\label{lem:explicit-front}
		{Let $\gamma(\psi)=\gamma_0$ be constant, fix the upstream depth $d>0$, and let $\mu>0$ denote the magnitude of the upstream relative horizontal velocity at the bed. Write $a$ for the corresponding magnitude at the free surface, and assume}
		\begin{equation}\label{eq:explicit-supercritical}
			a:=\mu+\gamma_0d>0,
			\qquad
			\mu(\mu+\gamma_0d)>gd.
		\end{equation}
		{The corresponding relative mass flux is}
		\begin{equation*}
			m=\mu d+\frac{\gamma_0d^2}{2}.
		\end{equation*}
		{For $k>0$, let $\Theta_k$ denote the imaginary part of the principal logarithm of $1+e^{k(x+iy)}$, as specified below. There is a sufficiently small $k>0$ such that, for every sufficiently small nonzero amplitude $\varepsilon$, the stream function}
		\begin{equation}\label{eq:explicit-stream}
			\psi_\varepsilon(x,y)
			=m-\mu y-\frac{\gamma_0y^2}{2}
			-\frac{\varepsilon}{k}\Theta_k(x,y),
		\end{equation}
		where
		\begin{equation}\label{eq:explicit-theta}
			\Theta_k(x,y)
			:=\Im\log\bigl(1+e^{k(x+iy)}\bigr)
			=\arctan\left(\frac{e^{kx}\sin(ky)}
			{1+e^{kx}\cos(ky)}\right),
		\end{equation}
		determines a non-stagnant front.  More precisely, there is a unique
		{free surface $y=\eta_\varepsilon(x)$ on which $\psi_\varepsilon=0$. In this example, $\eta_\varepsilon$ denotes the full surface height above the bed; the surface elevation in the earlier notation is $\eta_\varepsilon-d$. Denoting the downstream depth by $d_+(\varepsilon)$, its endpoint depths satisfy}
		\begin{equation}\label{eq:explicit-depth-limits}
			\lim_{x\to-\infty}\eta_\varepsilon(x)=d,
			\qquad
			\lim_{x\to+\infty}\eta_\varepsilon(x)=d_+(\varepsilon),
		\end{equation}
		where $d_+(\varepsilon)$ is determined by
		\begin{equation}\label{eq:explicit-downstream-depth}
			m=(\mu+\varepsilon)d_+(\varepsilon)
			+\frac{\gamma_0d_+(\varepsilon)^2}{2}.
		\end{equation}
Here uniqueness refers to the surface near $y=d$ satisfying $-(\psi_\varepsilon)_y>0$ in the fluid, and $d_+(\varepsilon)$ denotes the root tending to $d$ as $\varepsilon\to0$.
		Moreover,
		\begin{equation}\label{eq:explicit-monotone-surface}
			\sgn \eta_\varepsilon'(x)=-\sgn\varepsilon.
		\end{equation}
		
		{Define the surface pressure perturbation $R_\varepsilon$ from Bernoulli's law by}
		\begin{equation}\label{eq:explicit-pressure}
			R_\varepsilon(x)
			:=\frac{a^2}{2}+gd
			-\frac12\left|\nabla\psi_\varepsilon
			\bigl(x,\eta_\varepsilon(x)\bigr)\right|^2
			-g\eta_\varepsilon(x)
		\end{equation}
		{and define its downstream value $\beta_\varepsilon$ by}
		\begin{equation*}
			\beta_\varepsilon
			:=\frac{a^2}{2}+gd
			-\frac12\bigl(\mu+\varepsilon
			+\gamma_0d_+(\varepsilon)\bigr)^2
			-gd_+(\varepsilon).
		\end{equation*}
		Then $R_\varepsilon(-\infty)=0$,
		$R_\varepsilon(+\infty)=\beta_\varepsilon$ (see Figure 1), and, after taking
		$|\varepsilon|$ smaller if necessary,
		\begin{equation}\label{eq:explicit-monotone-pressure}
			\sgn R_\varepsilon'(x)=\sgn\beta_\varepsilon
			=-\sgn\varepsilon.
		\end{equation}
		{Consequently, the normalized pressure profile defined by}
		\begin{equation*}
			r_\varepsilon(x):=\frac{R_\varepsilon(x)}{\beta_\varepsilon}
		\end{equation*}
		satisfies Assumption~\ref{ass:pressure}.
	\end{example}
\begin{figure}
	\centering
	\includegraphics[width=0.7\linewidth]{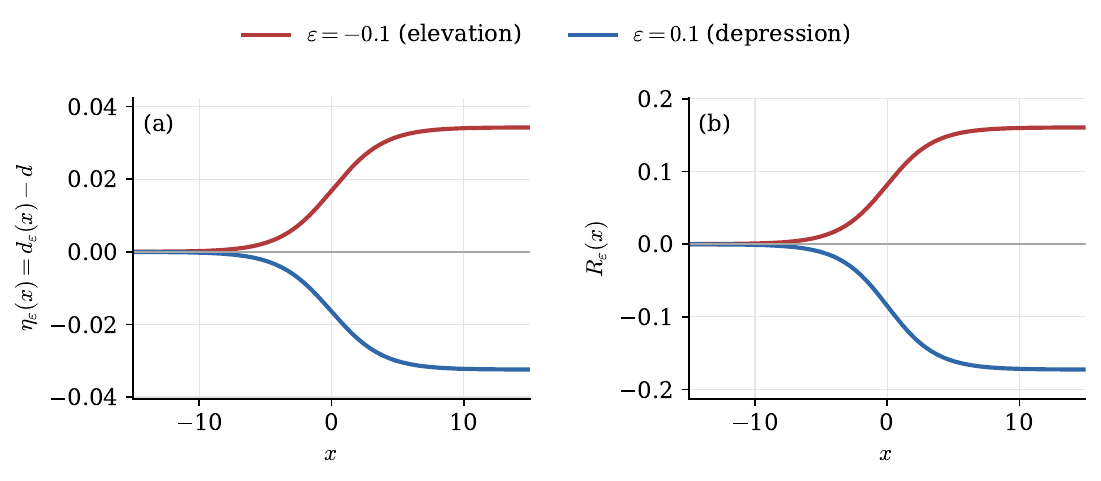}
	\caption{Profiles of the explicit fronts and pressures.}
	\label{fig:constant-vorticity}
\end{figure}

	\begin{proof}[{Verification of the example}]
{The function $\Theta_k$ is harmonic wherever the logarithm is}
		analytic.  We take $k>0$ sufficiently small so that $kd<\pi/2$.
		Since $d_+(\varepsilon)\to d$ as $\varepsilon\to0$, after decreasing
		$|\varepsilon|$ we also have
		\begin{equation}\label{eq:explicit-strip-condition}
			k\max\{d,d_+(\varepsilon)\}<\frac{\pi}{2}.
		\end{equation}
		Thus the expression in \eqref{eq:explicit-theta} is single-valued throughout
		the whole fluid domain.  It follows directly that
		\begin{equation*}
			-\Delta\psi_\varepsilon=\gamma_0,
			\qquad
			\psi_\varepsilon(x,0)=m.
		\end{equation*}
	
		Direct differentiation gives
		\begin{equation}\label{eq:explicit-theta-derivatives}
			(\Theta_k)_x=\frac{ke^{kx}\sin(ky)}{1+2e^{kx}\cos(ky)+e^{2kx}},
			\qquad
			(\Theta_k)_y=\frac{ke^{kx}(e^{kx}+\cos(ky))}{1+2e^{kx}\cos(ky)+e^{2kx}}.
		\end{equation}
		Hence
		\begin{equation}\label{eq:explicit-psi-derivatives}
			(\psi_\varepsilon)_x
			=-\varepsilon\frac{e^{kx}\sin(ky)}{1+2e^{kx}\cos(ky)+e^{2kx}},
			\qquad
			-(\psi_\varepsilon)_y
			=\mu+\gamma_0y
			+\varepsilon\frac{e^{kx}(e^{kx}+\cos(ky))}{1+2e^{kx}\cos(ky)+e^{2kx}}.
		\end{equation}
		Under \eqref{eq:explicit-strip-condition},
		\begin{equation*}
			0<\frac{e^{kx}(e^{kx}+\cos(ky))}{1+2e^{kx}\cos(ky)+e^{2kx}}<1.
		\end{equation*}
		Together with \eqref{eq:explicit-supercritical}, this shows that
		$-(\psi_\varepsilon)_y>0$ for sufficiently small
		$|\varepsilon|$.  The implicit function theorem therefore gives the
		unique surface $y=\eta_\varepsilon(x)$.  Letting $x\to\pm\infty$ in
		\eqref{eq:explicit-stream} gives
		\begin{equation*}
			\Theta_k(x,y)\to0\quad\text{as }x\to-\infty,
			\qquad
			\Theta_k(x,y)\to ky\quad\text{as }x\to+\infty,
		\end{equation*}
		which proves \eqref{eq:explicit-depth-limits} and
		\eqref{eq:explicit-downstream-depth}.  Differentiating
		$\psi_\varepsilon(x,\eta_\varepsilon(x))=0$ and using
		\eqref{eq:explicit-psi-derivatives}, we obtain
		\begin{equation*}
			\eta_\varepsilon'(x)
			=-\frac{(\psi_\varepsilon)_x}
			{(\psi_\varepsilon)_y}
			{\bigg|_{y=\eta_\varepsilon(x)}}.
		\end{equation*}
		Since $0<k\eta_\varepsilon(x)<\pi/2$, this proves
		\eqref{eq:explicit-monotone-surface}.
		
		We now verify the monotonicity of the pressure.  By
		\eqref{eq:explicit-supercritical}, one can choose $k>0$ sufficiently
		small so that, in addition to $kd<\pi/2$,
		\begin{equation}\label{eq:explicit-k-condition}
			ak>\left(\gamma_0+\frac{g}{a}\right)\tan(kd).
		\end{equation}
		Indeed, after dividing by $k$ and letting $k\to0$, this condition
		reduces to $\mu a>gd$.  From the surface equation, uniformly for
		$x\in\R$,
		\begin{equation*}
			\eta_\varepsilon(x)
			=d-\frac{\varepsilon}{ka}\Theta_k(x,d)
			+O(\varepsilon^2).
		\end{equation*}
		{For brevity, set $C:=\gamma_0+g/a$ in the following calculation. Using \eqref{eq:explicit-theta-derivatives} in \eqref{eq:explicit-pressure}, we obtain}
		\begin{equation}\label{eq:explicit-pressure-derivative}
			\begin{aligned}
				R_\varepsilon'(x)
				={}&\varepsilon\frac{e^{kx}}{{(1+2e^{kx}\cos(kd)+e^{2kx})^2}}
				\bigg[\left\{C\sin(kd)-ak\cos(kd)\right\}(1+e^{2kx})\\
				&\hspace{35mm}
				+2e^{kx}\left\{C\sin(kd)\cos(kd)-ak\right\}\bigg]
				+O\left(\varepsilon^2\frac{e^{kx}}{(1+e^{kx})^2}\right),
			\end{aligned}
		\end{equation}
		{The remainder} in \eqref{eq:explicit-pressure-derivative} is uniform
		in $x$.  Condition \eqref{eq:explicit-k-condition} shows that both
		coefficients in the square bracket are strictly negative.  Since
		$1+2e^{kx}\cos(kd)+e^{2kx}$ is comparable with $(1+e^{kx})^2$, we conclude that
		\begin{equation}\label{eq:explicit-R-sign-epsilon}
			\sgn R_\varepsilon'(x)=-\sgn\varepsilon
		\end{equation}
		for every sufficiently small $\varepsilon\ne0$.
		
		Finally, differentiating \eqref{eq:explicit-downstream-depth} at
		$\varepsilon=0$ gives
		\begin{equation*}
			d_+'(0)=-\frac{d}{a}.
		\end{equation*}
		Consequently,
		\begin{equation*}
			\beta_\varepsilon
			=-\varepsilon\frac{\mu a-gd}{a}+O(\varepsilon^2).
		\end{equation*}
		Thus \eqref{eq:explicit-monotone-pressure} follows from
		\eqref{eq:explicit-supercritical} and
		\eqref{eq:explicit-R-sign-epsilon}.  The exponential convergence of
		$r_\varepsilon$ and its derivatives follows directly from
	 \eqref{eq:explicit-stream} and the implicit function
		theorem.  This proves that $r_\varepsilon$ satisfies
		Assumption~\ref{ass:pressure}.
\end{proof}

\end{document}